\documentclass[a4paper,10pt]{amsart}
\usepackage[english]{babel}
\usepackage[utf8]{inputenc}
\usepackage[T1]{fontenc}

\usepackage[margin=1in]{geometry}
\usepackage{amssymb}
\usepackage{mathrsfs,relsize}
\usepackage{hyperref}
\usepackage[usenames,dvipsnames]{xcolor}
\hypersetup{colorlinks,%
citecolor=OliveGreen,%
filecolor=magenta,%
linkcolor=RoyalBlue,%
urlcolor=cyan}
\usepackage{enumitem}	
\usepackage{array}		
\usepackage{mathtools}	
\usepackage{tikz, float}
\usetikzlibrary{shadings,shapes}

\newcommand{\scr}[1]{\mathscr{#1}}
\newcommand{\frk}[1]{\mathfrak{#1}}
\newcommand{\bb}[1]{\mathbb{#1}}
\newcommand{\cal}[1]{\mathcal{#1}}

\newcommand{\N}{\mathbb{N}}	
\newcommand{\Z}{\mathbb{Z}}	
\newcommand{\R}{\mathbb{R}}	
\newcommand{\Co}{\mathscr{C}}	
\newcommand{\Id}{\mathrm{Id}}	
\newcommand{\Span}{\mathrm{span}}	
\newcommand{\Panel}{\mathrm{Panel}}

\newcommand{\dd}{\,\mathrm{d}}	
\newcommand{\de}{\partial}		
\newcommand{\inv}{^{-1}}
\DeclareMathOperator*{\Li}{\mathtt{Li}}
\DeclareMathOperator*{\Ls}{\mathtt{Ls}}
\newcommand{\Klim}{\operatorname*{K-lim}}	


\newcommand{\into}{\hookrightarrow}		


\newcommand{\IFF}{\Leftrightarrow}	
\newcommand{\ddx}{\framebox[\width]{$\Rightarrow$} }
\newcommand{\ssx}{\framebox[\width]{$\Leftarrow$} }

\newcommand{\HH}{\bb H}

\newcommand{\BU}{\mathtt{BU}}

\DeclarePairedDelimiter{\norm}{\lVert}{\rVert}
\newcommand{\G}{\mathbb{G}}
\newcommand{\vv}{\mathsf{v}}
\newcommand{\flip}{\Theta}

\usepackage{dsfont}
\newcommand{\one}{{\mathds 1\!}}	

\newcommand{\pD}{\text{\scalebox{-1}[1]{$\mathrm{P}$}\!\!$\mathrm{D}$}} 
%
%


\usepackage[normalem]{ulem}

\newcommand{\cvx}{\mathtt{cvx}}

\renewcommand{\G}{\mathbb{G}}
\newcommand{\g}{\mathfrak{g}}

\DeclareMathOperator{\Hom}{Hom} 
\newcommand{\Ball}{B}

\theoremstyle{plain}
\newtheorem{proposition}{Proposition}[section]
\newtheorem{theorem}[proposition]{Theorem}
\newtheorem{lemma}[proposition]{Lemma}

\newtheorem{corollary}[proposition]{Corollary}

\theoremstyle{definition}

\newtheorem{remark}[proposition]{Remark}

\theoremstyle{remark}




\title{
Sub-Finsler horofunction boundaries of the Heisenberg group}
\date{\today}

\author[Fisher]{Nate Fisher}
\address[Fisher]{Department of Mathematics, 503 Boston Ave, Medford, MA}
\email{nathan.fisher@tufts.edu}

\author[Nicolussi Golo]{Sebastiano Nicolussi Golo}
\address[Nicolussi Golo]{Department of Mathematics and Statistics, 40014 University of Jyväskylä, Finland}
%
\email{sebastiano2.72@gmail.com}

\thanks{S.N.G~has been supported
by the University of Padova STARS Project ``Sub-Riemannian Geometry and Geometric Measure Theory Issues: Old and New'';
by the INdAM – GNAMPA Project 2019 ``Rectifiability in Carnot groups'';
and by the Marie Curie Actions-Initial Training Network ``Metric Analysis For Emergent Technologies (MAnET)'' (n.~607643).}

\keywords{Horoboundary, sub-Finsler distance, homogeneous group, Heisenberg group}
\subjclass[2010]{20F69,53C23,53C17}
%
%

\begin{document}

\begin{abstract}
We give a complete analytic and geometric description of the horofunction boundary for polygonal sub-Finsler metrics---that is, those that arise as asymptotic cones of word metrics---on the Heisenberg group.  
We develop theory for the more general case of horofunction boundaries in homogeneous groups by connecting 
horofunctions to Pansu derivatives of the distance function.
\end{abstract}
\maketitle

\setcounter{tocdepth}{2}
\tableofcontents


\section{Introduction}

\subsection{Describing the horofunction boundary} 

The study of boundaries of metric spaces has a rich history and has been fundamental in building bridges between the fields of algebra, topology, geometry, and dynamical systems. Understanding the boundary was essential in the proof of Mostow's rigidity theorem for closed hyperbolic manifolds, and boundaries have also been used to classify isometries of metric spaces, to understand algebraic splittings of groups, and to study the asymptotic behavior of random walks.

The simplest and most classical setting for horofunctions is in the study of isometries of the hyperbolic plane. There, the isometry group splits and induces a geodesic flow and a horocycle flow on the tangent bundle; horocycles, or orbits of the horocycle flow, are level sets of horofunctions. The notion has since been abstracted by Busemann, generalized by Gromov, and used by Rieffel, Karlsson--Ledrappier, and many others to derive results in various fields. The horofunction boundary is obtained by embedding a metric space $X$ into the space of continuous real-valued functions on $X$ via the metric, as we will define below.

In this paper, we develop tools to study the horofunction boundary of homogeneous groups, in particular the real Heisenberg group $\HH$. The horofunction boundary of the Heisenberg group has been the subject of study in several publications. Klein and Nicas described the boundary of $\HH$ for the Kor\'anyi and sub-Riemannian metrics \cite{KN-koranyi, KN-cc}, while several others have studied the boundaries of discrete word metrics in the integer Heisenberg group \cite{walsh-orbits, bader-finkel}. In this paper, we aim to understand the horofunction boundary of the real Heisenberg group $\HH$ for a family of polygonal sub-Finsler metrics which arise as the asymptotic cones of the integer Heisenberg group for different word metrics \cite{pansu-thesis}. 

While horofunction boundaries are not (yet) used as widely as visual boundaries or Poisson boundaries, they admit a theory which is useful across several fields including geometry, analysis, and dynamical systems. Whether it is classifying Busemann function, giving explicit formulas for the horofunctions, describing the topology of the boundary, or studying the action of isometries on the boundary, what it means to understand or to describe a horofunction boundary varies significantly between works.

In this paper, as is done in for the $\ell^\infty$ metric on $\R^n$ in \cite{df-stars}, we hope to combine these analytic, topological, and dynamical descriptions while also introducing a more geometric approach. 
In particular, we want to associate a ``direction'' to every horofunction as well as a geometric condition for a sequence of points to induce a horofunction.
In some settings, the horofunction boundary is made up entirely of limit points induced by geodesic rays---or in other words, every horofunction is a Busemann function. 
It is known that in CAT(0) spaces \cite{BH} as well as in polyhedral normed vector spaces \cite{karlsson-horoballs}, the horofunction boundary is composed only of Busemann functions.
This connection between horofunctions and geodesic rays provides a natural notion of directionality to the horofunction boundary, which is not present in settings of mixed curvature, as described in \cite{MR3673666}.  For the model we develop in homogeneous metrics,
sequences converging to a horofunction can often be dilated back to a well-defined point on the unit sphere, which we can then regard as a direction.  
In these sub-Finsler metrics, there are many directions with no infinite geodesics at all, so this provides one of the motivating senses in which the horofunction boundary is a better choice to capture the geometry and dynamics in nilpotent groups.

\subsection{Outline of paper}

For any homogeneous group, we convert the problem of describing the horofunction boundary to a study of directional derivatives, i.e., Pansu derivatives, of the distance function. It suffices to understand Pansu derivatives on the unit sphere.  Therefore, in any homogeneous group where the unit sphere is understood, our method allows a description of the horofunction boundary.

Pansu-differentiable points on the sphere  (i.e., points $p$ at which distance to the origin has a well defined Pansu derivative) can be thought of as {\em directions} of horofunctions.  
Not all horofunctions are directional; the rest are {\em blow-ups} of non-differentiable points.  
Background on homogeneous groups, Pansu derivatives, and horofunctions is provided in \S\ref{sec:prelim}.
We use Kuratowski limits---a notion of set convergence in a metric space---to define the blow-up of a function in \S\ref{sec:kuratowski}.  

In the remainder of the paper, we focus on the Heisenberg group $\HH$. For sub-Riemannian metrics on $\HH$, Klein--Nicas showed that the horofunction boundary is a topological disk \cite{KN-cc}.  In Theorem~\ref{vertSeq} of \S\ref{sec:disk} we show that an analogous disk belongs to the boundary for the larger class of sub-Finsler metrics, but is a proper subset in many cases.  

Our main theorem (Theorem~\ref{thm:main} in \S\ref{sec:subfinsler}) describes the horoboundary of polygonal sub-Finsler metrics on $\HH$ in terms of blow-ups.
From this, we are able to give explicit expressions for the  horofunctions, to describe the topology of the boundary, and to identify Busemann points.

This description is extremely explicit and allows us to visualize the horofunction boundary and to understand it geometrically.
We get a correspondence between ``directions'' on the sphere and functions in the boundary, as indicated in Figure \ref{duality}. This description allows us to realize the horofunction boundary as a kind of dual to the unit sphere, generalizing previous observations for normed vector spaces and for the sub-Riemannian metric on $\HH$ \cite{ji-schilling, df-stars, karlsson-horoballs, walsh-norm, KN-cc}.

\begin{figure}[h]
\centering
\begin{tikzpicture}

\begin{scope}[yshift=4.8cm]
\draw[ultra thick, orange] (0,0) circle (1.1cm);
\end{scope}

\begin{scope}[xshift=3.5cm,yshift=4.8cm]
\draw[ultra thick, orange] (0,0) circle (1.1cm);
\end{scope}

\begin{scope}[xshift=7.5cm,yshift=4.8cm]
\draw[thick, blue] (1,0)--(1,1)--(0,1)--(-1,0)--(-1,-1)--(0,-1)--cycle;
\foreach \x in {(1,0), (1,1), (0,1), (-1,0), (-1,-1), (0,-1)}
\draw [fill=red, draw = none] \x circle (0.08);
\end{scope}

\begin{scope}[xshift=11cm,yshift=4.8cm]
\draw[thick, red] (1,0)--(0,1)--(-1,1)--(-1,0)--(0,-1)--(1,-1)--cycle;
\foreach \x in {(1,0), (-1,1), (0,1), (-1,0), (1,-1), (0,-1)}
\draw [fill=blue, draw = none] \x circle (0.08);
\end{scope}

\node at (0,.3) {\includegraphics[width=1.5in]{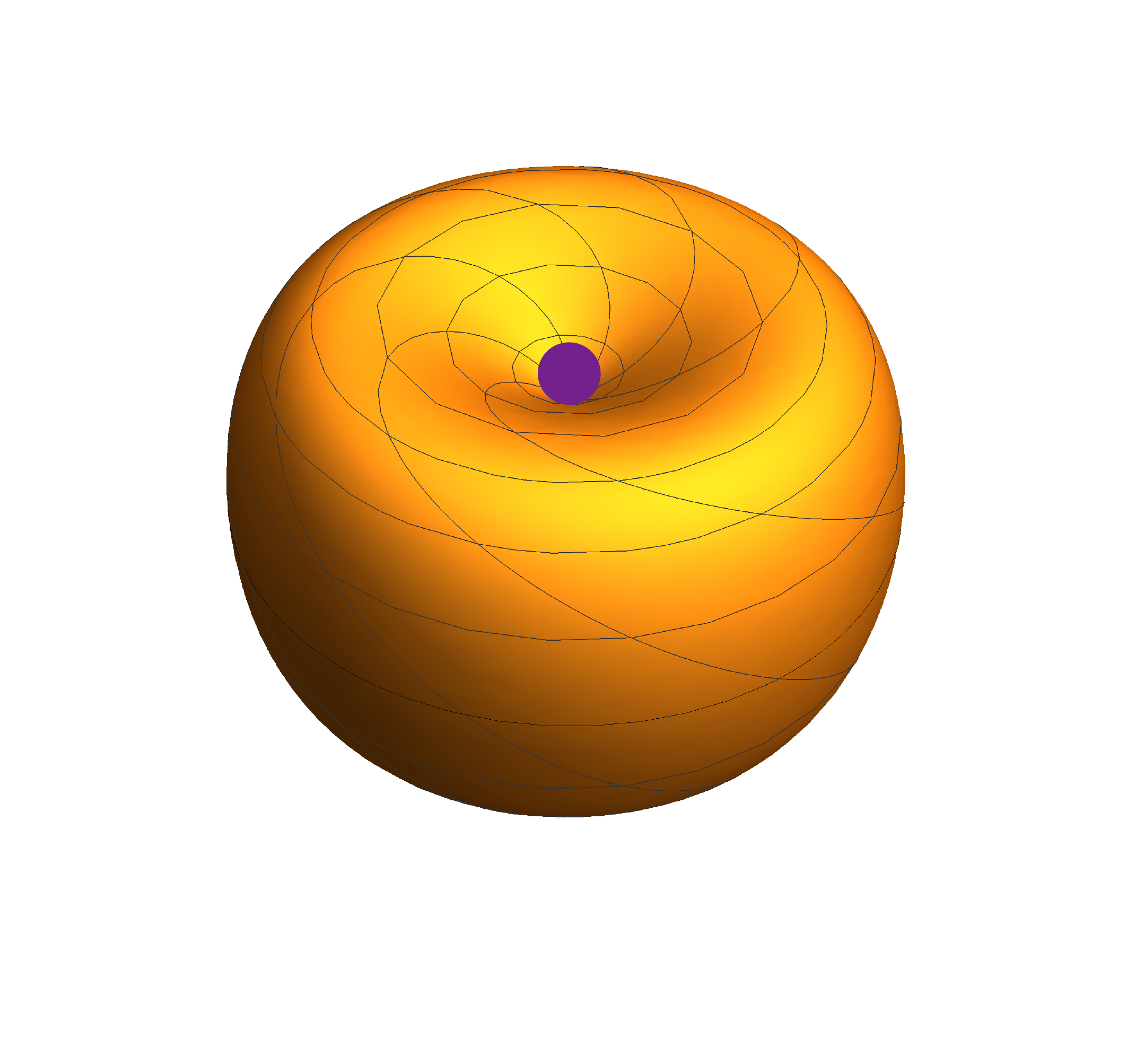}};
\node at (3.5,.5) {\includegraphics[width=1.3in]{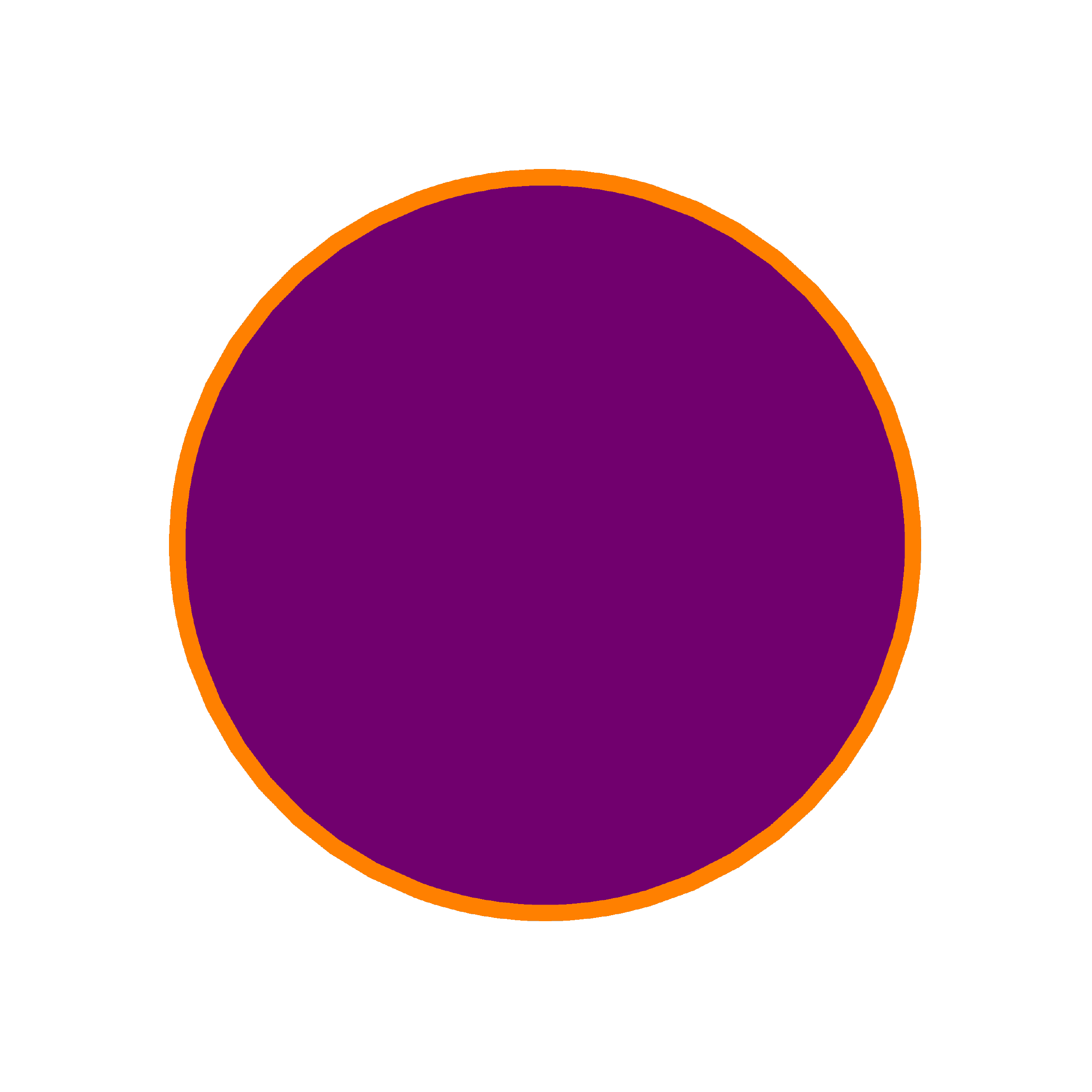}};

\node at (7.5,.3) {\includegraphics[width=1.3in]{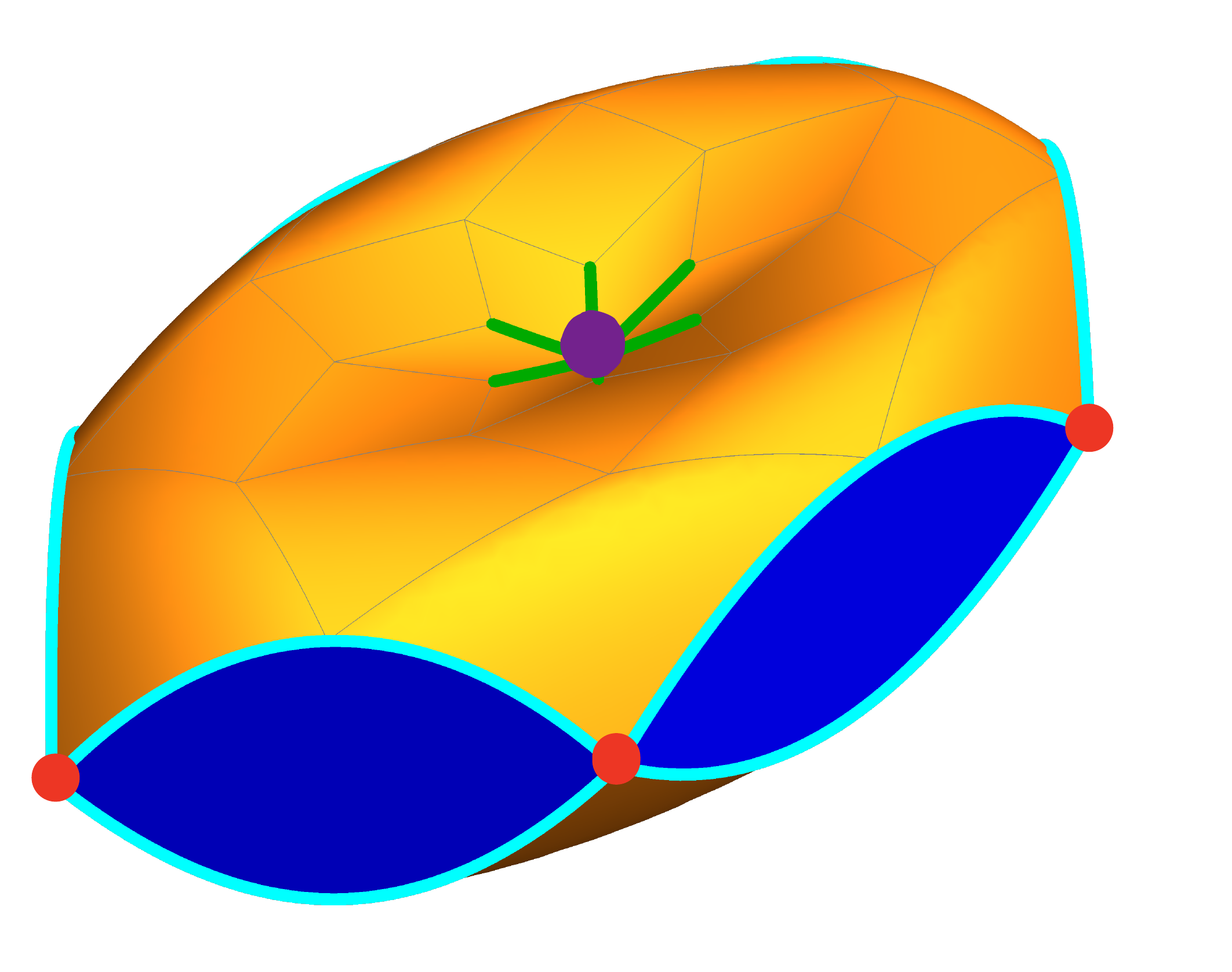}};
\node at  (11,.3) {\includegraphics[width=1.15in]{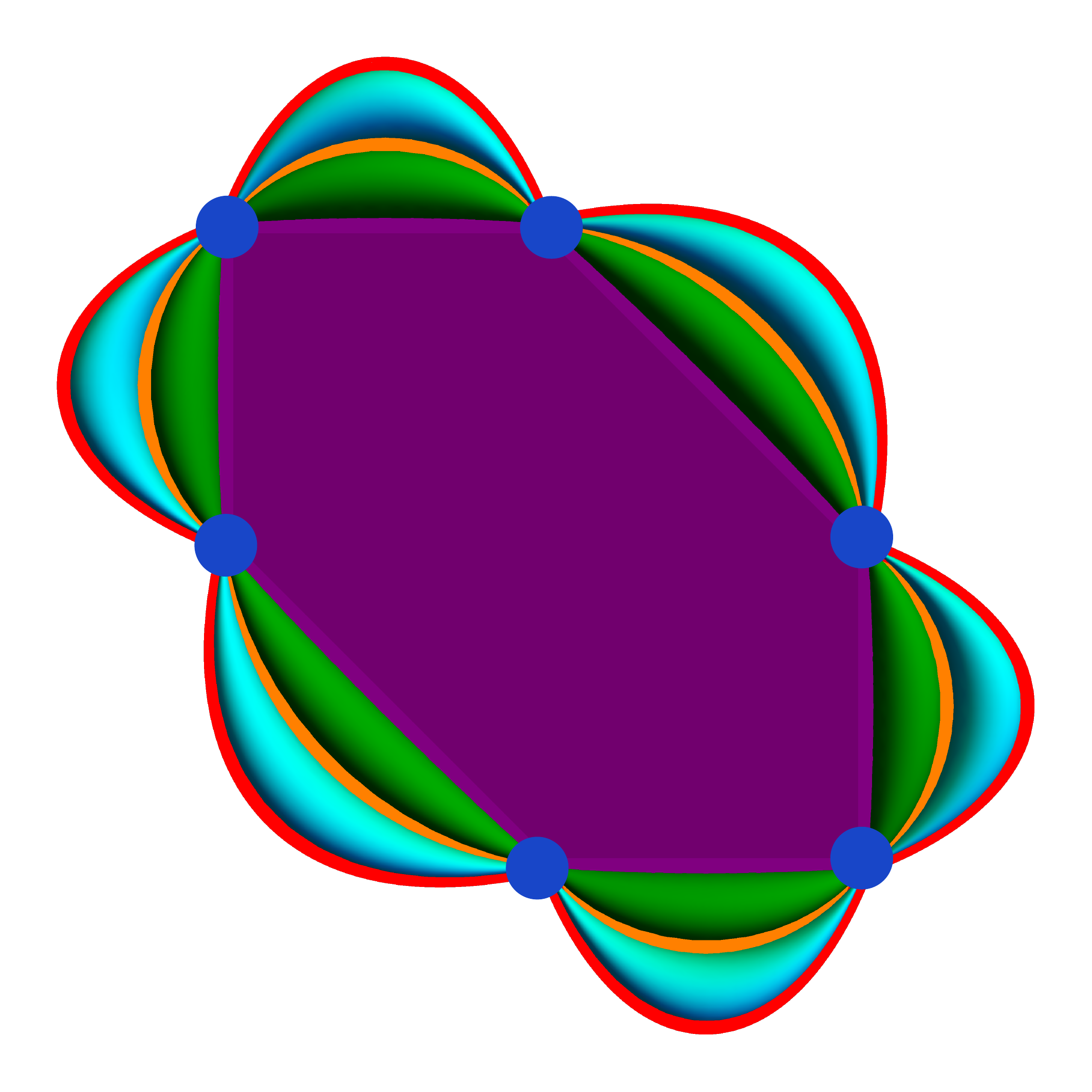}};

\draw (-2,-1.7) rectangle (13, 2.8);
\draw (-2,2.8) rectangle (13, 7.2);
\draw (5.5, -1.7) -- (5.5, 7.2);

\node[draw] at (1.75, 6.8) {$(\R^2, d_{\sf Eucl})$};
\node[draw] at (9.25, 6.8) {$(\R^2, d_{\sf hex})$};
\node[draw] at (1.75, 2.4) {$(\HH, d_{\sf subRiem})$};
\node[draw] at (9.25, 2.4) {$(\HH, d_{\sf subFins})$};

\foreach \x in {(0,-1.3), (0, 3.2), (7.5, -1.3), (7.5, 3.2)}
\node at \x {Sphere};
\foreach \x in {(3.5,-1.3), (3.5, 3.2), (11, -1.3), (11, 3.2)}
\node at \x {Boundary};

\end{tikzpicture}

\caption{The duality between unit spheres and horofunction boundaries for various metric spaces, where colors indicate a correspondence between directions on the spheres and points in the boundary. Note that in both the round and hexagonal cases, the 2D spheres and boundaries embed in the Heisenberg spheres (along the equators) and boundaries.}
\label{duality}
\end{figure}

Finally, using our description of the boundary, we  also study the group action on the boundary in \S\ref{sec:dynamics}, generalizing results of Walsh and Bader--Finkelshtein \cite{walsh-orbits, bader-finkel}.

\subsection*{Acknowledgements} The authors would like to thank Moon Duchin for suggesting the problem and bringing us together to work on this project. We also appreciate the fruitful discussions we have had with Enrico Le Donne, Sunrose Shrestha, and Anders Karlsson.
Finally, we thank Linus Kramer for pointing out to us a common mistake in the definition of horoboundary that we had repeated, see Section~\ref{sec5f907e6b}.

\section{Preliminaries on homogeneous groups and horofunctions}\label{sec:prelim}

We begin with a brief introduction to graded Lie groups, homogeneous metrics, Pansu derivatives, and horofunctions. For a survey on graded Lie groups and homogeneous metrics, we refer the interested reader to \cite{ledonne-primer}.

\subsection{Graded Lie groups}
Let $V$ be a real vector space with finite dimension and  $[\cdot,\cdot]:V\times V\to V$ be the Lie bracket of a Lie algebra $\frk g=(V,[\cdot,\cdot])$.
We say that $\g$ is \emph{graded} if  subspaces $V_1,\dots,V_s$ are fixed so that
$V=V_1\oplus\dots\oplus V_s$ 
and $[V_i,V_j]:=\Span\{[v,w]:v\in V_i,\ w\in V_j\}\subset V_{i+j}$ for all $i,j\in\{1,\dots,s\}$,
where $V_k=\{0\}$ if $k>s$. 
Graded Lie algebras are nilpotent.
A graded Lie algebra is \emph{stratified of step $s$} if equality  $[V_1,V_j]=V_{j+1}$ holds and $V_s\neq\{0\}$.
Our main object of study are stratified Lie algebras, but we will often work with subspaces that are only graded Lie algebras.

On the vector space $V$ we define a group operation via the Baker--Campbell--Hausdorff formula
\begin{align*}
pq &:=
\sum_{n=1}^\infty\frac{(-1)^{n-1}}{n} \sum_{\{s_j+r_j>0:j=1\dots n\}}
	\frac{ [p^{r_1}q^{s_1}p^{r_2}q^{s_2}  \cdots p^{r_n}q^{s_n}] }
	{\sum_{j=1}^n (r_j+s_j) \prod_{i=1}^n r_i!s_i!  } \\
	&=p+q+\frac12[p,q]+\dots ,
\end{align*}
where 
\[
[p^{r_1}q^{s_1}p^{r_2}q^{s_2}  \cdots p^{r_n}q^{s_n}]
= \underbrace{[p,[p,\dots,}_{r_1\text{ times}}
\underbrace{[q,[q,\dots,}_{s_1\text{ times}}
\underbrace{[p,\dots}_{\dots}]\dots]]\dots]] .
\]
The sum in the formula above is  finite  because $\frk g$ is nilpotent.
The resulting Lie group, which we denote by $\G$, is nilpotent and simply connected; we will call it \emph{graded group} or  \emph{stratified group}, depending on the type of grading of the Lie algebra.
The identification $\G=V=\frk g$ corresponds to the identification between Lie algebra and Lie group via the exponential map $\exp:\g\to\G$.
Notice that $p^{-1}=-p$ for every $p\in\G$ and that $0$ is the neutral element of~$\G$.

If $\frk g'$ is another graded Lie algebra with underlying vector space $V'$ and Lie group $\G'$, then, with the same identifications as above, a map $V\to V'$ is a Lie algebra morphism if and only if it is a Lie group morphism, and all such maps are linear.
In particular, we denote by $\Hom_h(\G;\G')$ the space of all \emph{homogeneous morphisms} from $\G$ to $\G'$, that is, all linear maps $V\to V'$ that are Lie algebra morphisms (equivalently, Lie group morphisms) and that map $V_j$ to $V_j'$.
If $\g$ is stratified, then homogeneous morphisms are uniquely determined by their restriction to $V_1$.

For $\lambda>0$, define the \emph{dilations} as the maps $\delta_\lambda:V\to V$ such that $\delta_\lambda v=\lambda^j v$ for $v\in V_j$.
Notice that $\delta_\lambda\delta_\mu=\delta_{\lambda\mu}$
and that $\delta_\lambda\in\Hom_h(\G;\G)$, for all $\lambda,\mu>0$.
Notice also that a Lie group morphism $F:\G\to\G'$ is homogeneous if and only if $F\circ\delta_\lambda = \delta_\lambda'\circ F$ for all $\lambda>0$, where $\delta'_\lambda$ denotes the dilations in $\G'$.
We say that a subset $M$ of $V$ is \emph{homogeneous} if $\delta_\lambda(M)=M$ for all $\lambda>0$.

A \emph{homogeneous distance} on $\G$ is a distance function $d$ that is left-invariant and 1-homogeneous with respect to dilations, i.e., 
\begin{enumerate}
\item[(i)]
$d(gx,gy)=d(x,y)$ for all $g,x,y\in\G$;
\item[(ii)]
$d(\delta_\lambda x,\delta_\lambda y) = \lambda d(x,y)$ for all $x,y\in\G$ and all $\lambda>0$.
\end{enumerate}
When a stratified group $\G$ is endowed with a homogeneous distance $d$, we call the metric Lie group $(\G,d)$ a \emph{Carnot group}.
Homogeneous distances induce the  topology of  $\G$, see \cite[Proposition 2.26]{MR3943324}, and are biLipschitz equivalent to each other.
Every homogeneous distance defines a \emph{homogeneous norm}  $d_e(\cdot): \G \to [0, \infty), d_e(p) = d(e, p)$, where $e$ is the neutral element of $\G$.
We denote by $|\cdot|$ the Euclidean norm in $\R^\ell$. 

\subsection{Pansu derivatives}\label{sec:Pansudifferentiability}
Let $\G$ and $\G'$ be two Carnot groups with homogeneous metrics $d$ and $d'$, respectively, and let $\Omega\subset\G$ open.
A function $f:\Omega\to\G'$ is \emph{Pansu differentiable at $p\in\Omega$} if 
there is $L\in\Hom_h(\G;\G')$ 
such that
\[
\lim_{x\to p}
	\frac{d'( f(p)^{-1}f(x) , L(p^{-1}x) )}{d(p,x)}
= 0 .
\]
The map $L$ is called \emph{Pansu derivative} of $f$ at $p$ and it is denoted by $\pD f(p)$ or $\pD f|_p$.
A map $f:\Omega\to\G'$ is  \emph{of class $C^1_H$}
if $f$ is Pansu differentiable at all points of $\Omega$ and the Pansu derivative $p\mapsto \pD F|_p$ is continuous.
We denote by $C^1_H(\Omega;\G')$ the space of all  maps from $\Omega$ to $\G'$ of class $C^1_H$.

A function $f:\Omega\to\G'$ is \emph{strictly Pansu differentiable at $p\in\Omega$} if 
there is $L\in\Hom_h(\G;\G')$ 
such that
\[
\lim_{\epsilon\to0}\ \sup \left\{
	\frac{d'( f(y)^{-1}f(x) , L(y^{-1}x) )}{d(x,y)}
	: x,y\in\Ball_d(p,\epsilon),\ x\neq y \right\}
= 0,
\]
where $B_d(p, \epsilon)$ is the open $\epsilon$-ball centered at $p$.
Clearly, in this case $f$ is Pansu differentiable at $p$ and $L=\pD F|_p$.
Moreover, as shown in \cite[Proposition 2.4 and Lemma 2.5]{2020arXiv200402520J},
a function $f:\Omega\to\G'$ is of class $C^1_H$ on $\Omega$ if and only if $f$ is strictly Pansu differentiable at all points in $\Omega$.
If $f\in C^1_H(\Omega;\G')$, then $f:(\Omega,d)\to(\G',d')$ is locally Lipschitz.


\subsection{Sub-Finsler metrics}
Let $\G$ be a stratified group and $\|\cdot\|$ a norm on the first layer $V_1 \subset T_e\G$ of the stratification. Using left-translations, we extend the norm $\|\cdot\|$ to the sub-bundle $\Delta\subset T\G$ of left-translates of $V_1$. We call a curve in $\G$ {\em admissible} if it is tangent to $\Delta$ almost everywhere, and using the norm $\|\cdot\|$ we can measure the length of any admissible curve. A classical result tells us that in a stratified group, where $V_1$ bracket-generates the whole Lie algebra, any two points in $\G$ are connected by an admissible curve. We then define a Carnot-Carath\'eodory length metric by 
\[
d(p,q) = \inf_{\gamma} \left\{ \int_a^b \norm{\gamma'(t)}dt\right\},
\]
where the infimum is taken over all admissible $\gamma$ connecting $p$ to $q$.

\begin{proposition}[Eikonal equation]\label{prop5f72ae06}
	If $d$ is a homogeneous distance on $\G$, then $d_e: x \mapsto d(e,x)$ is Pansu differentiable almost everywhere.
	Moreover, if $d$ is sub-Finsler with norm $\|\cdot\|$, then 
	\begin{equation}\label{eq03261818}
	\|\pD d_e |_p\| := 
	\sup\{\vert \pD d_e|_pv\vert : d(e,v)\le 1\}
	=	1
	\qquad\text{for a.e.~}p\in\G .
	\end{equation}
\end{proposition}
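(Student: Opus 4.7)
The plan is to handle the two assertions in order. For the first---that $d_e$ is Pansu differentiable almost everywhere---I would start by observing that $d_e$ is $1$-Lipschitz with respect to $d$: the triangle inequality yields $|d_e(x)-d_e(y)|\le d(x,y)$. Since homogeneous distances induce the manifold topology on $\G$ and are locally bi-Lipschitz equivalent to a Riemannian distance, $d_e$ qualifies as a locally Lipschitz map between Carnot groups, and the Pansu--Rademacher theorem immediately delivers Pansu differentiability a.e.

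For the Eikonal equation, I would fix a Pansu-differentiable point $p\neq e$ and set $L := \pD d_e|_p \in \Hom_h(\G;\R)$. The key structural observation is that $L$ is $1$-homogeneous under dilations, so $|L(x)| \leq \|L\|\cdot d(e,x)$ for every $x \in \G$. The upper bound $\|L\|\le 1$ is then immediate from the $1$-Lipschitz property of $d_e$: for $v$ with $d(e,v)\le 1$ the difference quotient $\epsilon^{-1}(d_e(p\delta_\epsilon v) - d_e(p))$ is bounded in absolute value by $\epsilon^{-1} d(e,\delta_\epsilon v) = 1$, and passing to the limit yields $|L(v)|\le 1$.

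The heart of the argument is the lower bound $\|L\|\ge 1$, which I plan to extract from a unit-speed minimizing geodesic $\gamma:[0,T]\to\G$ from $e$ to $p$, where $T = d_e(p) > 0$. Minimality forces both $d_e(\gamma(s)) = s$ and $d(p,\gamma(s)) = T-s$ for every $s\in[0,T]$. Plugging $x = \gamma(s)$ into the Pansu expansion
\[
d_e(x) - d_e(p) = L(p^{-1}x) + o(d(p,x))
\]
produces $L(p^{-1}\gamma(s)) = -(T-s)(1+o(1))$ as $s \to T^-$, while the homogeneity bound gives $|L(p^{-1}\gamma(s))| \le \|L\|(T-s)$. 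Dividing by $T-s$ and passing to the limit forces $\|L\|\ge 1$, closing the sandwich and yielding $\|L\| = 1$ at every Pansu-differentiable $p\neq e$.

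The main technical point I expect to grapple with is the existence of the unit-speed minimizing geodesic from $e$ to an arbitrary $p\in\G$ in the sub-Finsler setting. This is by now standard for sub-Finsler Carnot groups---one applies the direct method to uniformly length-bounded admissible curves, invoking Ascoli--Arzel\`a compactness and the lower semicontinuity of sub-Finsler length---but it should be cited or verified explicitly for an arbitrary norm $\|\cdot\|$ on $V_1$, since without it the comparison with $\gamma(s)$ never gets off the ground. Given existence, the rest is a short comparison computation.
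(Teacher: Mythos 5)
Your proposal is correct and follows essentially the same route as the paper: Pansu--Rademacher for the a.e.\ differentiability, the $1$-Lipschitz bound for $\|\pD d_e|_p\|\le 1$, and a unit-speed minimizing geodesic from $e$ to $p$ to force $\|\pD d_e|_p\|\ge 1$. The only cosmetic difference is that you exploit the homogeneity inequality $|L(x)|\le\|L\|\,d_e(x)$ along the geodesic, whereas the paper extracts a convergent subsequence of the dilated increments $\delta_{1/|t_n-T|}(\gamma(T)^{-1}\gamma(t_n))$ to produce a unit vector $v$ with $|\pD d_e|_p[v]|=1$; both close the argument in the same way, and the paper likewise takes the existence of minimizing geodesics for granted.
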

\begin{proof}
	Since $d_e$ is 1-Lipschitz, then it is Pansu differentiable almost everywhere by the Pansu--Rademacher Theorem~\cite[Theorem~2]{MR979599} and $\|\pD d_e\| \le 1$.
	To prove~\eqref{eq03261818}, let $p\in\G$ be a point at which $d_e$ is Pansu differentiable, and let $\gamma:[0,T]\to\G$ be a length minimizing curve parametrized by arc-length from $e$ to $p$.
	Since, for $t\neq T$, we have
	\[
	d\left( e , \delta_{1/|t-T|}\left( \gamma(T)^{-1}\gamma(t) \right) \right) = 1 ,
	\]
	then there is a sequence $t_n\to T$ so that $\lim_{n\to\infty}\delta_{1/|t_n-T|}\left( \gamma(T)^{-1}\gamma(t_n) \right) = v$ exists.
	It follows that
	\[
	1 = \lim_{n\to\infty}\frac{|d_e(\gamma(t_n))-d_e(\gamma(T))|}{|t_n-T|}
	= | \pD d_e|_p[v]| ,
	\]
	and we conclude that $\|\pD d_e |_p\|=1$.
\end{proof}


\subsection{The Heisenberg group}\label{sec5f742960}
The Heisenberg group $\HH$ is the simply connected Lie group whose Lie algebra $\mathfrak{h}$ is generated by three vectors $X$, $Y$, and $Z$, with the only nontrivial  Lie bracket $[X,Y] =Z$. 
The stratification is given by  $V_1 = \Span\{X,Y\}$ and $V_2 = \Span\{Z\}$. 
Via the exponential map and the above basis for $\mathfrak{h}$, the Heisenberg group can be coordinatized as $\R^3$ with the following group multiplication:
\[
(x, y, z)(x', y', z') = (x + x', y +y', z + z' + \frac12(xy' - x'y)).
\]
Under this group operation, the generating vectors in the Lie algebra correspond to the left-invariant vector fields
\[
X = \de_x - \frac12 y \de_z, \quad Y = \de_y + \frac12 x \de z, \quad Z = \de _z.
\]
It will sometimes be convenient to coordinatize $\HH$ as $\R^2 \times \R$, in which case the group operation can be written
\[
(v,t)(w,s) = (v + w, t + s + \frac12\omega(v,w)),
\]
where $\omega$ is the standard symplectic form on the plane, $\omega((x,y),(x',y'))=xy'-x'y$.

Denote by $\Delta$ the {\em horizontal distribution}, the sub-bundle (or plane field) generated by the vector fields $X$ and $Y$. 
A curve is admissible if its derivative belongs to $\Delta$.

Let $\pi:\HH\to\R^2$ be the projection of a point to its horizontal components, $\pi(x,y,z) = (x,y)$, which is a group morphism.

Given a path $\gamma: [0,T] \to \R^2$ and an initial height $z_0$, there exists a unique lift to an admissible path $\hat\gamma$ in $\HH$ such that $\hat\gamma$ has height $z_0$ at time zero and $\pi(\hat\gamma) = \gamma$. 
Using Green's theorem and applying an elementary observation, we have that the third component of $\hat\gamma$,
\[
\gamma_3(t) 
= z_0 + \frac12 \int_0^t (\gamma_1\gamma_2' - \gamma_2\gamma_1')(s) \dd s,
\]
is given by the sum of $z_0$ and the balayage area of $\gamma$, i.e., the signed area enclosed by $\gamma$.

Let $d$ be the sub-Finsler metric on $\HH$ induced by a norm $\|\cdot\|$ on $\R^2$ with unit disk $Q$. 
The length in $(\HH,d)$ of an admissible curve $\hat\gamma$ is equal to the length in $(\R^2,\|\cdot\|)$ of the projected curve $\pi(\hat\gamma)$.
A well-known result is that geodesics in sub-Finsler metrics are lifts of solutions to the Dido problem with respect to $\|\cdot\|$; that is, geodesics are lifts of arcs which trace the perimeter of the isoperimetrix $I$ for the given norm.

\subsection{Horoboundary of a metric space}\label{sec5f907e6b}
Let $(X,d)$ be a metric space and $\Co(X)$ the space of continuous functions $X\to\R$ endowed with the topology of the uniform convergence on compact sets.
The map $\iota:X\into\Co(X)$, $(\iota(x))(y):=d(x,y)$, is an embedding, i.e., a homeomorphism onto its image.

Let $\Co(X)/\R$ be the topological quotient of $\Co(X)$ with kernel the constant functions, i.e., for every $f,g\in\Co(X)$ we set the equivalence relation $f\sim g\IFF f-g$ is constant.
For $o\in X$, we set
\[
\Co(X)_{o} := \{f\in\Co(X):f(o)=0\} .
\]
Then the restriction $\Co(X)_{o}\to \Co(X)/\R$ of the quotient map is an isomorphism of topological vector spaces, for each $o\in X$.
Indeed, one easily checks that it is both injective and surjective, and that its inverse map is $[f]\mapsto f-f(o)$, where $[f]\in\Co(X)/\R$ is the class of equivalence of $f\in\Co(X)$, is continuous.

The map $\hat\iota:X\into C(X)\to\Co(X)/\R$ is injective.
Indeed, if $x,x'\in X$ are such that $\iota(x)(z)-\iota(x')(z)$ is constant for all $z \in X$, then taking $z= x$ and then $z=x'$ in turn tells us that $c = d(x, x') = -d(x',x)$. Hence $c = 0$ and $x = x'$.

Moreover, $\hat\iota$ is continuous, but it does not need to be an embedding,
as we learned from~\cite[Proposition 4.5]{kramer}.
In the following lemma, which is a generalization of ~\cite[Remark 4.3]{kramer}, we show that $\hat\iota$ is an embedding under mild conditions on the distance function.

\begin{lemma}\label{lem5f8d4568}
	Let $(X,d)$ be a proper metric space with the following property:
	\begin{equation}\label{eq5f8d3c85}
	\forall p\in X\, \exists 0<r<s\, \forall x\in X\setminus B(p,s)\, \exists z\in B(p,s)\setminus B(p,r)\text{ s.t. }d(x,z)\le d(x,p) .
	\end{equation}
	Then the map $\hat\iota:X\into\Co(X)/\R$ is an embedding.
	
	In particular, any proper metric space with path connected balls satisfy~\eqref{eq5f8d3c85}
	with $r=1$ and $s=2$.
	And so do homogeneous distances on graded groups.
\end{lemma}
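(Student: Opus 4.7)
The plan is to prove the embedding property by showing that $\hat\iota(x_n)\to\hat\iota(x)$ in $\Co(X)/\R$ forces $x_n\to x$ in $X$; combined with the injectivity and continuity of $\hat\iota$ already established, this is enough. Using the topological isomorphism $\Co(X)_x\cong\Co(X)/\R$, the hypothesis translates (after choosing $x$ as the normalization point) into uniform convergence on compact sets of
\[
f_n(y):=d(x_n,y)-d(x_n,x)\ \longrightarrow\ d(x,y).
\]
A standard subsequence argument using properness reduces matters to ruling out the escape scenario $d(x_n,x)\to\infty$: indeed, if a subsequence of $(x_n)$ stays in a bounded set, then by properness a further subsequence converges to some $x'$, and then $\hat\iota(x')=\hat\iota(x)$ by continuity and uniqueness of limits, forcing $x'=x$ by injectivity.

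To exclude $d(x_n,x)\to\infty$, I would invoke~\eqref{eq5f8d3c85} at the basepoint $p=x$, producing radii $0<r<s$ depending on $x$. For all $n$ large enough that $d(x_n,x)>s$, the hypothesis supplies a witness $z_n\in B(x,s)\setminus B(x,r)$ with $f_n(z_n)=d(x_n,z_n)-d(x_n,x)\le 0$. Since $\bar B(x,s)$ is compact, a subsequence $z_{n_k}$ converges to some $z^*\in\bar B(x,s)\setminus B(x,r)$; in particular $d(x,z^*)\ge r>0$. Uniform convergence of $f_n$ on the compact $\bar B(x,s)$ combined with $z_{n_k}\to z^*$ then yields $f_{n_k}(z_{n_k})\to d(x,z^*)$, forcing $d(x,z^*)\le 0$, the desired contradiction.

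For the two ``in particular'' clauses, the main technical point (and, I expect, the only delicate step) is to extract a witness $z$ in the annulus $B(p,2)\setminus B(p,1)$ satisfying $d(x,z)\le d(x,p)$. If balls in $X$ are path connected, then given $x\notin B(p,2)$ I would choose, for each $\epsilon>0$, a path $\gamma_\epsilon$ from $p$ to $x$ inside the slightly enlarged ball $B(x,d(x,p)+\epsilon)$ and apply the intermediate value theorem to $t\mapsto d(p,\gamma_\epsilon(t))$ to extract $z_\epsilon\in\gamma_\epsilon$ with $d(p,z_\epsilon)=3/2$; by properness, a subsequence of $(z_\epsilon)$ converges to $z^*$ with $d(p,z^*)=3/2\in(1,2)$ and $d(x,z^*)\le d(x,p)$. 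For a homogeneous distance on a graded group, the path-connectedness of balls is immediate: the closed ball $\bar B(e,r)$ is star-shaped at $e$ via the dilation curve $t\mapsto\delta_t(q)$, since $d(e,\delta_t q)=t\,d(e,q)\le r$ for $t\in[0,1]$, and left-invariance extends this to every ball, reducing the claim to the path-connected-balls case.
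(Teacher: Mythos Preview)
Your argument is correct and the core use of hypothesis~\eqref{eq5f8d3c85} is the same as the paper's: for $x$ far from $p$, the witness $z$ gives $d(x,z)-d(x,p)\le 0$ while $d(p,z)\ge r>0$, which is the estimate that does all the work. The framing, however, differs. The paper shows directly that $\hat\iota$ maps closed sets to relatively closed sets: given closed $A\subset X$ and $p\notin A$, it exhibits a uniform lower bound $\min\{4\epsilon,r\}$ for $\sup_{\bar B(p,s)}|\hat\iota_p(p)-\hat\iota_p(x)|$ valid for \emph{every} $x\in A$, splitting into the near case $d(p,x)\le s$ (handled by evaluating at $x$ itself) and the far case (handled by the witness $z$). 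Your sequential approach replaces the near case by a properness/compactness extraction, which is equally clean. One point you should make explicit: reducing ``$\hat\iota^{-1}$ continuous'' to ``$\hat\iota(x_n)\to\hat\iota(x)\Rightarrow x_n\to x$'' requires the target $\Co(X)/\R\cong\Co(X)_x$ to be first countable; this holds because $X$ is proper, hence hemicompact, so $\Co(X)$ is metrizable. The paper's closed-set argument sidesteps this technicality.

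For the path-connected-balls clause, the paper simply uses that the \emph{closed} ball $\bar B(x,d(p,x))$ is path connected, joins $x$ to $p$ inside it, and applies the intermediate value theorem to $t\mapsto d(p,\gamma(t))$---no limiting procedure needed. Your $\epsilon$-enlargement and limit extraction is correct and has the mild advantage of working under the weaker hypothesis that only \emph{open} balls are path connected, at the cost of an extra compactness step. The treatment of homogeneous distances via dilation star-shapedness is the same as the paper's.
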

\begin{proof}
	We need to show that $\hat\iota$ maps closed sets to closed subsets of $\hat\iota(X)$.
	Let $A\subset X$ closed and $p\in X\setminus A$: we claim that $\hat\iota(p)\notin\text{cl}(\hat\iota(A))$.
	
	Using the isomorphism $\Co(X)_{p}\to \Co(X)/\R$, we can prove the claim for the map $\hat\iota_p:X\to \Co(X)_{p}$, $\hat\iota_p(x)=d(x,\cdot)-d(x,p)$.
	Let $0<r<s$ as in~\eqref{eq5f8d3c85} for this $p$,
	and let $\epsilon>0$ be such that $B(p,2\epsilon)\cap A=\emptyset$.
	We show that, for every $x\in A$,
	\begin{equation}\label{eq5f8d40e8}
	\sup_{z\in \bar B(p,s)} |\hat\iota_p(p)(z) - \hat\iota_p(x)(z)| \ge \min\{4\epsilon,r\}.
	\end{equation}
	Fix $x\in A$.
	First, if $d(p,x)\le s$, then 
	\[
	\hat\iota_p(p)(x) - \hat\iota_p(x)(x)
	= 2d(p,x) \ge 4\epsilon .
	\]
	Second, if $d(p,x)>s$, then let $z$ as in~\eqref{eq5f8d3c85}, so that
	\[
	\hat\iota_p(p)(z) - \hat\iota_p(x)(z)
	= d(p,z) - (d(x,z)-d(x,p))
	\ge d(p,z)
	\ge r .
	\]
	Thus~\eqref{eq5f8d40e8}.
	We conclude from~\eqref{eq5f8d40e8} and the compactness of $\bar B(p,s)$ that $\hat\iota_p(p)\notin\text{cl}(\hat\iota_p(A))$ and thus
	$\hat\iota(p)\notin\text{cl}(\hat\iota(A))$, where $\text{cl}(\cdot)$ denotes the topological closure.
	Hence, $\hat\iota(A)=\text{cl}(\hat\iota(A)\cap\hat\iota(X))$, i.e., $\hat\iota(A)$ is a closed subset of $\hat\iota(X)$.
	This completes the proof of the first part of the lemma.
	
	For the second part, let $(X,d)$ be a proper metric space with path connected balls.
	Set $r=1$ and $s=2$, let $p\in X$ and $x\in X$ with $d(p,x)\ge s$.
	Since $\bar B(x,d(p,x))$ is path connected, there is a continuous curve $\gamma:[0,1]\to \bar B(x,d(p,x))$ with $\gamma(0)=x$ and $\gamma(1)=p$.
	Since $a(t):=t\mapsto d(p,\gamma(t))$ is continuous, $a(0)\ge s$ and $a(1)=0$, then there is $t_0$ with $d(p,\gamma(t_0))\in [r,s]$.
	We conclude that~\eqref{eq5f8d3c85} holds with $z=\gamma(t_0)$.

	Notice that homogeneous distances on graded groups satisfy the above connectedness condition.
\end{proof}

Define the \emph{horoboundary of $(X,d)$} as
\[
\de_h X := cl(\hat\iota(X)) \setminus \hat\iota(X) \subset\Co(X)/\R ,
\]
where $cl(\hat\iota(X))$ is the topological closure.
Another description of the horoboundary is possible,

as we identify $\de_hX$ with a subset of $\Co(X)_{o}$ for some $o\in X$.
More explicitly: $f\in\Co(X)_{o}$ belongs to $\de_hX$ if and only if there is a sequence $p_n\in X$ such that $p_n\to\infty$ (i.e., for every compact $K\subset X$ there is $N\in\N$ such that $p_n\notin K$ for all $n>N$) and the sequence of functions $f_n\in\Co(X)_{o}$,
\begin{equation}\label{eq03251521}
f_n(x) := d(p_n,x)-d(p_n,o),
\end{equation}
converge uniformly on compact sets to $f$.

If $\gamma: [0,\infty) \to X$ is a geodesic ray, one can check that $\lim_{t\to\infty}\hat\iota(\gamma(t))$ exists, and the geodesic ray converges to a horofunction. Indeed, one can check that for each $x$ in a compact set $K$, $\{d(\gamma(t),x) - d(\gamma(t),\gamma(0))\}$ is non-increasing and bounded below.
These horofunctions which are the limits of geodesic rays, {\em Busemann functions}, have been widely studied and inspired the definition of general horofunctions.

\subsection{Horofunctions and the Pansu derivative}
On homogeneous groups, we observe a fundamental connection between horofunctions and Pansu derivatives of the function $d_e:x\mapsto d(e,x)$.

Let $d$ be a homogeneous metric on $\G$ with unit ball $B$ and unit sphere $\de B$.
Again, we denote by $e$ the neutral element of $\G$ and by $d_e$ the function $x\mapsto d(e,x)$.

\begin{lemma}\label{lem:pansuderiv}
	Let $d$ be a homogeneous metric on $\G$.
	If $f\in\de_h(\G,d)$, then there is a sequence $(p_n,\epsilon_n)\in\de B \times(0,+\infty)$ such that $p_n\to p\in \de B$, $\epsilon_n\to0$ and
	\begin{equation}\label{eq03240944}
	f(x) = \lim_{n\to\infty} \frac{d_e(p_n\delta_{\epsilon_n}x)-d_e(p_n)}{\epsilon_n}, 
	\quad\text{locally uniformly in $x\in\G$.}
	\end{equation}
		
	On the other hand, if $(p_n,\epsilon_n)\in\G\times(0,+\infty)$ such that $p_n\to p\in\de B$, $\epsilon_n\to0$ and $f:\G\to\R$ is the locally uniform limit \eqref{eq03240944}, then $f\in\de_h(\G,d)$.
	
	The horofunction $f$ is limit of the sequence of points
	\begin{equation}\label{eq03240951}
	q_n = \delta_{1/\epsilon_n} p_n^{-1} .
	\end{equation}
	Moreover, if $d_e$ is strictly Pansu differentiable at $p$, then $f=\pD d_e|_p$;
	if $p_n\equiv p$ and $d_e$ is Pansu differentiable at $p$, then $f=\pD d_e|_p$.
\end{lemma}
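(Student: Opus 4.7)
The strategy is to establish a single algebraic identity that directly translates between horofunction-style differences and the Pansu-derivative-style difference quotients appearing in~\eqref{eq03240944}, and then extract every part of the lemma from it. With $q_n := \delta_{1/\epsilon_n} p_n^{-1}$, the homomorphism property of dilations gives $q_n^{-1} = \delta_{1/\epsilon_n}(p_n)$ and hence $q_n^{-1} x = \delta_{1/\epsilon_n}(p_n \delta_{\epsilon_n} x)$; combining left-invariance of $d$ with 1-homogeneity of $d_e$ yields $d(q_n, x) = \epsilon_n^{-1} d_e(p_n \delta_{\epsilon_n} x)$, and the case $x = e$ (using $d_e(p_n^{-1}) = d_e(p_n)$) gives $d(q_n, e) = \epsilon_n^{-1} d_e(p_n)$. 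Subtracting recovers exactly the right-hand side of~\eqref{eq03240944}. This identity is the backbone of the entire proof.

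The two implications are now dual to each other. For the forward direction, pick any sequence $q_n \to \infty$ with $d(q_n, \cdot) - d(q_n, e) \to f$ locally uniformly, and set $\epsilon_n := 1/d_e(q_n) \to 0$ and $p_n := \delta_{\epsilon_n}(q_n^{-1})$; one checks $d_e(p_n) = 1$ and $q_n = \delta_{1/\epsilon_n} p_n^{-1}$. Since homogeneous distances on graded groups are proper, $\de B$ is compact, so after passing to a subsequence $p_n \to p \in \de B$, and the identity produces~\eqref{eq03240944}. Conversely, given $(p_n, \epsilon_n)$ as in the hypothesis, the $q_n$ defined by~\eqref{eq03240951} satisfy $d(q_n, e) = d_e(p_n)/\epsilon_n \to \infty$ because $d_e(p_n) \to 1$; properness forces $q_n \to \infty$, so the identity realizes the locally uniform limit $f$ as the horofunction limit of the sequence $q_n$.

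For the Pansu-derivative conclusions, set $L := \pD d_e|_p$ and apply strict Pansu differentiability at $p$ with the test pair $y = p_n$, $x = p_n \delta_{\epsilon_n} w$ for $w$ ranging over a compact subset of $\G$. Then $y^{-1} x = \delta_{\epsilon_n} w$ and $d(y, x) = \epsilon_n d_e(w)$, so for large $n$ both points lie in any prescribed ball around $p$; combined with the identity $L(\delta_{\epsilon_n} w) = \epsilon_n L(w)$ valid for $L \in \Hom_h(\G;\R)$, the strict-differentiability estimate gives $[d_e(p_n \delta_{\epsilon_n} w) - d_e(p_n)]/\epsilon_n \to L(w)$ locally uniformly in $w$, whence $f = L$. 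When $p_n \equiv p$, the test point $y := p \delta_{\epsilon_n} w$ still converges to $p$ uniformly for $w$ in a compact set avoiding $e$, so ordinary (non-strict) Pansu differentiability at $p$ already suffices to run the same argument.

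The main obstacle is conceptual rather than computational: recognizing that the normalization $\epsilon_n = 1/d_e(q_n)$ rescales an escaping sequence into a sequence on the compact sphere $\de B$, thereby reinterpreting the horofunction limit as a Pansu-type directional derivative at a boundary direction $p \in \de B$. Once this identification is in place, the homomorphism property of dilations, left-invariance, and 1-homogeneity of $d_e$ turn every remaining step into a one-line manipulation.
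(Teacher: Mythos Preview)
Your proof is correct and follows essentially the same approach as the paper: both hinge on the identity $d(q_n,x)-d(q_n,e)=\epsilon_n^{-1}\bigl(d_e(p_n\delta_{\epsilon_n}x)-d_e(p_n)\bigr)$ together with the normalization $\epsilon_n=d_e(q_n)^{-1}$, $p_n=\delta_{\epsilon_n}q_n^{-1}\in\de B$. Your write-up is in fact more complete than the paper's, which leaves the Pansu-derivative conclusions (and the verification that $q_n\to\infty$ in the converse direction) entirely to the reader; your treatment of those points via the strict-differentiability estimate applied to the pair $(p_n,\,p_n\delta_{\epsilon_n}w)$ is the natural way to fill that gap.
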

\begin{proof}
	A simple computation shows that,
	if $p_n,q_n\in\G$ and $\epsilon_n\in(0,+\infty)$ satisfy~\eqref{eq03240951}, then 
	\[
	d(q_n,x) - d(q_n,e)
	= \frac{d_e(p_n\delta_{\epsilon_n}x)-d_e(p_n)}{\epsilon_n} .
	\]
	Therefore, if $q_n\to f\in \de_h(\G,d)$, then we take $\epsilon_n:=d(e,q_n)^{-1}$, which converges to $0$, and $p_n=\delta_{\epsilon_n}q_n^{-1} \in \de B$.
	Then \eqref{eq03240944} holds and, up to passing to a subsequence, $p_n$ converges to a point $p\in \de B$. 
	
	The opposite direction is also clear.
\end{proof}

\subsection{Horofunctions on vertical fibers}

From the basic ingredients above, we can deduce that all horofunctions are constant on vertical fibers, when a Lipschitz property holds for $d_e:x \mapsto d(e,x)$.

Notice that, by \cite[Proposition 3.3 and Theorem A.1]{MR3739202}, the Lipschitz property~\ref{eq03241020} is satisfied for all homogeneous distances on $\G$, whenever $\g$ is strongly bracket generating, that is, the stratification $V_1\oplus V_2$ of $\g$ is such that, for every $v\in V_1\setminus\{0\}$, $[v,V_1]=V_2$.
The Heisenberg group $\HH$ is an example of such groups.

\begin{proposition}[Vertical invariance of horofunctions]\label{prop5f749836}
	Suppose that $\G$ is a Carnot group and $d$ a homogeneous distance satisfying 
	\begin{equation}\label{eq03241020}
	\text{there is $L>0$ such that }
	|d_e(x)-d_e(y)| \le L \rho(x,y)
	\text{ for all }
	x,y\in B(e,2)\setminus B(e,1/2) ,
	\end{equation}
	for some Riemannian distance $\rho$ on $\G$.
	Then, horofunctions of $(\G,d)$ are constant along the cosets of the center $[\G,\G]$.
	In particular, for every $f\in\de_h(\G,d)$ there is 
	$\hat f\in C(\G/[\G,\G])$ such that $f=\hat f\circ\pi$.
\end{proposition}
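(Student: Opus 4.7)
My strategy is to realize every horofunction as a rescaled limit of $d_e$ via Lemma~\ref{lem:pansuderiv}, and then exploit the fact that the dilations $\delta_{\epsilon_n}$ contract any element of $[\G,\G]=V_2\oplus\cdots\oplus V_s$ super-linearly in $\epsilon_n$ (at worst like $\epsilon_n^2$), while the normalization in the horofunction formula is only linear in $\epsilon_n$. The Lipschitz hypothesis~\eqref{eq03241020} will then turn this size gap into a vanishing contribution.

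Concretely, I would fix $f\in\de_h(\G,d)$ and apply Lemma~\ref{lem:pansuderiv} to select $p_n\in\de B$ and $\epsilon_n\to 0^+$ with
\[
f(x) = \lim_{n\to\infty}\frac{d_e(p_n\,\delta_{\epsilon_n}(x))-d_e(p_n)}{\epsilon_n},
\]
locally uniformly in $x\in\G$. Given $x\in\G$ and $z\in[\G,\G]$, I use that $\delta_{\epsilon_n}$ is a Lie group morphism to rewrite
\[
p_n\,\delta_{\epsilon_n}(xz) \;=\; \bigl(p_n\,\delta_{\epsilon_n}(x)\bigr)\cdot\delta_{\epsilon_n}(z),
\]
so that the numerators defining $f(xz)$ and $f(x)$ differ by $d_e$ evaluated at two points related by right multiplication by the small element $\delta_{\epsilon_n}(z)$.

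The next step is to quantify the smallness: decomposing $z=\sum_{j\ge 2}z_j$ with $z_j\in V_j$ gives $\delta_{\epsilon_n}(z)=\sum_{j\ge 2}\epsilon_n^j z_j$, whose Euclidean norm is $O(\epsilon_n^2)$. By the triangle inequality for $d$, for all sufficiently large $n$ both $p_n\delta_{\epsilon_n}(x)$ and $p_n\delta_{\epsilon_n}(xz)$ lie in $B(e,2)\setminus B(e,1/2)$, so~\eqref{eq03241020} yields
\[
\bigl|d_e(p_n\delta_{\epsilon_n}(xz))-d_e(p_n\delta_{\epsilon_n}(x))\bigr|
\;\le\; L\,\rho\bigl(p_n\delta_{\epsilon_n}(x),\,p_n\delta_{\epsilon_n}(x)\cdot\delta_{\epsilon_n}(z)\bigr).
\]
Because $\rho$ is a Riemannian distance, it is locally comparable to any Euclidean coordinate metric on bounded sets, and right multiplication in $\G$ is smooth; hence the right-hand side is $O(\epsilon_n^2)$. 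Dividing by $\epsilon_n$ and letting $n\to\infty$ forces $f(xz)=f(x)$.

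For the ``in particular'' statement, $[\G,\G]$ is a closed normal subgroup of $\G$, so the projection $\pi\colon\G\to\G/[\G,\G]$ is a continuous open surjection; since $f$ is continuous and constant on the fibers of $\pi$, it descends through the quotient to a continuous function $\hat f\in C(\G/[\G,\G])$ with $f=\hat f\circ\pi$. The only point requiring care is the uniformity of the Lipschitz shell condition along the sequence, but this is automatic once $\epsilon_n$ is small enough compared to $d_e(x)$ and the coordinates of $z$, so I do not expect a serious obstacle.
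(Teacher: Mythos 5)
Your proposal is correct and follows essentially the same route as the paper: represent $f$ via Lemma~\ref{lem:pansuderiv}, note that the terms $d_e(p_n)$ cancel so the difference $f(xz)-f(x)$ is controlled by $|d_e(p_n\delta_{\epsilon_n}(xz))-d_e(p_n\delta_{\epsilon_n}x)|$, apply the Lipschitz shell condition~\eqref{eq03241020} (both arguments eventually lie in the annulus since their $d_e$-values tend to $1$), and observe that the displacement $\delta_{\epsilon_n}z$ with $z\in[\G,\G]$ is $o(\epsilon_n)$. The only (harmless) difference is that where the paper passes to a left-invariant Riemannian metric and cites the Ball--Box theorem to get $\rho(e,\delta_{\epsilon}\zeta)=o(\epsilon)$, you obtain the same estimate directly from $\delta_{\epsilon}z=\sum_{j\ge 2}\epsilon^{j}z_j$ together with smoothness of right multiplication and local comparability of $\rho$ with the Euclidean distance.
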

\begin{proof}
	Let $\rho$ be a left-invariant Riemannian metric on $\G$.
	Recall that, by the Ball-Box Theorem~\cite{mitchell-thesis,mitchell-cc, gershkovich,NSW}, if $\zeta\in[\G,\G]$ then $\lim_{\epsilon\to0^+}\frac{\rho(e,\delta_\epsilon\zeta)}{\epsilon}=0$.
	Now, fix $f\in\de_h(\G,d)$, and let $p_n\in\de B$ and $\epsilon_n\to0$ as in Lemma~\ref{lem:pansuderiv}.
	Then, for every $\zeta\in[\G,\G]$ and $x\in\G$, 
	\begin{align*}
	f(x\zeta)-f(x) 
	&= \lim_{n\to\infty} \frac{d_e(p_n\delta_{\epsilon_n}(x\zeta)) - d_e(p_n\delta_{\epsilon_n}x)}{\epsilon_n} \\
	&= \lim_{n\to\infty} \frac{d((p_n\delta_{\epsilon_n}x)^{-1},\delta_{\epsilon_n}\zeta) - d((p_n\delta_{\epsilon_n}x)^{-1},e)}{\epsilon_n}\\
	&\le L \limsup_{n\to\infty} \frac{\rho(e,\delta_{\epsilon_n}\zeta)}{\epsilon_n}
	=0 .\qedhere
	\end{align*}
\end{proof}

\begin{remark}
	We give an example where horofunctions are not constant along the center.
	Endow the stratified group $\G=\HH\times\R$ with a homogeneous distance of the form
	\[
	d((0,0,0;0) , (x,y,z;t) ) = |x|+|y|+c\sqrt{|z|}+|t|
	\]
	with $c>0$ chosen so that $d$ satisfies the triangular inequality.
	Using the notation of the above proof, take
	\begin{align*}
	p_n &= (0,0,1/n;1-c/\sqrt{n}), &
	x &= 0, &
	\zeta &= (0,0,1;0) , &
	\epsilon_n &= 1/\sqrt{n} .
	\end{align*}
	Then, $d(e,p_n)=1$ for all $n$ and
	\[
	\frac{d_e(p_n\delta_{\epsilon_n}(x\zeta)) - d_e(p_n\delta_{\epsilon_n}x)}{\epsilon_n}
	= c (\sqrt2-1) \neq 0
	\]
	for all $n$.
	Finally, a subsequence of $q_n=\delta_{1/\epsilon_n}p_n$ converges to a horofunction $f$ which satisfies $f(\zeta)-f(0)\neq0$, i.e., it is not constant along $[\G,\G]$.
\end{remark}


\section{Blow-ups of sets and functions in homogeneous groups}\label{sec:kuratowski}

As we observed in Lemma \ref{lem:pansuderiv}, in homogeneous groups there is a connection between horofunctions in the boundary and directional derivatives along the unit sphere. Wherever the unit sphere is smooth, this directional derivative is the Pansu derivative. While the unit sphere is Pansu differentiable almost everywhere, the nonsmooth points must be studied using a different strategy. In this section, we overview the Kuratowski convergence of closed sets, sometimes credited to Kuratowski--Painlev\'e, and we use it define the blow-up of functions.

\subsection{Kuratowski limits in metric spaces}
\newcommand{\CL}{\mathtt{CL}}
Let $(X,d)$ be a locally compact metric space and let $\CL(X)$ be the family of all closed subsets of $X$.
If $x\in X$ and $C\subset X$, we set $d(x,C):=\inf\{d(x,y):y\in C\}$.
The {\em Kuratowski limit inferior} of a sequence $\{C_n\}_{n\in\N}\subset\CL(X)$ is defined to be
\begin{align*}
\Li_{n\to \infty} C_n 
&:= \left\{ q \in X : \limsup_{n\to \infty} d(q, C_n) = 0 \right\} \\
&= \left\{q\in X: \forall n\in\N\;\exists x_n\in C_n\text{ s.t.~}\lim_{n\to\infty}x_n=q \right\},
\end{align*}
while the {\em Kuratowski limit superior} is defined to be
\begin{align*}
\Ls_{n\to \infty} C_n 
&:= \left\{ q \in X : \liminf_{n\to \infty} d(q, C_n) = 0\right\} \\
&= \left\{q\in X: \exists N\subset\N\text{ infinite }\forall k\in N\;\exists x_{k}\in C_{k}\text{ s.t.~}\lim_{k\to\infty} x_{k}=q \right\},
\end{align*}
It is clear that  $\Li_n C_n \subseteq \Ls_n C_n$ and that they are both closed.

If $\Li C_n = \Ls C_n = C$, then we say that the $C$ is the {\em Kuratowski limit} of $\{C_n\}_n$ and we write
\[
C=\Klim_{n\to\infty} C_n .
\]

If, for all $n\in\N$, $\Omega_n\subset X$ are closed sets and $f_n:\Omega_n\to\R$ continuous functions, then we say that, for some $\Omega\subset X$ closed and $f:\Omega\to\R$ continuous,
\[
\Klim_{n\to\infty}(\Omega_n,f_n) = (\Omega,f)
\]
if $\Omega=\Klim_n\Omega_n$ and if, for every $x\in\Omega$ and every sequence $\{x_n\}_{n\in\N}$ with $x_n\in\Omega_n$ and $x_n\to x$, we have $f(x)=\lim_nf_n(x_n)$.
Notice that this is equivalent to say that 
\[
\Klim_{n\to\infty} \{(x,f_n(x)):x\in\Omega_n\} = \{(x,f(x)):x\in\Omega\} .
\]

If $C^1_n,\dots,C^J_n$ are sequences of closed sets,

then one easily checks that 
\[
\Ls_{n\to\infty} \bigcup_{j=1}^J C^j_n 
\subset \bigcup_{j=1}^J \Ls_{n\to\infty}  C^j_n ,
\quad\text{and}\quad
\bigcup_{j=1}^J \Li_{n\to\infty}  C^j_n 
\subset \Li_{n\to\infty} \bigcup_{j=1}^J   C^j_n .
\]
Therefore, if the limit $\Klim_{n\to\infty} C^j_n$ exists for each $j$, then we have
\begin{equation}\label{eq03271215}
\Klim_{n\to\infty} \bigcup_{j=1}^JC^j_n
= \bigcup_{j=1}^J\Klim_{n\to\infty} C^j_n .
\end{equation}

It is a classical result of Zarankiewicz that under mild conditions, $\CL(X)$ is sequentially compact with respect to Kuratowski convergence.

\begin{theorem}[Zarankiewicz \cite{zarankiewicz}]\label{thm5f74be88}
	If $(X,d)$ is a separable metric space, then the family of closed sets is sequentially compact with respect to the Kuratowski convergence, that is, if $\{C_n\}_{n\in\N}$ is a sequence of closed sets, then there is $N\subset\N$ infinite and $C\subset X$ closed such that $\Klim_{N\ni n\to\infty} C_n = C$.
\end{theorem}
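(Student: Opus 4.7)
My plan is to prove this via a diagonal extraction using a countable base for the topology, reducing the topological convergence of closed sets to the eventual stabilization of countably many Boolean "intersection" indicators.

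First, since $(X,d)$ is separable and metrizable, it is second countable: fix a countable base $\{U_k\}_{k\in\N}$ for the topology of $X$. To a closed set $C\subset X$ and $k\in\N$ associate the indicator
\[
a^k(C) := \begin{cases} 1 & \text{if } C\cap U_k\neq\emptyset, \\ 0 & \text{otherwise.}\end{cases}
\]
Given an arbitrary sequence $\{C_n\}_{n\in\N}$ of closed sets, the double sequence $(a^k(C_n))_{n,k}$ lives in $\{0,1\}$. A standard diagonal argument then produces an infinite $N\subset\N$ such that for every $k$ the sequence $n\mapsto a^k(C_n)$, $n\in N$, is eventually constant: enumerate the base, extract a subsequence $N_1\supset N_2\supset\cdots$ stabilizing $a^k(C_n)$ on the $k$th stage, and take the diagonal. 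Write $n\to\infty$ through $N$ in what follows.

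Next I would show that the resulting subsequence is Kuratowski convergent. Define
\[
C := \bigl\{q\in X : \text{for every $k$ with $q\in U_k$, $a^k(C_n)=1$ eventually along $N$}\bigr\}.
\]
To check $C\supset\Ls_{N\ni n\to\infty}C_n$: if $q\in\Ls C_n$ and $q\in U_k$, then $C_n\cap U_k\neq\emptyset$ for infinitely many $n\in N$, so $a^k(C_n)$ cannot be eventually $0$ and must be eventually $1$. Conversely, $C\subset\Li_{N\ni n\to\infty}C_n$: for $q\in C$ and any open $V\ni q$, choose a basic $U_k$ with $q\in U_k\subset V$; then $a^k(C_n)=1$ eventually, so picking $x_n\in C_n\cap U_k$ for large $n\in N$ (and any $x_n\in C_n$ otherwise) yields a sequence in $\CL$ with a subsequence converging to $q$ through arbitrarily small basic neighborhoods. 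A short refinement using a countable nested local base at $q$ produces an actual sequence $x_n\in C_n$ with $x_n\to q$, showing $q\in\Li_{N\ni n\to\infty}C_n$. Since $\Li\subset\Ls$ always, all three sets coincide, and $C$ is closed (being a $\Li$ or $\Ls$ of closed sets).

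I expect the only slightly delicate step to be the passage from "basic neighborhoods $U_k\ni q$ are eventually hit by $C_n$" to "for every open $V\ni q$, $C_n\cap V\neq\emptyset$ for all sufficiently large $n\in N$", that is, producing a genuine sequence $x_n\in C_n$ with $x_n\to q$ rather than just one along a subsequence. This is handled by fixing a decreasing countable local base $q\in U_{k_1}\supset U_{k_2}\supset\cdots$ with $\mathrm{diam}(U_{k_j})\to 0$ (available since $X$ is metric and second countable) and choosing $x_n\in C_n\cap U_{k_{j(n)}}$ for a sufficiently slowly growing index $j(n)\to\infty$; this is possible precisely because each $a^{k_j}(C_n)$ is eventually $1$. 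All other steps are routine set-theoretic bookkeeping.
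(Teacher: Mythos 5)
Your proof is correct, and it is essentially the classical argument behind the result: the paper itself gives no proof, citing Zarankiewicz, and the standard proof of sequential compactness of Kuratowski--Painlev\'e convergence on a second countable space is exactly your scheme --- fix a countable base, diagonalize so that the hitting indicators $a^k(C_n)$ stabilize along a subsequence $N$, and identify the limit set. Both inclusions are argued soundly: eventual stabilization plus $q\in\Ls_{N\ni n\to\infty}C_n$ forces $a^k(C_n)=1$ eventually for every basic $U_k\ni q$, and conversely membership in your set $C$ gives $d(q,C_n)\to 0$ along $N$, since for every $\epsilon>0$ some basic $U_k$ satisfies $q\in U_k\subset B(q,\epsilon)$. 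Two small remarks. First, the ``slowly growing index'' refinement you flag as delicate is not needed if you use the paper's metric characterization $\Li_{n\to\infty}C_n=\{q:\limsup_n d(q,C_n)=0\}$: the estimate $d(q,C_n)<\epsilon$ eventually, for every $\epsilon$, already puts $q$ in $\Li_{N\ni n\to\infty}C_n$ (and if you do want explicit points $x_n\in C_n$, your nested-local-base construction works). Second, the degenerate case where $C_n=\emptyset$ for infinitely many $n\in N$ is handled automatically by your definition of $C$ (all indicators are eventually $0$, so $C=\emptyset$, which is a legitimate closed Kuratowski limit); it is worth a sentence but is not a gap. The set $C$ you define is closed because it coincides with $\Li_{N\ni n\to\infty}C_n$, which the paper notes is always closed, so the conclusion $\Klim_{N\ni n\to\infty}C_n=C$ follows.
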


For $\epsilon\ge0$ and $C\subset X$, let 
\[
\cal N_\epsilon(C):=\{x:d(x,C)\le\epsilon\},
\text{ and }
\cal N_{-\epsilon}(C) :=\{x:d(x,X\setminus C)>\epsilon\} .
\]
Notice that $\cal N_{-\epsilon}(C) = X\setminus \cal N_\epsilon(X\setminus C)$. 

A set $C\subset X$ is a \emph{regular closed set} if it is the closure of its interior.
If $C$ is a closed set, then $\overline{X\setminus C}$ is regular closed.
If $C$ is a regular closed set, then 
\[
C = \bigcap_{\epsilon>0} \cal N_\epsilon(C)
= \overline{\bigcup_{\epsilon>0} \cal N_{-\epsilon}(C)}
\]

\begin{lemma}\label{lem04031714}
	Assume $X$ to be locally compact.
	Let $f_n:X\to\R$ be a sequence of continuous functions locally uniformly converging to $f_\infty:X\to\R$.
	Then
	\begin{equation}\label{eq04031645}
	\{f_\infty<0\}
	\subset \Li_{n\to\infty}\{f_n\le 0\}
	\subset \Ls_{n\to\infty}\{f_n\le 0\}
	\subset \{f_\infty\le 0\} .
	\end{equation}
	In particular, if $\overline{\{f_\infty<0\}}=\{f_\infty\le 0\}$, then
	\[
	\Klim_{n\to\infty} \{f_n\le 0\} = \{f_\infty\le 0\} .
	\]
\end{lemma}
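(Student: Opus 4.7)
My plan is to prove the three inclusions separately, then deduce the ``in particular'' statement by sandwiching.

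\textbf{Middle inclusion.} The inclusion $\Li_n \{f_n \le 0\} \subset \Ls_n \{f_n \le 0\}$ is immediate from the definitions of Kuratowski limit inferior and superior.

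\textbf{First inclusion.} Given $q$ with $f_\infty(q)<0$, set $\eta := -f_\infty(q)/2 > 0$. By local uniform convergence (specialized to the single point $q$, or a tiny compact neighborhood), there is $N$ such that $|f_n(q)-f_\infty(q)|<\eta$ for all $n\ge N$, hence $f_n(q)\le f_\infty(q)+\eta = -\eta<0$. Thus $q\in\{f_n\le 0\}$ for all large $n$, so $d(q,\{f_n\le 0\})=0$ eventually, and $q\in\Li_n\{f_n\le 0\}$.

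\textbf{Third inclusion.} Let $q\in\Ls_n\{f_n\le 0\}$. By the sequential characterization, there is an infinite $N\subset\N$ and points $x_k\in\{f_k\le 0\}$ for $k\in N$ with $x_k\to q$. Fix a compact neighborhood $K$ of $q$ (using local compactness of $X$); for $k$ large enough, $x_k\in K$. Since $f_n\to f_\infty$ uniformly on $K$ and $f_\infty$ is continuous, the standard estimate
\[
|f_k(x_k)-f_\infty(q)| \le \sup_{x\in K}|f_k(x)-f_\infty(x)| + |f_\infty(x_k)-f_\infty(q)|
\]
shows that $f_k(x_k)\to f_\infty(q)$. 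Since $f_k(x_k)\le 0$ for $k\in N$, passing to the limit yields $f_\infty(q)\le 0$.

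\textbf{The ``in particular'' clause.} Assume $\overline{\{f_\infty<0\}}=\{f_\infty\le 0\}$. Because $\Li_n$ of any sequence of sets is closed, the first inclusion upgrades by taking closures to
\[
\{f_\infty\le 0\}=\overline{\{f_\infty<0\}}\subset\Li_n\{f_n\le 0\}.
\]
Combined with $\Ls_n\{f_n\le 0\}\subset\{f_\infty\le 0\}$ from the third inclusion, we conclude $\Li_n\{f_n\le 0\}=\Ls_n\{f_n\le 0\}=\{f_\infty\le 0\}$, which is exactly the assertion that the Kuratowski limit equals $\{f_\infty\le 0\}$.

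The only slightly subtle point is the third inclusion, where I need local uniform convergence (not merely pointwise) together with continuity of $f_\infty$ to pass the inequality $f_k(x_k)\le 0$ to the limit along a \emph{moving} sequence $x_k\to q$; this is precisely where local compactness of $X$ is used to produce the compact neighborhood $K$ on which the convergence is uniform. Everything else is a direct unwinding of definitions.
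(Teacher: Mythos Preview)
Your proof is correct and follows essentially the same approach as the paper's: show $q\in\{f_n\le 0\}$ eventually for the first inclusion, pass the inequality through a moving sequence $x_k\to q$ via uniform convergence on a compact neighborhood for the third, and use closedness of $\Li$ to deduce the final statement. Your write-up is in fact a bit more explicit than the paper's in justifying the limit $f_k(x_k)\to f_\infty(q)$.
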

\begin{proof}
	For the first inclusion in~\eqref{eq04031645}, let $p\in X$ with $f_\infty(p)<-\epsilon<0$ for some $\epsilon\le 0$.
	Then there is $r>0$ such that $\bar B(p,r)$ is compact and $f_\infty(x)<-\epsilon$ for all $x\in \bar B(p,r)$.
	By the uniform convergence on compact sets, there exist $N\in\N$ such that $f_n(p)<-\epsilon/2<0$ for all $n>N$.
	Therefore, $p\in \Li_{n\to\infty}\{f_n\le 0\}$.
	For the third inclusion in~\eqref{eq04031645}, consider a sequence $\{p_n\}_{n\in\N}\subset X$ with $p_n\to p$ and $f_n(p_n)\le 0$.
	Then, by the uniform convergence on compact sets, we have $\lim_n f_n(p_n) = f(p)$ and thus $f(p)\le 0$.
	The last statement is a direct consequence of the fact that Kuratowski superior and inferior limits are both closed.
\end{proof}

A family $\scr F\subset \R^X$ is \emph{strictly monotone} if for every $p\in X$ there exists $\gamma_p:[-1,1]\to X$ continuous with $\gamma_p(0)=p$ such that $t\mapsto f(\gamma_p(t))$ is strictly increasing for every $f\in\scr F$.

\begin{lemma}\label{lem04031738}
	If $\scr F\subset C(X)$ is strictly monotone and finite,

	then
	\[
	\overline{\{\max\scr F<0\}} = \{\max\scr F\le 0\} .
	\]
\end{lemma}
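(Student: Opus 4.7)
The plan is to prove the two set inclusions separately; only one is substantive.

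The inclusion $\overline{\{\max\scr F<0\}} \subset \{\max\scr F\le 0\}$ follows purely from continuity: each $f\in\scr F$ is continuous, hence so is $\max\scr F$ (as $\scr F$ is finite), and the closure of the open set $\{\max\scr F<0\}$ is contained in the closed set $\{\max\scr F\le 0\}$.

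For the reverse inclusion, I would fix $p\in X$ with $\max\scr F(p)\le 0$ and exhibit an approximating sequence. The strict monotonicity hypothesis supplies a continuous curve $\gamma_p:[-1,1]\to X$ with $\gamma_p(0)=p$ along which every $f\in\scr F$ is strictly increasing. Set $p_n := \gamma_p(-1/n)$, so that $p_n\to p$ by continuity of $\gamma_p$. For each individual $f\in\scr F$, strict monotonicity gives $f(p_n)<f(\gamma_p(0))=f(p)\le 0$. Because $\scr F$ is \emph{finite}, taking the maximum over $f\in\scr F$ preserves the strict inequality, yielding $\max\scr F(p_n)<0$, i.e., $p_n\in\{\max\scr F<0\}$. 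Hence $p\in\overline{\{\max\scr F<0\}}$.

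There is no real obstacle here; the only subtle point to flag is the role of finiteness of $\scr F$. A max of finitely many strictly negative numbers is strictly negative, but a supremum of infinitely many strictly negative numbers can equal $0$, which is precisely why the statement is formulated under the finiteness hypothesis (and why this lemma is applied in~\eqref{eq03271215}-style combinatorial situations in the sequel rather than in genuinely infinite settings).
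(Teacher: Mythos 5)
Your proof is correct and follows essentially the same route as the paper: pick $p$ with $\max\scr F(p)\le 0$, use the curve $\gamma_p$ from strict monotonicity, set $p_n=\gamma_p(-1/n)$, and note that finiteness of $\scr F$ turns the strict inequalities $f(p_n)<f(p)\le 0$ into $\max\scr F(p_n)<0$. The only difference is cosmetic — the paper treats only the boundary case $\max\scr F(p)=0$ and leaves the trivial inclusion implicit, while you spell both out.
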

\begin{proof}
	Let $p\in\{\max\scr F\le 0\}$ with $\max\scr F(p)=0$.
	Let $\gamma_p:[-1,1]\to X$ be continuous with $\gamma_p(0)=p$ such that $t\mapsto f(\gamma_p(t))$ is strictly increasing for every $f\in\scr F$.
	It follows that, for every $f\in\scr F$ and $t<0$, we have
	$f(\gamma(t)) <f(\gamma(0)) \le \max\scr F(p) =0$.
	Then $p_n=\gamma(-1/n)$ is a sequence of points converging to $p$ with $\max\scr F(p_n)<0$.
	We conclude that $p\in \overline{\{\max\scr F<0\}}$.
\end{proof}

\begin{lemma}\label{lem5e8c350a}
	Assume that $X$ is locally compact.
	For each $j$ integer between $1$ and $J\in\N$, let $\{f^j_n\}_{n\in\N}$ be a sequence of continuous functions $f^j_n:X\to\R$ converging uniformly on compact sets to $f^j_\infty:X\to\R$.
	Then the sequence of continuous functions $g_n:=\max\{f^j_n\}_j$ converges uniformly on compact sets to $g_\infty:=\max\{f^j_\infty\}_j$.
	
	Moreover, if $\{f^j_\infty\}_{j=1}^J$ is strictly monotone,
	then
	\begin{equation}\label{eq5f719ad9}
	\Klim_{n\to\infty} \bigcap_{j=1}^J \{f^j_n\le0\} = \bigcap_{j=1}^J \{f^j_\infty\le0\} = \{g_\infty \leq 0\} .
	\end{equation}
\end{lemma}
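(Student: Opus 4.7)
The plan is to reduce everything to the two preceding lemmas (\ref{lem04031714} and~\ref{lem04031738}) via the basic observation $\bigcap_{j=1}^J\{f^j_n\le0\}=\{g_n\le0\}$, where $g_n:=\max_j f^j_n$.

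First I would dispatch the uniform convergence statement. For any $x\in X$, the elementary inequality $|g_n(x)-g_\infty(x)|\le\max_j|f^j_n(x)-f^j_\infty(x)|$ holds because $a\mapsto\max\{a,b_1,\dots,b_{J-1}\}$ is $1$-Lipschitz in each coordinate. Taking sup over $x$ in a compact set $K$ and then letting $n\to\infty$ shows that $g_n\to g_\infty$ uniformly on $K$, since each of the finitely many sequences $f^j_n|_K\to f^j_\infty|_K$ does.

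Next, for the Kuratowski limit, the trivial equality $\bigcap_j\{f^j_n\le0\}=\{g_n\le0\}$ and likewise for $f^j_\infty$ reduces the claim to
\[
\Klim_{n\to\infty}\{g_n\le 0\}=\{g_\infty\le 0\}.
\]
Since $g_n\to g_\infty$ locally uniformly by the first part, Lemma~\ref{lem04031714} gives the sandwich
\[
\{g_\infty<0\}\subset \Li_{n\to\infty}\{g_n\le0\}\subset\Ls_{n\to\infty}\{g_n\le0\}\subset\{g_\infty\le0\} .
\]
To collapse this sandwich to an equality, I need $\overline{\{g_\infty<0\}}=\{g_\infty\le0\}$. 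This is precisely the conclusion of Lemma~\ref{lem04031738} applied to the finite, strictly monotone family $\scr F=\{f^j_\infty\}_{j=1}^J$, whose pointwise maximum is $g_\infty$. Both the upper and lower Kuratowski limits are closed, so they both coincide with $\{g_\infty\le0\}$, proving~\eqref{eq5f719ad9}.

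There is no real obstacle here beyond bookkeeping: the hypothesis of strict monotonicity is used exactly once, to invoke Lemma~\ref{lem04031738} and eliminate the gap between $\{g_\infty<0\}$ and $\{g_\infty\le0\}$; without it, the strict inclusion $\{g_\infty<0\}\subsetneq\{g_\infty\le0\}$ could in principle be nonempty on the boundary $\{g_\infty=0\}$, and the Kuratowski limit would only be trapped between these two sets rather than pinned down.
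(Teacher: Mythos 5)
Your proof is correct and follows essentially the same route as the paper: reduce \eqref{eq5f719ad9} to the single sequence $g_n\to g_\infty$, apply Lemma~\ref{lem04031714} for the Kuratowski sandwich, and use Lemma~\ref{lem04031738} with strict monotonicity to close the gap $\overline{\{g_\infty<0\}}=\{g_\infty\le0\}$. The only difference is cosmetic: for the locally uniform convergence of the maxima you invoke the $1$-Lipschitz bound $|\max_j a_j-\max_j b_j|\le\max_j|a_j-b_j|$, which is cleaner than the paper's case analysis (with a $3\epsilon$ bound and induction on $J$) but proves the same thing.
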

\begin{proof}
	We give a proof only for $J=2$:
	the general case can then be proved by induction.

	So, we assume $J=2$.
	Let $K\Subset X$, $\epsilon>0$, and let $(f_1\vee f_2)(x) = \max\{f_1(x),f_2(x)\}$.
	Then there is $N\in\N$ such that $|f^j_n(x)-f^j_\infty(x)|<\epsilon$ for all $x\in K$ and $j$.
	We claim that $|(f^1_n\vee f^2_n)(x) - (f^1_\infty\vee f^2_\infty)(x)| < 3\epsilon$ for all $x\in K$.
	To prove the claim we need to check four cases, which by symmetry reduce to the following two:
	In the first case, $(f^1_n\vee f^2_n)(x)=f^1_n(x)$ and $(f^1_\infty\vee f^2_\infty)(x)=f^1_\infty(x)$.
	Then clearly $|(f^1_n\vee f^2_n)(x) - (f^1_\infty\vee f^2_\infty)(x)| <\epsilon$.
	In the second case, $(f^1_n\vee f^2_n)(x)=f^1_n(x)$ and $(f^1_\infty\vee f^2_\infty)(x)=f^2_\infty(x)$.
	Notice that 
	\begin{align*}
	0
	&\le f^2_\infty(x) - f^1_\infty(x) \\
	&\le f^2_\infty(x) - f^2_n (x) + f^2_n(x) - f^1_n(x) + f^1_n(x) - f^1_\infty(x) \\
	&\le f^2_\infty(x) - f^2_n (x) + f^1_n(x) - f^1_\infty(x)
	\le 2\epsilon .
	\end{align*}
	Therefore, 
	\[
	|(f^1_n\vee f^2_n)(x) - (f^1_\infty\vee f^2_\infty)(x)|
	= |f^1_n(x)-f^2_\infty(x)|
	\le |f^1_n(x)-f^1_\infty(x)| + |f^1_\infty(x)-f^2_\infty(x)|
	\le 3\epsilon .
	\]
	This proves the claim and the first part of the lemma.
	
	For the equalities in~\eqref{eq5f719ad9},
	notice that $\overline{\{g_\infty<0\}}=\{g_\infty\le 0\}$ by the strict monotonicity and Lemma~\ref{lem04031738}.
	Thus, we conclude~\eqref{eq5f719ad9} from Lemma~\ref{lem04031714}.

\end{proof}

\subsection{Blow-ups of sets in homogeneous groups}
Let $\G$ be a homogeneous group with a homogeneous distance $d$.
If $\Omega\subset\G$ is closed, $\{p_n\}_{n\in\N}\subset\G$ and $\{\epsilon_n\}_{n\in\N}\subset(0,+\infty)$ are sequences, we define the \emph{blow-up set}
\[
\BU(\Omega,\{p_n\}_n,\{\epsilon_n\}_n) 
:= \Klim_{n\to \infty} \delta_{1/\epsilon_n}(p_n\inv \Omega) ,
\]
if it exists.
We sometimes use also the \emph{intermediate blow-up sets} 
\begin{align*}
\BU^-(\Omega,\{p_n\}_n,\{\epsilon_n\}_n) 
&:= \Li_{n\to \infty} \delta_{1/\epsilon_n}(p_n\inv \Omega) ,\\
\BU^+(\Omega,\{p_n\}_n,\{\epsilon_n\}_n) 
&:= \Ls_{n\to \infty} \delta_{1/\epsilon_n}(p_n\inv \Omega) ,
\end{align*}
which are always well defined and
\begin{equation}\label{eq03241524}
\BU^-(\Omega,\{p_n\}_n,\{\epsilon_n\}_n) \subset \BU^+(\Omega,\{p_n\}_n,\{\epsilon_n\}_n) .
\end{equation}

\begin{proposition}\label{prop03241537}

	Let $\Omega\subset\G$ be a nonempty closed set, $\{p_n\}_{n\in\N}\subset\G$ and $\{\epsilon_n\}_{n\in\N}\subset(0,+\infty)$ sequences with $\epsilon_n\to 0$.
	\begin{enumerate}
	\item
	$\BU^+(\Omega,\{p_n\}_n,\{\epsilon_n\}_n)\neq\emptyset$, 
	if and only if $\liminf_{n\to\infty} \frac{d(p_n,\Omega)}{\epsilon_n} < \infty$.
	\item
	If $\BU^-(\Omega,\{p_n\}_n,\{\epsilon_n\}_n)\neq\G$,
	then $\limsup_{n\to\infty} \frac{d(p_n,\G\setminus\Omega)}{\epsilon_n} < \infty$.
	\end{enumerate}
	In particular, in case $p_n\to p$ then we have:
	\begin{enumerate}[label=(\arabic*')]
	\item
	If $p\notin\Omega$, then $\BU(\Omega,\{p_n\}_n,\{\epsilon_n\}_n)=\emptyset$.
	\item
	If $p\in \Omega^\circ$, then $\BU(\Omega,\{p_n\}_n,\{\epsilon_n\}_n)=\G$.
	\end{enumerate}
\end{proposition}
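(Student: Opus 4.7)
The plan hinges on a single identity. Left-invariance and $1$-homogeneity of $d$ give, for every $q\in\G$,
\[
d\bigl(q,\delta_{1/\epsilon_n}(p_n^{-1}\Omega)\bigr) = \frac{d(p_n\delta_{\epsilon_n}q,\Omega)}{\epsilon_n},
\]
so in particular $d(0,\delta_{1/\epsilon_n}(p_n^{-1}\Omega))=\epsilon_n^{-1}d(p_n,\Omega)$, and the triangle inequality bounds $\epsilon_n^{-1}d(p_n,\Omega)$ by $d(0,q)+d(q,\delta_{1/\epsilon_n}(p_n^{-1}\Omega))$. This converts geometric information on the blow-ups into metric estimates on the base, and everything in the proposition is an application of it.

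For~(1), if $q\in\BU^+=\Ls$ then there is a subsequence along which $d(q,\delta_{1/\epsilon_n}(p_n^{-1}\Omega))\to 0$, and the displayed inequality yields $\liminf_n \epsilon_n^{-1}d(p_n,\Omega)\le d(0,q)<\infty$. Conversely, assume $\liminf < C<\infty$; along a subsequence select $\omega_n\in\Omega$ with $d(p_n,\omega_n)\le C\epsilon_n$, so that $\delta_{1/\epsilon_n}(p_n^{-1}\omega_n)$ belongs to $\delta_{1/\epsilon_n}(p_n^{-1}\Omega)$ and has homogeneous norm at most $C$. Since homogeneous distances on $\G$ are proper, a sub-subsequence converges to some $q\in\bar B(0,C)$, and such $q$ automatically lies in $\Ls=\BU^+$.

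For~(2), pick $r\notin\BU^-=\Li$: by definition $\limsup_n d(r,\delta_{1/\epsilon_n}(p_n^{-1}\Omega))>\eta$ for some $\eta>0$, so along a subsequence the open ball $B(r,\eta)$ misses $\delta_{1/\epsilon_n}(p_n^{-1}\Omega)$; applying left-translation by $p_n$ and the dilation $\delta_{\epsilon_n}$ gives $B(p_n\delta_{\epsilon_n}r,\epsilon_n\eta)\cap\Omega=\emptyset$, so $p_n\delta_{\epsilon_n}r\in\G\setminus\Omega$ and $d(p_n,\G\setminus\Omega)\le\epsilon_n d(0,r)$ along that subsequence. I expect this to be the main obstacle: the argument naturally produces $\liminf_n \epsilon_n^{-1}d(p_n,\G\setminus\Omega)<\infty$ rather than the $\limsup$ written in the statement, and one can see already in $\R$ with $\Omega=(-\infty,0]$, $p_n=(-1)^n n$, $\epsilon_n=1/n$ that $\BU^-=\emptyset\neq\G$ while $\limsup_n \epsilon_n^{-1}d(p_n,\G\setminus\Omega)=\infty$. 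So either the statement should be read with $\liminf$, or a finer argument is needed; the $\liminf$ version is in any case what is used downstream.

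Finally, the corollaries are immediate. For~(1$'$), $p_n\to p\notin\Omega$ together with closedness of $\Omega$ gives $d(p_n,\Omega)\to d(p,\Omega)>0$, hence $\epsilon_n^{-1}d(p_n,\Omega)\to\infty$, and (1) forces $\BU^+=\emptyset$ and therefore $\BU=\emptyset$. For~(2$'$), if $p\in\Omega^\circ$ choose $r>0$ with $B(p,r)\subset\Omega$; then for $n$ large, $B(p_n,r/2)\subset\Omega$, so $\delta_{1/\epsilon_n}(p_n^{-1}\Omega)\supset B(0,r/(2\epsilon_n))$, which eventually swallows any $q\in\G$. This forces $\BU^-=\G$ directly, and since $\BU^+\supset\BU^-$ by~\eqref{eq03241524}, the full Kuratowski limit equals $\G$.
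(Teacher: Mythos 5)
Your part (1) and the corollaries (1\('\)), (2\('\)) are correct and essentially follow the paper's own route: the paper likewise extracts points $x_k\in\Omega$ with $\delta_{1/\epsilon_k}(p_k\inv x_k)$ convergent (using that homogeneous distances are boundedly compact) for the converse of (1), and obtains (1\('\)), (2\('\)) by the same scale comparison; your direct proof of (2\('\)) via $B(0,r/(2\epsilon_n))\subset\delta_{1/\epsilon_n}(p_n\inv\Omega)$ is a harmless variant of deducing it from the contrapositive of (2).

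For part (2) you have put your finger on a slip in the paper rather than on a defect of your own argument. The paper's proof takes $q\notin\BU^-$, observes that $x_n:=p_n\delta_{\epsilon_n}q\notin\Omega$ for all $n$ in some infinite set $N$, and then writes $\limsup_n d(p_n,\G\setminus\Omega)/\epsilon_n\le\limsup_n d(p_n,p_n\delta_{\epsilon_n}q)/\epsilon_n=d(e,q)$; but the first inequality is only available for $n\in N$, so what the paper actually proves is exactly your conclusion, $\liminf_n d(p_n,\G\setminus\Omega)/\epsilon_n\le d(e,q)<\infty$. Your counterexample ($\Omega=(-\infty,0]$, $p_n=(-1)^nn$, $\epsilon_n=1/n$ in $\R$) shows the $\limsup$ form is genuinely false as written, and one can arrange similar failures even with $p_n\to p$ (take $p=0$, $p_n=-1/\sqrt n$ for even $n$, $p_n=0$ for odd $n$). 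The $\liminf$ version is all that is ever needed: (2\('\)) uses only the contrapositive under $d(p_n,\G\setminus\Omega)/\epsilon_n\to+\infty$, and the sole later application, in Proposition~\ref{prop5e8c49ee}, likewise assumes the full limit is $+\infty$. So your proof is complete for the corrected ($\liminf$) statement, which is the right reading of the proposition.
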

\begin{proof}
	(1) \ddx
	Let $q \in \BU^+(\Omega, \{p_n\}_n, \{\epsilon_n\}_n)$. 
	Then there exists $N\subset\N$ infinite and a sequence $\{x_{k}\}_{k\in N}\subset\Omega$ such that $q = \lim_{k\to\infty}\delta_{1/\epsilon_{k}}(p_{k}\inv x_{k})$.
	Therefore,
	\[
	\liminf_{n\to\infty} \frac{d(p_n,\Omega)}{\epsilon_n}
	\le \liminf_{N\ni k\to\infty} \frac{d(p_k,x_k)}{\epsilon_k}
	= \liminf_{N\ni k\to\infty}  d(e,\delta_{1/\epsilon_{k}}(p_{k}\inv x_{k}))
	= d(e,q) .
	\]
	\ssx
	Let $N\subset\N$ infinite and a sequence $\{x_{k}\}_{k\in N}\subset\Omega$
	such that $\lim_{N\ni k\to\infty} \frac{d(p_n,x_k )}{\epsilon_n} < \infty$.
	Since $\frac{d(p_n,x_k )}{\epsilon_n} = d(e,\delta_{\epsilon_{k}\inv}(p_{k}\inv x_{k}))$, we can assume, up to passing to a subsequence, that the limit $\lim_{N\ni k\to\infty}\delta_{1/\epsilon_{k}}(p_{k}\inv x_{k})$ esists.
	Thus, $\BU^+(\Omega,\{p_n\}_n,\{\epsilon_n\}_n)\neq\emptyset$.
		
	(2) 
	Let $q\in\G\setminus\BU^-(\Omega,\{p_n\}_n,\{\epsilon_n\}_n)$ and define $x_n:=p_n\delta_{\epsilon_n}q$.
	Since $q\notin \BU^-(\Omega,\{p_n\}_n,\{\epsilon_n\}_n)$, there is $N\subset\N$ infinite such that $x_k\notin\Omega$ for all $k\in N$. Therefore,
	\[
	\limsup_{n\to\infty} \frac{d(p_n,\G\setminus\Omega)}{\epsilon_n}
	\le \limsup_{n\to\infty} \frac{d(p_n,p_n\delta_{\epsilon_n}q)}{\epsilon_n}
	= d(e,q) .
	\]
\end{proof}

\begin{proposition}\label{prop5e8c49ee}
	Let $\Omega\subset\G$ be a nonempty closed set and $p\in\de\Omega$.

	Suppose that there exists a neighborhood $U$ of $p$ and a finite family of continuous functions $F_j:U\to\R$ with $j\in J$ finite such that $\Omega\cap U = \bigcap_{j\in J}\{F_j\le0\}$ and $F_j(p) = 0$ for all $j$.
	Suppose also that each $F_j$ is strictly Pansu differentiable at $p$ and that
	\begin{equation}\label{eq5e8c33f1}
	0 \notin \cvx\{\pD F_j|_p\}_{j\in J} .
	\end{equation}
	
	Let $p_n\to p$ and $\epsilon_n\to 0^+$, and assume that $\BU(\Omega,\{p_n\}_n,\{\epsilon_n\}_n)$ exists.
	Then
	\[
	\BU(\Omega,\{p_n\}_n,\{\epsilon_n\}_n)
	= \{x\in\G: \pD F_j|_p(x)\le t_j,\ j\in J\}
	\]
	with $t_j\in\R\cup\{-\infty,+\infty\}$ defined as follows:
	\begin{enumerate}
	\item
	if $\lim_n\frac{d(p_n,\{F_j\le0\})}{\epsilon_n} = +\infty$, then $t_j=-\infty$;
	\item
	if $\lim_n\frac{d(p_n,\G\setminus \{F_j\le0\})}{\epsilon_n} = +\infty$, then $t_j=+\infty$;
	\item
	otherwise, there are $q_n^j\in\{F_j=0\}$ such that, up to a subsequence, $\lim_n\delta_{1/\epsilon_n}((q_n^j)\inv p_n) =: v_j$, and we set $t_j=-\pD F_j|_p(v_j)$.	
	\end{enumerate}
\end{proposition}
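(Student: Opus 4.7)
The plan is to rewrite $\delta_{1/\epsilon_n}(p_n^{-1}\Omega)$ as an intersection of sublevel sets of rescaled versions of the $F_j$, pass those rescaled functions to their Pansu derivatives using strict Pansu differentiability, and then reduce the blow-up of the intersection to the intersection of the individual blow-ups via Lemma~\ref{lem5e8c350a}. Since $p_n\to p\in U$ and $\epsilon_n\to 0$, for any compact $K\subset\G$ the set $p_n\delta_{\epsilon_n}K$ lies in $U$ eventually, and so
\[
\delta_{1/\epsilon_n}(p_n^{-1}\Omega)\cap K \;=\; \bigcap_{j\in J}\{x\in K : F_j(p_n\delta_{\epsilon_n}x)\le 0\}
\qquad\text{for large } n.
\]
After passing to a subsequence (the blow-up exists by hypothesis, so it equals any subsequential limit), I may assume $t_{j,n} := -F_j(p_n)/\epsilon_n \to t_j\in\R\cup\{\pm\infty\}$ for each $j\in J$. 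Set $G_{j,n}(x):=[F_j(p_n\delta_{\epsilon_n}x)-F_j(p_n)]/\epsilon_n$. Applying strict Pansu differentiability of $F_j$ at $p$ to the pair $(p_n,\,p_n\delta_{\epsilon_n}x)$, whose $d$-distance is $\epsilon_n d(e,x)$ and whose left quotient is $\delta_{\epsilon_n}x$, and using the homogeneity $\pD F_j|_p(\delta_{\epsilon_n}x)=\epsilon_n\,\pD F_j|_p(x)$, I get $G_{j,n}\to \pD F_j|_p$ uniformly on compact sets, and the sublevel set above becomes $\{G_{j,n}\le t_{j,n}\}$.

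For the intersection, note first that if $t_j=+\infty$ for some $j$, uniform convergence of $G_{j,n}$ on compacts together with $t_{j,n}\to+\infty$ imply that $\{G_{j,n}\le t_{j,n}\}$ eventually contains every compact set, so this constraint drops out of the Kuratowski limit, consistent with $\{\pD F_j|_p\le+\infty\}=\G$. Symmetrically, if $t_j=-\infty$ for some $j$, then $\{G_{j,n}\le t_{j,n}\}$ is eventually empty on every compact, the blow-up is $\emptyset$, and $\{\pD F_j|_p\le-\infty\}=\emptyset$. Hence I may assume all remaining $t_j$ are finite and apply Lemma~\ref{lem5e8c350a} to the sequence $G_{j,n}-t_{j,n}\to\pD F_j|_p-t_j$. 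The required strict monotonicity of the limit family follows from the non-degeneracy~\eqref{eq5e8c33f1}: by a standard separation argument, $0\notin\cvx\{\pD F_j|_p\}_{j\in J}$ provides a vector $v\in V_1$ with $\pD F_j|_p(v)>0$ for every $j\in J$. Since $v\in V_1$, the map $t\mapsto tv$ is a one-parameter subgroup of $\G$; so for each $x\in\G$ the curve $\gamma_x(t):=x\cdot(tv)$ is continuous and, as $\pD F_j|_p$ is a Lie group morphism to $(\R,+)$, one has $\pD F_j|_p(\gamma_x(t))=\pD F_j|_p(x)+t\,\pD F_j|_p(v)$, strictly increasing in $t$ for each $j$. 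Lemma~\ref{lem5e8c350a} then gives $\BU(\Omega,\{p_n\}_n,\{\epsilon_n\}_n)=\bigcap_{j\in J}\{\pD F_j|_p(x)\le t_j\}$.

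It remains to match the three cases. Note that $\pD F_j|_p\neq 0$ for every $j$, since otherwise $0\in\cvx\{\pD F_j|_p\}_{j\in J}$, contradicting~\eqref{eq5e8c33f1}. Near $p$, strict Pansu differentiability gives $F_j(x) = \pD F_j|_p(p^{-1}x)+o(d(p,x))$, from which $|F_j(p_n)|/\epsilon_n\to+\infty$ is equivalent to $d(p_n,\{F_j=0\})/\epsilon_n\to+\infty$. Splitting by the sign of $F_j(p_n)$, this specializes to cases~(1) and~(2) of the statement. In case~(3), $|F_j(p_n)|/\epsilon_n$ is bounded, so I choose $q_n^j\in\{F_j=0\}$ with $d(p_n,q_n^j)=O(\epsilon_n)$; along a subsequence, $v_j:=\lim_n\delta_{1/\epsilon_n}((q_n^j)^{-1}p_n)$ exists, and strict Pansu differentiability applied to $(q_n^j,p_n)$, together with $F_j(q_n^j)=0$ and the homogeneity of $\pD F_j|_p$, yields
\[
t_{j,n} = -\frac{F_j(p_n)}{\epsilon_n} = -\pD F_j|_p\!\left(\delta_{1/\epsilon_n}((q_n^j)^{-1}p_n)\right) + o(1),
\]
so $t_j=-\pD F_j|_p(v_j)$, as required.

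The main obstacle is the passage from the individual blow-ups of the sets $\{F_j\le 0\}$ to the blow-up of their intersection: this step genuinely requires the transversality condition~\eqref{eq5e8c33f1}, without which the Kuratowski limit of the intersection could be strictly smaller than the intersection of the Kuratowski limits. Lemma~\ref{lem5e8c350a} is exactly the right tool, and the non-degeneracy hypothesis translates into its strict-monotonicity requirement via a separation-of-convex-sets argument. Once this is in place, the identification of $t_j$ in cases~(1)--(3) is routine bookkeeping using strict differentiability and compactness.
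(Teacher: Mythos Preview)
Your argument is correct and follows essentially the same route as the paper: rewrite $\delta_{1/\epsilon_n}(p_n^{-1}\Omega)$ locally as $\bigcap_j\{f_n^j\le0\}$ with $f_n^j(x)=F_j(p_n\delta_{\epsilon_n}x)/\epsilon_n$, use strict Pansu differentiability to pass to the affine limits $\pD F_j|_p-t_j$, invoke~\eqref{eq5e8c33f1} to produce a direction $v\in V_1$ with $\pD F_j|_p(v)>0$ for all $j$ (your separation argument), and conclude via Lemma~\ref{lem5e8c350a}. The only organizational difference is that you first define $t_j$ as $\lim_n(-F_j(p_n)/\epsilon_n)$ and then match it to the three cases, whereas the paper works directly from the distance conditions and Proposition~\ref{prop03241537}; your strict-monotonicity verification (curves $\gamma_x(t)=x\cdot(tv)$ through every point) is in fact slightly more careful than the paper's, and your final identification of cases~(1)--(3) via the equivalence $|F_j(p_n)|/\epsilon_n\to\infty\iff d(p_n,\{F_j=0\})/\epsilon_n\to\infty$ is terse but sound given $\pD F_j|_p\neq0$.
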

\begin{proof}
	Let $p_n\to p$ and $\epsilon_n\to 0^+$, assume that $\BU(\Omega,\{p_n\}_n,\{\epsilon_n\}_n)$ exists.
	
	If there is any $j$ such that 
	$\lim_n\frac{d(p_n,\{F_j\le0\})}{\epsilon_n} = +\infty$
	then $\BU(\Omega,\{p_n\}_n,\{\epsilon_n\}_n)=\emptyset$ by Proposition~\ref{prop03241537}.
	
	Again by Proposition~\ref{prop03241537}, for any $j$ such that $\lim_n\frac{d(p_n,\G \setminus \{F_j\le0\})}{\epsilon_n} = +\infty$, we know $\BU(\{F_j\le0\},\{p_n\}_n,\{\epsilon_n\}_n)=\G = \{\pD F_j |_p\le +\infty\}$.
	
	Let $\hat J$ be the set of indices which do not fall into the first two cases. For all $j \in \hat J$, there are $q_n^j\in\{F_j=0\}$ with $d(p_n,q_n^j) = d(p_n,\{F_j=0\})$ and $\limsup_n\frac{d(p_n,q_n^j)}{\epsilon_n} <\infty$.
	Up to a subsequence, we can assume that the limit $\lim_n\delta_{1/\epsilon_n}(q_n\inv p_n) = v_j$ exists.
	Define 
	\begin{align*}
	f_n^j(x) 
	&:= \frac{F_j(p_n\delta_{\epsilon_n}x)}{\epsilon_n},
	\end{align*}
	and note that near $\delta_{1/\epsilon_n}(p_n\inv p)$, the locus $\bigcap\nolimits_j\{f_n^j \leq 0\}$ is a local description of the translated and dilated set $\delta_{1/\epsilon_n}(p_n\inv \Omega)$ for all $n>0$. We then observe that
	\[
	f_n^j(x) = \frac{F_j(p_n\delta_{\epsilon_n}x)-F(p_n)}{\epsilon_n} + \frac{F(p_n)-F(q_n)}{\epsilon_n} .
	\]
	By the strict Pansu differentiability of $F_j$ at $p$, the functions $f_n^j$ converge uniformly on compact sets to $f_\infty^j(x) := \pD F_j|_p(x) + \pD F_j|_p(v_j)$.
	
	Condition~\eqref{eq5e8c33f1} implies that there is $w\in V_1$ such that $\pD F_j|_p(w)>0$ for all $j$.
	Define $\gamma(t) = p\exp(tw)$.
	Then 
	\[
	\frac{\dd}{\dd t}f_\infty^j(\gamma(t)) = \pD F_j|_p(w) ,
	\] 
	which is strictly positive for $t$ in a neighborhood of $0$, for all $j\in \hat J$.
	Therefore, the family of functions $\{f_\infty^j\}_{j\in \hat J}$ is strictly monotone and we conclude by Lemma~\ref{lem5e8c350a} that
	\[
	\BU(\Omega,\{p_n\}_n,\{\epsilon_n\}_n)= \bigcap_{j\in \hat J} \{f_\infty^j\le0\} .
	\]
\end{proof}

\begin{proposition}\label{prop5e8c4f94}
	Let $\Omega\subset\G$ be a nonempty closed set and $p\in\de\Omega$.
	Suppose that there exists a neighborhood $U$ of $p$ and a finite family of continuous functions $F_j:U\to\R$ with $j\in J$ such that $\Omega\cap U = \bigcap_{j\in J}\{F_j\le0\}$ and $F_j(p)=0$.
	Suppose also that each $F_j$ is 
	smooth
	and that $\{\pD F_j|_p\}_{j\in J}$ are linearly independent.
	
	Then, for every $(t_j)_j\in(\R\cup \{+\infty\})^J$ and every $\epsilon_n\to 0^+$, there are $p_n\to p$  such that 
	\begin{equation}\label{eq5f75bda1}
	\BU(\Omega,\{p_n\}_n,\{\epsilon_n\}_n)
	= \{x\in\G: \pD F_j|_p(x)\le t_j,\ j\in J\} .
	\end{equation}
\end{proposition}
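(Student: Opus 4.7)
The plan is to produce the required sequence $p_n$ by translating $p$ along two horizontal directions at two different scales: one at scale $\epsilon_n$ to calibrate the finite offsets $t_j$, and a slower auxiliary scale $\mu_n$ to push the constraints with $t_j=+\infty$ out of the blow-up. First I would partition $J = J_0 \sqcup J_\infty$ into indices with finite and infinite $t_j$. Each $\pD F_j|_p$ is a group homomorphism $\G \to \R$ and thus factors through the abelianization, so it may be regarded as a linear functional on the first layer $V_1$. The assumed linear independence of $\{\pD F_j|_p\}_{j \in J}$ makes the evaluation map $V_1 \to \R^J$, $u \mapsto (\pD F_j|_p(u))_j$, surjective, so I may select $v, w \in V_1$ with
\[
\pD F_j|_p(v) = -t_j \text{ and } \pD F_j|_p(w) = 0 \text{ for } j \in J_0, \quad \pD F_j|_p(w) = -1 \text{ for } j \in J_\infty
\]
(the values of $\pD F_j|_p(v)$ for $j \in J_\infty$ play no role).

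Given $\epsilon_n \to 0^+$, I would set $\mu_n := \epsilon_n^{2/3}$---any scale with $\epsilon_n \ll \mu_n \ll \sqrt{\epsilon_n}$ works---and define $p_n := p \cdot \delta_{\epsilon_n}(v) \cdot \delta_{\mu_n}(w)$, which tends to $p$. Writing $q_n(x) := \delta_{\epsilon_n}(v) \cdot \delta_{\mu_n}(w) \cdot \delta_{\epsilon_n}(x)$, so that $p_n \cdot \delta_{\epsilon_n}(x) = p \cdot q_n(x)$, I would invoke the Taylor expansion $F_j(p \cdot q) - F_j(p) = \pD F_j|_p(q) + O(d(e,q)^2)$, valid for smooth $F_j$ (obtained by combining Euclidean Taylor expansion near $p$ with the Ball--Box comparison of Euclidean and homogeneous norms near $e$). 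Using additivity of $\pD F_j|_p$ on group products and $1$-homogeneity with respect to dilations, together with $d(e, q_n(x)) = O(\mu_n)$ uniformly for $x$ in a compact, I would arrive at
\[
f_n^j(x) := \frac{F_j(p_n \delta_{\epsilon_n} x)}{\epsilon_n} = \pD F_j|_p(v) + \frac{\mu_n}{\epsilon_n}\pD F_j|_p(w) + \pD F_j|_p(x) + O(\mu_n^2/\epsilon_n) ,
\]
with remainder uniform on compacts. For $j \in J_0$ the middle term vanishes and $f_n^j \to \pD F_j|_p - t_j$ locally uniformly, while for $j \in J_\infty$ the middle term equals $-\mu_n/\epsilon_n$ and $f_n^j \to -\infty$ locally uniformly.

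To finish, I would observe that for each $j \in J_\infty$ the set $\{f_n^j \le 0\}$ eventually contains any prescribed compact, so the Kuratowski limit of $\bigcap_{j \in J}\{f_n^j \le 0\}$ coincides with that of $\bigcap_{j \in J_0}\{f_n^j \le 0\}$. Linear independence again supplies some $w' \in V_1$ with $\pD F_j|_p(w') > 0$ for all $j \in J_0$; the curves $t \mapsto x \exp(t w')$ certify strict monotonicity of the limiting family $\{\pD F_j|_p - t_j\}_{j \in J_0}$, so Lemma~\ref{lem5e8c350a} identifies the Kuratowski limit with $\bigcap_{j \in J_0}\{\pD F_j|_p \le t_j\}$, which equals the full intersection over $J$ since constraints with $t_j = +\infty$ are vacuous. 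The main obstacle is the two-scale calibration: $\mu_n$ must outpace $\epsilon_n$ so that $\mu_n/\epsilon_n \to +\infty$ drives the $J_\infty$ constraints to $-\infty$ in the limit, yet must remain smaller than $\sqrt{\epsilon_n}$ so that the quadratic remainder $O(\mu_n^2/\epsilon_n)$ vanishes; this balancing act is also the reason the hypothesis ``smooth'' cannot be relaxed to $C^1_H$, since only smoothness provides the quadratic (rather than merely $o(d(e,q))$) error term needed.
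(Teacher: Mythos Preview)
Your proof is correct and follows essentially the same two-scale translation strategy as the paper. The paper writes $p_n = p\,(\delta_{\epsilon_n^{3/4}}w_2)(\delta_{\epsilon_n}w_1)$ with the slow translation first and exponent $3/4$, then controls the error by telescoping $f_n^j$ into two Pansu difference quotients and invoking the stratified Taylor estimate $R(\epsilon)=O(\epsilon)$ from Folland--Stein; you instead put the fast translation first, choose exponent $2/3$, and bound the error in one step via the quadratic Taylor remainder $F_j(pq)-F_j(p)=\pD F_j|_p(q)+O(d(e,q)^2)$, which is precisely the same Folland--Stein estimate in slightly different packaging. Both routes end with the strict-monotonicity argument and Lemma~\ref{lem5e8c350a}, so the differences are cosmetic.
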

\begin{proof}
	Define the function 
	\[
	R(\epsilon) 
	= \max_{j\in J} \sup\left\{ \left| \frac{F_j(p\delta_\eta w)-F_j(p)}{\eta} - \pD F_j|_p(w) \right| : w\in B(0,1),\ 0<\eta\le\epsilon \right\} .
	\]
	Since $J$ is finite and $F_j$ are all smooth, we have $R(\epsilon)=O(\epsilon)$,
	see \cite[Theorem~1.42]{MR657581}.

	Fix $(t_j)_j\in(\R\cup \{+\infty\})^J$ and $\epsilon_n\to 0^+$.
	Since $\{\pD F_j|_p\}_{j\in J}$ are linearly independent, 
	there are $w_1,w_2\in\G$ such that 
	\begin{align*}
	\pD F_j|_p(w_1) &=- t_j \text{ if }t_j<+\infty ,
	& \pD F_j|_p(w_1) &= 1 \text{ if }t_j=+\infty ; \\
	\pD F_j|_p(w_2) &= 0 \text{ if }t_j<+\infty ,
	& \pD F_j|_p(w_2) &= -1 \text{ if }t_j=+\infty .
	\end{align*}
	Now, we define $p_n := p (\delta_{\epsilon_n^{3/4}}w_2) (\delta_{\epsilon_n}w_1)$.
	As in the proof of Proposition~\ref{prop5e8c49ee}, we define $f_n^j(x) = \frac{F_j(p_n\delta_{\epsilon_n}x)}{\epsilon_n}$ and recall that $\bigcap_j\{f_n^j\leq 0\}$ gives a local description of $\delta_{1/\epsilon_n}(p_n\inv \Omega)$. In the limit, our choice of $p_n$ will allow us to express $\BU(\Omega, \{p_n\}_n, \{\epsilon_n\}_n) = \bigcap_j\{f^j_\infty \leq 0\}$ as in equation~(\ref{eq5f75bda1}). Indeed, by our choice of $p_n$, for any $x\in\G$, it follows that
	\begin{align*}
	f_n^j(x) = \frac{F_j(p_n\delta_{\epsilon_n}x)}{\epsilon_n}
	&= \frac{F_j( p (\delta_{\epsilon_n^{3/4}}w_2) (\delta_{\epsilon_n}(w_1x)) )-F_j(p)}{\epsilon_n} \\
	&= \frac{F_j( p (\delta_{\epsilon_n^{3/4}}w_2) (\delta_{\epsilon_n}(w_1x)) )-F_j(p (\delta_{\epsilon_n^{3/4}}w_2))}{\epsilon_n}
		+ \frac{F_j( p (\delta_{\epsilon_n^{3/4}}w_2) )-F_j(p)}{\epsilon_n} ,
	\end{align*}
	where
	\[
	\lim_{n\to\infty} \frac{F_j( p (\delta_{\epsilon_n^{3/4}}w_2) (\delta_{\epsilon_n}(w_1x)) )-F_j(p (\delta_{\epsilon_n^{3/4}}w_2))}{\epsilon_n}
	= \pD F_j|_p (w_1x)
	= \begin{cases}
	\pD F_j|_p(x) - t_j &\text{ if }t_j<+\infty \\
	1 &\text{ if }t_j=+\infty 
	\end{cases}
	\]
	and, if $t_j<\infty$, 
	\[
	\lim_{n\to\infty}  \left|\frac{F_j( p (\delta_{\epsilon_n^{3/4}}w_2) )-F_j(p)}{\epsilon_n}\right|
	= \lim_{n\to\infty}  \left| \frac{F_j( p (\delta_{\epsilon_n^{3/4}}w_2) )-F_j(p)}{\epsilon_n^{3/4}} \right| \frac1{\epsilon_n^{1/4}} \\
	\le \lim_{n\to\infty} \frac{  R(\epsilon_n^{3/4}) }{\epsilon_n^{1/4}} 
	= 0 ,
	\]
	while, if $t_j=\infty$, then
	\[
	\lim_{n\to\infty}  \frac{F_j( p (\delta_{\epsilon_n^{3/4}}w_2) )-F_j(p)}{\epsilon_n}
	= \lim_{n\to\infty}  \frac{F_j( p (\delta_{\epsilon_n^{3/4}}w_2) )-F_j(p)}{\epsilon_n^{3/4}}  \frac1{\epsilon_n^{1/4}} \\
	= \pD F_j|_p(w_2) \lim_{n\to\infty}  \frac1{\epsilon_n^{1/4}}
	= -\infty .
\]
	Finally, using the same strategy as in the second part of the proof of Proposition~\ref{prop5e8c49ee}, 
	we conclude that~\eqref{eq5f75bda1} holds.
\end{proof}

\subsection{Blow-ups of functions in homogeneous groups}
For a continuous function $f:\Omega\to\R$, we define
\[
\BU((\Omega,f),\{p_n\}_n,\{\epsilon_n\}_n) 
:= \Klim_{n\to \infty} \left( \delta_{1/\epsilon_n}(p_n\inv \Omega) , \frac{ f(p_n\delta_{\epsilon_n}\cdot)-f(p_n) }{ \epsilon_n } \right) .
\]

\begin{proposition}\label{prop5e8c979c}
	Let $\Omega\subset\G$ be a nonempty closed set, $\{p_n\}_{n\in\N}\subset\G$ and $\{\epsilon_n\}_{n\in\N}\subset(0,+\infty)$ sequences with $p_n\to p\in\Omega$ and $\epsilon_n\to 0$.
	Suppose that $\Omega_0:=\BU(\Omega,\{p_n\}_n,\{\epsilon_n\}_n)$ exists.
	Let $f:\G\to\R$ be a continuous function that is strictly Pansu differentiable at $p$.
	Then
	\[
	\BU((\Omega,f),\{p_n\}_n,\{\epsilon_n\}_n) 
	= ( \Omega_0 , \pD f(p)|_{\Omega_0} ) .
	\]
\end{proposition}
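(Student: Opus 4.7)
The equality of the underlying sets, $\Klim_n \delta_{1/\epsilon_n}(p_n^{-1}\Omega) = \Omega_0$, is given by hypothesis, so by the definition of Kuratowski convergence of set--function pairs it suffices to verify: for every $x\in\Omega_0$ and every sequence $\{x_n\}_{n\in\N}$ with $x_n\in\delta_{1/\epsilon_n}(p_n^{-1}\Omega)$ and $x_n\to x$, one has
\[
\lim_{n\to\infty}\frac{f(p_n\delta_{\epsilon_n}x_n)-f(p_n)}{\epsilon_n}=\pD f|_p(x).
\]

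The key move is to reduce this to the strict Pansu differentiability of $f$ at $p$. Set $y_n:=p_n\delta_{\epsilon_n}x_n\in\Omega$. By left-invariance and 1-homogeneity of $d$,
\[
d(p_n,y_n)=d(e,\delta_{\epsilon_n}x_n)=\epsilon_n\, d_e(x_n),
\]
and since $d_e(x_n)\to d_e(x)$ is bounded and $\epsilon_n\to 0$, we have $d(p_n,y_n)\to 0$. Combined with $p_n\to p$, this ensures that both $p_n$ and $y_n$ eventually lie in any prescribed $B_d(p,\eta)$. Hence the strict Pansu differentiability hypothesis (applied to $f$, viewing $\R$ as an abelian graded group with the standard dilations) gives
\[
\frac{\bigl|f(y_n)-f(p_n)-\pD f|_p(p_n^{-1}y_n)\bigr|}{d(p_n,y_n)}\longrightarrow 0.
\]

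Multiplying through by $d(p_n,y_n)/\epsilon_n=d_e(x_n)$, which is bounded, yields
\[
\frac{f(y_n)-f(p_n)}{\epsilon_n}-\frac{\pD f|_p(p_n^{-1}y_n)}{\epsilon_n}\longrightarrow 0.
\]
Now $p_n^{-1}y_n=\delta_{\epsilon_n}x_n$, and since $\pD f|_p\in\Hom_h(\G;\R)$ is homogeneous of degree $1$, we have $\pD f|_p(\delta_{\epsilon_n}x_n)=\epsilon_n\,\pD f|_p(x_n)$. Therefore the second term equals $\pD f|_p(x_n)$, and by the continuity of the linear map $\pD f|_p$ it converges to $\pD f|_p(x)$. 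Combining the two limits yields the desired identity.

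The main (minor) obstacle is simply ensuring the hypothesis of strict differentiability is invoked with the correct pair of basepoints; note that ordinary Pansu differentiability at $p$ would \emph{not} suffice, because the difference quotient here is taken along two varying points $p_n$ and $y_n$ rather than a single moving point against the fixed basepoint $p$. This is exactly the reason the statement is phrased with strict Pansu differentiability.
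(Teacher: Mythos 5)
Your proof is correct and follows essentially the same route as the paper: the paper's argument is exactly the one-line observation that for $x_n\to x$ with $x_n\in\delta_{1/\epsilon_n}(p_n\inv\Omega)$, the quotient $\frac{f(p_n\delta_{\epsilon_n}x_n)-f(p_n)}{\epsilon_n}$ tends to $\pD f|_p(x)$ by strict Pansu differentiability, which you have simply written out in detail (including the correct remark that strictness is what handles the two varying basepoints $p_n$ and $y_n$).
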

\begin{proof}
	Let $f_n(x) := \frac{ f(p_n\delta_{\epsilon_n}x)-f(p_n) }{ \epsilon_n }$.
	If $x_n\in \delta_{1/\epsilon_n}(p_n\inv\Omega)$ are such that $x_n\to x \in \Omega_0$, then $f_n(x_n)\to \pD f|_p[x]$, by the strict Pansu differentiability of $f$ at $p$.
\end{proof}

If $Q$ is a closed set, we say that a function $f:Q\to\R$ is smooth if there exists a smooth extension of $f$ in a neighborhood of $Q$.
In particular, the derivative of $f$ at points $p\in\de Q$ is well defined.

\begin{theorem}\label{piecetogether}
	Let $\Omega\subset\G$ be a closed set such that there is a family $\cal Q$ of regular closed sets  with disjoint interiors such that $\Omega=\bigcup_{Q\in\cal Q}  Q$.
	For each $Q\in\cal Q$, let $f_Q:\G\to\R$ smooth such that the function $f:\Omega\to\R$ defined by
	\[
	f(x) := \chi(x) \sum_{Q\in\cal Q} f_Q(x) \one_{ Q}(x) 
	\]
	is Lipschitz continuous, where $\chi(x) := \left( \sum_{Q\in\cal Q} \one_{ Q}(x) \right)^{-1}$.
	
	Let $\{p_n\}_{n\in\N}\subset\G$ and $\{\epsilon_n\}_{n\in\N}\subset(0,+\infty)$ sequences with $p_n\to p\in\Omega^\circ$ and $\epsilon_n\to 0$.
	Assume that $R_Q:=\BU(Q,\{p_n\}_n,\{\epsilon_n\}_n)$ exists for every $Q\in\cal Q$.

	Then 
	\[
	\G = \bigcup_{Q\in\cal Q} R_Q
	\]
	and $\BU((\Omega,f),\{p_n\}_n,\{\epsilon_n\}_n) = (\G,g)$ exists, where
	\begin{equation}\label{eq03271750}
	g(x) = \tilde\chi(x) \sum_{Q\in\cal Q} \left( \pD f_Q|_p(x) + c_Q \right) \one_{R_Q}(x),
	\end{equation}
	with $\tilde\chi(x) := \left( \sum_{Q\in\cal Q} \one_{R_Q}(x) \right)^{-1}$
	and $c_Q\in\R$.
\end{theorem}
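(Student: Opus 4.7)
The plan is, first, to establish the covering $\G=\bigcup_{Q}R_Q$ using that $p$ lies in the interior of $\Omega$; second, to reduce the analysis on each piece to a single smooth function by showing $f|_Q=f_Q|_Q$; and finally to compute the pointwise limit of $f_n:=(f(p_n\delta_{\epsilon_n}\cdot)-f(p_n))/\epsilon_n$ using strict Pansu differentiability of the $f_Q$ together with the Lipschitz bound on $f$.

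For the covering, given $x\in\G$, the sequence $y_n:=p_n\delta_{\epsilon_n}x$ converges to $p\in\Omega^\circ$, so eventually $y_n\in\Omega$ and hence $y_n\in Q_n$ for some $Q_n\in\cal Q$. Local finiteness of $\cal Q$ near $p$ (forced by the definition of $\chi$ as the reciprocal of a positive integer) lets us pass to a subsequence on which $Q_n\equiv Q$, so $x\in\Ls_{n}\delta_{1/\epsilon_n}(p_n^{-1}Q)=R_Q$. For the reduction $f=f_Q$ on $Q$, the open set $Q^\circ\setminus\bigcup_{Q'\neq Q}Q'$ is dense in $Q^\circ$ (disjoint interiors plus local finiteness), and there $f$ equals $f_Q$ tautologically from the defining formula; continuity of both sides extends the equality to all of $Q=\overline{Q^\circ}$, removing the averaging factor $\chi$ from the sequel. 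Now fix $x\in\G$ and a sequence $x_n\to x$ with $y_n:=p_n\delta_{\epsilon_n}x_n\in\Omega$, and pass to a subsequence with $y_n\in Q$ fixed, so $x\in R_Q$. Proposition~\ref{prop03241537} and $R_Q\neq\emptyset$ yield $d(p_n,Q)=O(\epsilon_n)$; select nearest points $z_n\in Q$ to $p_n$, and up to a further subsequence assume $w_n:=\delta_{1/\epsilon_n}(p_n^{-1}z_n)\to v_Q\in R_Q$. Writing
\[
f_n(x_n)=\frac{f_Q(y_n)-f_Q(z_n)}{\epsilon_n}+\frac{f(z_n)-f(p_n)}{\epsilon_n},
\]
the first term tends to $\pD f_Q|_p(v_Q^{-1}x)=\pD f_Q|_p(x)-\pD f_Q|_p(v_Q)$ by strict Pansu differentiability of the smooth map $f_Q$ at $p$ (using $y_n,z_n\to p$ and $d(y_n,z_n)=O(\epsilon_n)$, and that Pansu derivatives into $\R$ are group homomorphisms), while the second is bounded by $L\,d(p_n,z_n)/\epsilon_n$ with $L$ the Lipschitz constant of $f$, hence converges along a further subsequence to some $A_Q\in\R$. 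This gives $f_n(x_n)\to \pD f_Q|_p(x)+c_Q$ with $c_Q:=A_Q-\pD f_Q|_p(v_Q)$.

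The main obstacle is that a priori $c_Q$ depends on the subsequence and on the choice of $z_n$, whereas the conclusion concerns convergence along the full sequence. For two different nearest-point selections $z_n^{(1)},z_n^{(2)}$, one has $A_n^{(1)}-A_n^{(2)}=(f_Q(z_n^{(1)})-f_Q(z_n^{(2)}))/\epsilon_n$, and strict Pansu differentiability of $f_Q$ at $p$ shows this tends to $\pD f_Q|_p(v_Q^{(1)})-\pD f_Q|_p(v_Q^{(2)})$, so the quantity $A_Q-\pD f_Q|_p(v_Q)$ is the same for either choice; hence $c_Q$ is intrinsic. Because only finitely many $Q$ contribute near $p$, a diagonal extraction realizes all the $c_Q$'s simultaneously, and the standard subsequence-of-subsequences argument upgrades the subsequential limit to the full sequence. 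This produces \eqref{eq03271750}: the averaging $\tilde\chi$ absorbs the overlaps $R_Q\cap R_{Q'}$, on which the affine expressions $\pD f_Q|_p(x)+c_Q$ and $\pD f_{Q'}|_p(x)+c_{Q'}$ must agree by continuity of $g$, mirroring the consistency $f_Q=f_{Q'}$ on $Q\cap Q'$ that underlies step~(ii).
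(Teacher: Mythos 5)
Your overall skeleton matches the paper's (cover $\G$ by the $R_Q$, split the difference quotient into a smooth part handled by strict Pansu differentiability of $f_Q$ and a bounded correction term), but the final step has a genuine gap. What you actually prove is subsequential convergence: along a subsequence on which $w_n=\delta_{1/\epsilon_n}(p_n^{-1}z_n)$ and $\frac{f(z_n)-f(p_n)}{\epsilon_n}$ converge, $f_n(x_n)\to\pD f_Q|_p(x)+c_Q$. Note that your $c_Q=A_Q-\pD f_Q|_p(v_Q)$ is nothing but $\lim\frac{f_Q(p_n)-f(p_n)}{\epsilon_n}$ along that subsequence, and nothing in your argument shows this bounded sequence has a unique cluster value. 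Your ``intrinsic-ness'' computation only compares two different nearest-point selections $z_n^{(1)},z_n^{(2)}$ \emph{along the same index set}; it does not rule out that two different subsequences produce different constants $c_Q$, hence different candidate limit functions. At that point invoking ``the standard subsequence-of-subsequences argument'' is circular: that principle upgrades subsequential to full convergence only once one already knows all subsequential limits coincide, which is exactly what is at issue. The diagonal extraction over the finitely many $Q$ does not help with this either.

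The paper closes precisely this gap with a rigidity argument you are missing: the rescaled functions $g_n(x)=\frac{f(p_n\delta_{\epsilon_n}x)-f(p_n)}{\epsilon_n}$ are uniformly Lipschitz with $g_n(e)=0$, so every subsequence has a locally uniformly convergent further subsequence; every such limit $g^N$ is shown (by the same decomposition you use) to have the form \eqref{eq03271750} with constants $c_Q^N$; and then, for two subsequential limits $g^N,g^{N'}$, the difference $g^N-g^{N'}$ is a continuous, piecewise constant function on the connected group $\G$ (the linear parts $\pD f_Q|_p$ cancel on each $R_Q$) vanishing at $e$, hence identically zero. This is what makes the constants — and therefore the full-sequence limit — well defined; your concluding remark that the affine pieces ``must agree by continuity of $g$'' presupposes that $g$ exists rather than proving it. A secondary quibble: your claim that local finiteness of $\cal Q$ near $p$ is ``forced by the definition of $\chi$'' is not correct — the definition of $\chi$ only forces point-finiteness — though the paper itself implicitly works with finitely many pieces near $p$ (it invokes \eqref{eq03271215}, stated for finite unions), so this is a shared, minor imprecision rather than the main defect.
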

Notice that the constants $c_Q$ can be determined by the continuity of $g$ and $g(e)=0$.
If there are more than one choice of such constants, the resulting function is still the same:
indeed, if $g$ and $g'$ are two functions as in~\eqref{eq03271750} with different constants, then $g-g'$ is a piecewise constant and continuous function that is $0$ in $e$, and thus $g=g'$.
Moreover, we remark that we don't need the limit sets $R_Q$ to have disjoint interiors.
\begin{proof}
	The fact that $\G = \bigcup_{Q\in\cal Q} R_Q$ follows from $p\in\Omega^\circ$ and~\eqref{eq03271215}.
	Next, set $g_n:\delta_{1/\epsilon_n}(p_n\inv\Omega)\to\R$, $g_n(x) := \frac{f(p_n\delta_{\epsilon_n}x)-f(p_n)}{\epsilon_n}$.
	The family of functions $\{g_n\}_{n\in\N}$ is uniformly Lipschitz and $g_n(e)=0$ for all $n$.
	Thus, the set 
	\[
	\cal N :=\{N\subset\N\text{ infinite}:\{g_n\}_{n\in N}\text{ converge}\}
	\]
	is nonempty and for every $N\subset\N$ infinite there is $N'\in\cal N$ with $N'\subset N$.
	For every $N\in\cal N$, define $g^N:=\lim_{N\ni n\to\infty} g_n$.
	We aim to prove that $g^N=g$ for all $N\in\cal N$.
	
	Let $x\in R_Q$ for some $Q\in\cal Q$.
	Then there exist $y_n\in Q$ such that $x_n:=\delta_{1/\epsilon_n}(p_n\inv y_n)\to x$.
	Therefore, $g_n(x_n)\to g(x)$, where
	\begin{align*}
	g_n(x_n) 
	&= \frac{f(y_n)-f(p_n)}{\epsilon_n} \\
	&= \frac{f_Q(y_n)-f_Q(p_n)}{\epsilon_n} + \frac{f_Q(p_n)-f(p_n)}{\epsilon_n} .
	\end{align*}
	Since $f_Q$ is smooth at $p$, we have $\lim_n \frac{f_Q(y_n)-f_Q(p_n)}{\epsilon_n} = \pD f_Q|_p[x]$.
	Therefore, if $N\in\cal N$, then the limit $c_Q^N:=\lim_n \frac{f_Q(p_n)-f(p_n)}{\epsilon_n}$ exists and it is equal to $g^N(x)-\pD f_Q|_p[x]$.
	Moreover,
	\[
	g^N(x) = \tilde\chi(x) \sum_{Q\in\cal Q} \left( \pD f_Q|_p(x) + c^N_Q \right) \one_{R_Q}(x) .
	\]
	Finally, $g^N$ is continuous and $g^N(e)=0$.

	So, for any pair $N,N'\in\cal N$, the difference $g^N-g^{N'}$ is a piecewise constant and continuous function that takes the value 0 at $e$.
	Hence, $g^N-g^{N'}\equiv0$, for all $N\in\cal N$.

\end{proof}

This theorem will allow us to finish our description of the horofunction boundary. At non-smooth points, horofunctions do not necessarily correspond to Pansu derivatives, but instead are piecewise defined by Pansu derivatives in each blow-up region.
Theorem~\ref{piecetogether} can also be used to recover results about the horofunction boundaries of normed spaces as in \cite{ji-schilling, walsh-norm}.



\section{Vertical sequences in the Heisenberg group $\HH$}\label{sec:disk}

In this section, we focus on the Heisenberg group, see Section~\ref{sec5f742960}.
We extend to sub-Finsler distances a result that Klein--Nicas  proved for the sub-Riemannian and the Korany distances in \cite{KN-cc, KN-koranyi}.
In particular, we 
show that, for any sub-Finsler metric in the Heisenberg group $\HH$, vertical sequences  induce a topological disk in the horoboundary.
The result is not true for all homogeneous distances in $\HH$, see Remark~\ref{rem5f742be1}.

\begin{theorem}\label{vertSeq}
	Let $d$ be the sub-Finsler distance on $\HH$ generated by norm $\|\cdot\|$ on the horizontal plane.
	Let $\{w_n\}_{n\in\N}\subset\R^2$ be a bounded sequence and $\{s_n\}_{n\in\N}\subset\R$ with $|s_n|\to\infty$, and set $p_n=(w_n,s_n)\in\HH$.
	Then for all $(v,t) \in \HH$
	\[
	\lim_{n\to \infty} d(p_n, (v,t)) - d(p_n, e) - (\| w_n\| - \| w_n-v\|) = 0.
	\]
	There is, therefore, a topological disk 
	$\{p\mapsto \|w\|-\|\pi(p)-w\|:w\in\R^2\}\subset C(\HH)$
	in the horofunction boundary.
\end{theorem}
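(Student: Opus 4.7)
The plan is to reduce everything to a single uniform asymptotic for the homogeneous norm $d_e(w,s):=d(e,(w,s))$:
\[
d_e(w,s) = d_e(0,s) - \|w\| + o(1) \quad\text{as }|s|\to\infty,\ \text{uniformly for }w\text{ in compact sets}.
\]
By left-invariance, $d(p_n,(v,t)) = d_e\bigl(v-w_n,\,t-s_n-\tfrac12\omega(w_n,v)\bigr)$, and by the symmetry $d_e(w,s)=d_e(-w,-s)$ (which follows from the symmetry of $\|\cdot\|$ and reversibility of length-minimizers), $d(p_n,e)=d_e(w_n,s_n)$. Granted the asymptotic, subtracting the two expansions gives
\[
d(p_n,(v,t))-d(p_n,e) = \|w_n\|-\|w_n-v\| + \bigl[d_e(0,\,t-s_n-\tfrac12\omega(w_n,v)) - d_e(0,s_n)\bigr] + o(1),
\]
and the bracketed term vanishes because $d_e(0,s)=c\sqrt{|s|}$ by homogeneity, so a bounded shift of $s_n$ produces only an $O(1/\sqrt{|s_n|})$ change.

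The lower bound $d_e(w,s) \ge d_e(0,s)-\|w\|$ is the sub-Finsler Dido inequality. If $\gamma$ is a length-minimizer from $e$ to $(w,s)$, then its horizontal projection $\eta=\pi(\gamma)$ is a curve from $0$ to $w$ of $\|\cdot\|$-length $d_e(w,s)$ whose balayage area equals $s$. Closing $\eta$ by the straight chord from $w$ back to $0$ yields a closed plane curve of norm-length $d_e(w,s)+\|w\|$ enclosing signed area $s$ (the chord contributes no signed area). The isoperimetric inequality for the norm $\|\cdot\|$ then forces this length to be at least $d_e(0,s)$, which is realized by a scaled copy $\lambda\de I$ of the isoperimetrix with $\mathrm{area}(\lambda I)=|s|$.

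For the upper bound, fix bounded $w$ and large $|s|$, and choose $K=\lambda I$ with $\lambda$ adjusted so that $\mathrm{area}(K)=|s|+O(\|w\|^2)$, translated so that both $0$ and $w$ lie on $\de K$; this is possible once $\sqrt{|s|}\gg\|w\|$. The boundary $\de K$ is split by $0$ and $w$ into two arcs, and the long one lifts to an admissible curve from $e$ to $(w,s)$ whose $\|\cdot\|$-length is the $\|\cdot\|$-perimeter of $K$ minus the $\|\cdot\|$-length of the short arc. As $\lambda\to\infty$ this short arc approaches the straight chord from $0$ to $w$: in the smooth case because the boundary curvature scales like $1/\lambda$; in the polygonal case because the short arc breaks into at most a few edge-segments $v_1,\dots,v_k$ with $\sum v_i = w$, and the translate of $K$ can be chosen so that these edges point into a face compatible with $w$, giving $\sum\|v_i\|=\|w\|$ exactly. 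Combined with $\|\cdot\|$-perim$(K)=d_e(0,s)+O(\|w\|^2/\sqrt{|s|})$, this yields $d_e(w,s)\le d_e(0,s)-\|w\|+o(1)$, uniformly in bounded $w$.

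Combining the bounds with the reduction proves the pointwise limit. For the topological disk, the map $w\mapsto f_w$ with $f_w(v,t)=\|w\|-\|w-v\|$ is $2$-Lipschitz into $C(\HH)$ for the sup-norm topology, and is injective because from $f_w$ one reads off $\|w\|=f_w(w,0)$ and then $\|w-v\|$ for every $v$, which determines $w$. The uniformity in the asymptotic upgrades pointwise convergence to locally uniform convergence, placing each $f_w$ in $\de_h\HH$ and making $w\mapsto f_w$ a topological embedding of $\R^2$. The main obstacle is the uniform upper bound: one must verify that $0$ and $w$ can be simultaneously placed on $\de(\lambda I)$ for a well-chosen translate, that the $O(\|w\|^2)$ area discrepancy can be absorbed by perturbing $\lambda$, and that the short-arc $\|\cdot\|$-length tends to $\|w\|$---the last point being the most delicate when $I$ is polygonal, since the face of $\de(\lambda I)$ carrying $0$ and $w$ must be chosen compatibly with the direction of $w$ to achieve equality in the triangle inequality $\sum\|v_i\|\ge\|w\|$.
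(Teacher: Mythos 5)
Your proposal is correct in outline, and it reaches the theorem by a genuinely different route than the paper. Both arguments hinge on the same key asymptotic, $d_e(w,s)=d_e(0,s)-\|w\|+o(1)$ uniformly for $w$ in compact sets (this is exactly Lemma~\ref{lem03251220}, since $d_e((w,0))=\|w\|$), and your reduction of the theorem to that asymptotic, including the final $\sqrt{|s_n+O(1)|}-\sqrt{|s_n|}\to0$ step, matches the paper's. The difference is in how the asymptotic is proved. For the lower bound the paper uses a one-line triangle inequality, $d_e((0,s))\le d_e((w,s))+d((w,s),(0,s))=d_e((w,s))+\|w\|$; your Dido-inequality argument (project the minimizer, close it by the chord, apply Busemann's isoperimetric theorem) is correct but is essentially a longer proof of that same inequality. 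For the upper bound the paper analyzes the actual geodesic to $(w,s)$: its projection is an arc of some $\lambda I$, and the convex-geometry Lemma~\ref{lem03251119} bounds the area of the cap cut off by the bounded chord, giving $h\ge\sqrt{t/(t+s)}(a+b)$. You instead build an explicit competitor: a translate of $\lambda\de I$ through $0$ and $w$ whose long arc lifts to an admissible curve ending at $(w,s)$, with the short arc of norm-length (asymptotically) $\|w\|$. Your construction is attractive because in the polygonal case it gives a clean quantitative bound $d_e(w,s)\le d_e(0,s)-\|w\|+O(\|w\|^2/\sqrt{|s|})$, whereas the paper's comparison argument is softer but works verbatim for every norm and avoids the placement and exact-area issues your competitor requires (adjusting $\lambda$ so the balayage area of the long arc is exactly $s$ is an intermediate-value argument you flag but do not carry out; it is fixable).

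The one point you should repair is the case split in your upper bound. The theorem is stated for an arbitrary norm, and ``smooth'' plus ``polygonal'' do not exhaust the possibilities: a general isoperimetrix can have corners together with strictly convex arcs, and your short-arc claim is only argued in the two extreme cases. The claim is nevertheless true in general, for the reason implicit in your polygonal argument: corners of $I=JQ^*$ correspond by polarity to flat faces of $Q$, and the one-sided tangent directions at such a corner are the endpoints of that face, so $\|\cdot\|$ is additive on the tangent cone there; hence the norm-length of a short boundary arc of $\lambda\de I$ converges to the norm of its chord as $\lambda\to\infty$, uniformly for chords of bounded length. Adding this duality observation (or simply switching to the paper's comparison with the true geodesic plus Lemma~\ref{lem03251119}) closes the gap. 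The concluding ``topological disk'' discussion is at the level of detail of the paper's own proof; note only that local uniform convergence is automatic since the functions $d(p_n,\cdot)-d(p_n,e)$ are $1$-Lipschitz, so no uniformity in the asymptotic is needed for that step.
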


We need a couple of lemmas before the proof of the theorem.
We start with a technical lemma concerning convex geometry. Fix $b > 0$ and an open bounded convex set $Q\subset\R^2$. Dilate $Q$  by $\lambda\ge 1$, and take two points $p,q\in\de(\lambda Q)$ so that $|p-q|\le b$. The line passing through $p$ and $q$ cuts $\lambda Q$ into two parts with areas $s$ and $t$ respectively, say $s\le t$.
Then the lemma says there is $M$ such that that $s<M$ for all $\lambda\ge 1$.  

\begin{lemma}\label{lem03251119}
	Let $Q\subset\R^2$ be an open bounded convex set.
	Fix $b>0$ and define for $\lambda\ge1$
	\begin{align*}
	\bar x(\lambda) &:= \inf\{x\in\R:L^1\{y:(x,y)\in\lambda Q\}\ge b\} ,\\
	Q^-_\lambda &:= \{(x,y)\in\lambda Q: x\le \bar x(\lambda) \} .
	\end{align*}
	Then
	\[
	\sup\{ L^2(Q^-_\lambda) : \lambda\ge1 \}<\infty ,
	\]
	where $L^1$ and $L^2$ denote the 1- and 2-dimensional Lebesgue measures, respectively.
\end{lemma}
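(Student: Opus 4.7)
The plan is to bound $L^2(Q^-_\lambda)$ by a ``horizontal extent times a uniform bound on vertical width''. Set
\[
x_{\min} := \inf\{x : \exists y \text{ with } (x,y)\in Q\}
\qquad\text{and}\qquad
w_\lambda(x) := L^1\{y : (x,y)\in \lambda Q\}.
\]
By the very definition of $\bar x(\lambda)$, the width satisfies $w_\lambda(x) < b$ for all $x<\bar x(\lambda)$. Slicing by Fubini then gives the coarse bound
\[
L^2(Q^-_\lambda) \;=\; \int_{\lambda x_{\min}}^{\bar x(\lambda)} w_\lambda(x)\,\mathrm{d}x \;\le\; b\bigl(\bar x(\lambda) - \lambda x_{\min}\bigr),
\]
so everything reduces to showing that the horizontal extent $\bar x(\lambda) - \lambda x_{\min}$ stays bounded as $\lambda\to\infty$.

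The main tool will be the classical fact that the width function of a convex planar set is concave on its support. Indeed, convexity of $\lambda Q$ gives the Minkowski-sum inclusion $tQ_{x_1}+(1-t)Q_{x_2}\subset Q_{tx_1+(1-t)x_2}$ between vertical slices, and one-dimensional Brunn--Minkowski yields concavity of $w_\lambda$ on $[\lambda x_{\min},\lambda x_{\max}]$. Pick once and for all a reference point $x_0\in(x_{\min},x_{\max})$ and set $w_0:=w_1(x_0)>0$. For every $\lambda\ge b/w_0$ we have $w_\lambda(\lambda x_0)=\lambda w_0\ge b$, hence $\bar x(\lambda)\le\lambda x_0$. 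The concavity of $w_\lambda$ on $[\lambda x_{\min},\lambda x_0]$, together with $w_\lambda(\lambda x_{\min}^+)\ge 0$ and $w_\lambda(\lambda x_0)=\lambda w_0$, gives the chord-from-below bound
\[
w_\lambda(x)\;\ge\;\frac{w_0}{x_0-x_{\min}}\bigl(x-\lambda x_{\min}\bigr)
\qquad\text{for } x\in[\lambda x_{\min},\lambda x_0].
\]
Evaluating at $x=\bar x(\lambda)$ and using $w_\lambda(\bar x(\lambda))\le b$ (by continuity of $w_\lambda$ on the interior of its support) produces
\[
\bar x(\lambda)-\lambda x_{\min}\;\le\;\frac{b(x_0-x_{\min})}{w_0},
\]
so combining with the first display yields $L^2(Q^-_\lambda)\le b^2(x_0-x_{\min})/w_0$ for all $\lambda\ge b/w_0$.

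The remaining range $\lambda\in[1,b/w_0)$ is handled by the trivial bound $L^2(Q^-_\lambda)\le L^2(\lambda Q)\le (b/w_0)^2 L^2(Q)$, which is already uniform. Taking the larger of the two constants gives a single bound valid for all $\lambda\ge 1$. I do not foresee a serious obstacle; the only mildly subtle step is pinning down the behaviour of $w_\lambda$ at the left endpoint $\lambda x_{\min}$ (where $\lambda Q$ may have a vertical edge, so $w_\lambda$ has a positive right-limit rather than vanishing), but this only helps the chord-bound, so the argument goes through either way.
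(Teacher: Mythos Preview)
Your proof is correct and follows essentially the same route as the paper's: both reduce via Fubini to bounding the horizontal extent $\bar x(\lambda)-\lambda x_{\min}$, and both control that extent using concavity of the width function together with a linear lower bound (the paper writes $V_1(x)\ge mx$ near the left endpoint, you use the chord from $(\lambda x_{\min},0)$ to $(\lambda x_0,\lambda w_0)$). Your version is in fact slightly cleaner, since the chord bound absorbs the case split the paper makes on whether $w_\lambda$ has a positive right-limit at $\lambda x_{\min}$.
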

\begin{proof}
	Since $L^2(Q^-_\lambda) \le \lambda^2 L^2(Q)$, 
	we must show that $L^2(Q^-_\lambda)$ remains bounded for $\lambda$ large.
	Taking $\lambda$ large enough, we can assume $\bar x(\lambda)<+\infty$.
	Define 
	\[
	V_\lambda(x) = L^1\{y:(x,y)\in\lambda Q\},
	\]
	and note that $V_\lambda(x) = \lambda V_1(x/\lambda)$.
	Up to translating $Q$, we can assume $V_1(x)=0$ for all $x\le 0$ and $V_1(x)>0$ for small $x>0$.
	Moreover, since $Q$ is convex, $V_1$ is a concave function.
	If $\lim_{x\to 0^+}V_1(x) >0$, then $\bar x(\lambda) =0$ and thus $L^2(Q^-_\lambda) =0 $ for $\lambda$ large.
	
	Now assuming that $\lim_{x\to 0^+}V_1(x)=0$, we have that $\bar x(\lambda)>0$ for all $\lambda$.
	By concavity, there are $\epsilon,m>0$ such that $V_1(x)\ge m x$ for all $x\in(0,\epsilon]$.
	By the definition of $\bar x$, if $x<\bar x(\lambda)$ then $V_\lambda(x) \le b$.
	For $\lambda$ large, $V_\lambda(\lambda\epsilon) = \lambda V_1(\epsilon) \ge \lambda m \epsilon \ge b$,  
	and so $\bar x(\lambda) <\lambda\epsilon$. 
	It follows that
	\[
	b > V_\lambda(\bar x(\lambda)/2) \ge m \bar x(\lambda)/2 ,
	\]
	that is, $\bar x(\lambda) \le 2b/m$.
	We conclude that 
	\[
	L^2(C^-_\lambda) = \int_0^{\bar x(\lambda)} V_\lambda(x) \dd x \le \frac{2b^2}{m} ,
	\]
	for $\lambda$ large enough.
\end{proof}

\begin{lemma}\label{lem03251220}
For any sub-Finsler metric $d$ on $\HH$ and any $v \in \R^2$,
\begin{equation}\label{eq03251818}
\lim_{t\to +\infty} \left[ d_e((v,t)) - d_e((0,t)) + d_e((v,0)) \right] = 0.
\end{equation}
Moreover, the convergence is uniform in $v$ on compact sets.
\end{lemma}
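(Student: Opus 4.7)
The plan is to sandwich $d_e((v,t))$ between $d_e((0,t)) - \|v\|$ and $d_e((0,t)) - \|v\| + o(1)$, where the $o(1)$ is uniform in $v$ on compact sets. Since the sub-Finsler distance restricted to the horizontal plane satisfies $d_e((v,0)) = \|v\|$, adding $\|v\|$ to both sides gives~\eqref{eq03251818}.

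The lower bound is immediate from left-invariance and the triangle inequality: $d((v,t),(0,t)) = d_e((v,t)\inv(0,t)) = d_e((-v,0)) = \|v\|$, so $d_e((0,t)) \le d_e((v,t)) + \|v\|$.

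For the upper bound I would construct an explicit admissible curve. Let $I \subset \R^2$ be the isoperimetrix for the norm $\|\cdot\|$, with area $A_I$ and perimeter $P_I$, so that $d_e((0,t)) = P_I\sqrt{t/A_I}$ is realized by lifting the boundary of a scaled isoperimetrix enclosing balayage area $t$. For $\lambda$ large enough (uniformly in $v$ on compact sets), one can pick a translate $\cal J_\lambda := \lambda I + c_\lambda$ whose boundary passes through both $0$ and $v$. The chord $[0,v]$ splits $\de\cal J_\lambda$ into a short arc $\sigma^-_\lambda$ and a long arc $\sigma^+_\lambda$ (oriented from $0$ to $v$); by norm-convexity of $\de(\lambda I)$ we have $\mathrm{length}(\sigma^-_\lambda) \ge \|v\|$, hence $\mathrm{length}(\sigma^+_\lambda) \le \lambda P_I - \|v\|$. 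Lifting $\sigma^+_\lambda$ based at $e$ terminates at $(v,a_\lambda)$, where $a_\lambda$ is the balayage area swept by $\sigma^+_\lambda$. Because the straight segment $[0,v]$ has zero balayage area, $a_\lambda$ equals (up to sign) $\lambda^2 A_I$ minus the Euclidean area of the lune bounded by $\sigma^-_\lambda$ and $[0,v]$. This lune is precisely a piece of $\lambda I$ cut off by a chord of length $\|v\|$, and Lemma~\ref{lem03251119} bounds its area by a constant $M = M(I,\|v\|)$ independent of $\lambda$. Thus $a_\lambda = \lambda^2 A_I + O(1)$ uniformly in $\lambda$. An intermediate value argument (varying $c_\lambda$ continuously along the one-parameter family of translates of $\lambda I$ through $0$ and $v$, and then varying $\lambda$) produces $\lambda(t)$ with $a_{\lambda(t)} = t$ and $\lambda(t)^2 A_I = t + O(1)$, so $\lambda(t) P_I = d_e((0,t)) + O(1/\sqrt t)$. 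The constructed curve therefore has length at most $\lambda(t) P_I - \|v\| = d_e((0,t)) - \|v\| + O(1/\sqrt t)$, giving the upper bound with uniform error on compact sets of $v$.

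The main obstacle is executing the intermediate value step for an arbitrary sub-Finsler norm. For strictly convex $I$ the implicit function theorem gives continuous dependence directly; for polygonal $I$ (which is the case of interest in the remainder of the paper) one has to parametrize the translates $\cal J_\lambda$ carefully, handling the configurations in which the chord $[0,v]$ sweeps across vertices of the scaled isoperimetrix. Lemma~\ref{lem03251119} is essential precisely here, since it controls the sliver area uniformly and independently of how the chord interacts with the edge--vertex structure of $\lambda I + c_\lambda$.
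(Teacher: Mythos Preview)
Your sandwich is the same as the paper's, and the geometric ingredients (isoperimetrix arcs, the chord--sliver estimate of Lemma~\ref{lem03251119}) are identical. The difference is in how you produce the arc. The paper does not \emph{construct} a curve from $0$ to $v$ with prescribed balayage area; it simply takes the actual geodesic from $e$ to $(v,t)$, which by the Dido/Busemann description recalled in \S\ref{sec5f742960} is already an arc of some $\lambda I$. This gives $a=d_e((v,t))$ as the exact arc length, and one compares directly: with $h=d_e((0,t))$, $c$ the complementary arc length, and $s$ the sliver area, one has $h=\sqrt{t/(t+s)}(a+c)\ge\sqrt{t/(t+s)}(a+b)$, whence $a+b-h\le h(\sqrt{1+s/t}-1)=O(1/\sqrt t)$ by Lemma~\ref{lem03251119}. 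The uniformity is then a one-line equicontinuity argument ($v\mapsto f_t(v)$ is $2$-Lipschitz with $f_t(0)=0$).

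So the step you flag as the ``main obstacle'' --- the intermediate value argument threading a translate of $\lambda I$ through $0$ and $v$ with exact balayage area $t$ --- is not needed at all: the existence of a length-minimizer together with the known structure of sub-Finsler geodesics hands you that arc for free. Your route would work (the IVT can be made rigorous, even in the polygonal case), but it reproves a fact already available in the paper and turns a two-paragraph argument into a construction with several case distinctions. If you want a self-contained proof that does not invoke the Dido characterization of geodesics, your approach has that merit; otherwise the paper's is strictly simpler.
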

\begin{proof}
	By the triangle inequality, we have 
	\begin{equation}\label{eq03251121}
	d_e((v,t)) - d_e((0,t)) + d_e((v,0)) \ge 0
	\end{equation}
	for all $t$.
 Let $Q^*\subset\R^2$ be the convex set dual to the unit ball $Q$ of the norm $\norm\cdot$ on $\R^2$. Let $I$ be the rotation by $\frac\pi2$ of $Q^*$.

	Define $a = a(t) =d_e((v,t))$, $b=d_e((v,0))$ and $h=h(t)=d_e((0,t))$.
	For $t$ large enough, the projection $\gamma_1:[0,1]\to\R^2$ of a geodesic from $(0,0)$ to $(v,t)$ is a portion of the boundary of $\lambda I$, for some $\lambda$, with $\gamma_1(0)=(0,0)$ and $\gamma_1(1)=v$.
	Notice that $a$ is the length of $\gamma_1$, that $b=\norm v$ is the length of a chord of $\de(\lambda I)$ and that $t$ is the area one of the two parts of $\lambda I$ separated by the line passing through $0$ and $v$.
	Let $s$ be the area of the other part and $c$ the length of $\de(\lambda I)\setminus \gamma_1$.	
	If $A$ is the area of $I$ and $\ell$ is the length of $\de I$, we have
	$a+c = \lambda\ell$ and 
	$t+s = \lambda^2 A .$ See Figure~\ref{fig:vertSeq}.
	
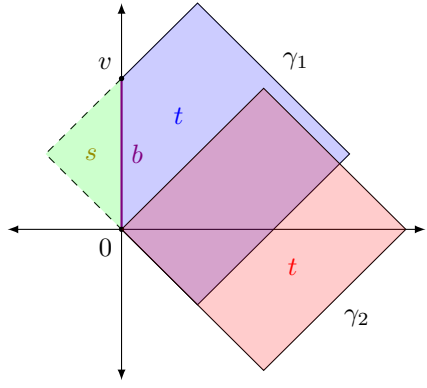
\begin{figure}[H]
\centering
\begin{tikzpicture}[>=latex,scale=.5]
\draw[->] (0,0) -- (0,6);
\draw[->] (0,0) -- (0,-4);
\draw[->] (0,0) -- (8,0);
\draw[->] (0,0) -- (-3,0);
\draw (0,0) -- (2, -2) -- (6, 2) to node[above right] {$\gamma_1$} (2, 6) to  (0, 4);
\fill[blue, opacity = .2] (0,0) -- (2, -2) -- (6, 2) -- (2, 6) -- (0, 4);
\draw[dashed] (0,4) -- (-2, 2) -- (0,0);
\fill[green, opacity = .2] (0,4) -- (-2, 2) -- (0,0);
\draw[violet, thick] (0,0) -- node[right] {$b$} (0,4);
\fill[black] (0,4) node[above left] {$v$} circle (.07cm);
\fill[black] (0,0) node[below left] {$0$} circle (.07cm);
\node[olive] at (-.8,2) {$s$};
\node[blue] at (1.5,3) {$t$};
\node[red] at (4.5,-1) {$t$};
\draw (0,0)--(3.74,-3.74) to node[below right] {$\gamma_2$} (7.48,0) -- (3.74, 3.74) -- cycle;
\fill[red, opacity = .2] (0,0)--(3.74,-3.74) -- (7.48,0) -- (3.74, 3.74) -- cycle;

\end{tikzpicture}
\caption{Convex geometry and vertical sequences}
\label{fig:vertSeq}
\end{figure}

	The projection $\gamma_2$ of a geodesic from $(0,0)$ to $(0,t)$ is the boundary of $\mu C$, for some $\mu$ so that $t=L^2(\mu C) = \mu^2 A$.
	Then $h = d_e((0,t))$ is the length of the boundary of $\mu C$.
	Therefore,
	\begin{equation}\label{eq-geq}
	h = \frac{\mu}{\lambda} (a+c) = \sqrt{\frac{t}{t+s}} (a+c) \ge \sqrt{\frac{t}{t+s}} (a+b) .
	\end{equation}
	
	By Lemma~\ref{lem03251119}, there is $M>0$ such that $s<M$ for all $t$ sufficiently large. Thus, by combining~\eqref{eq03251121} and~\eqref{eq-geq}, we see that $h(t)$ converges to $a(t) + b$, completing the first part of the proof. 
	
	For the uniform convergence, if we define 
	$f_t(v) = d_e((v,t)) - d_e((0,t)) + d_e((v,0))$,
	then by the reverse triangle inequality, $f_t:(\R^2\times\{0\},d)\to \R$ is Lipschitz, i.e.,
	\[
	|f_t(v)-f_t(w)|
	\le  d((v,t),(w,t)) + d((v,0),(w,0)) 
	= 2 d((v,0),(w,0)) ,
	\]
	and $f_t(e)=0$.
	Therefore, the pointwise convergence is uniform on compact sets.
\end{proof}

\begin{proof}[Proof of Theorem~\ref{vertSeq}]
	It suffices to consider the case when $s_n \to +\infty$. Notice that
	\begin{multline*}
	d(p_n, (v, t)) - d(p_n, e) - (\| w_n\| - \| w_n-v\|) \\
	= d_e((w_n-v,s_n-t-\omega(v,w_n)/2)) - d_e((0,s_n-t-\omega(v,w_n)/2)) + d_e((w_n-v,0)) \\
		- d_e((w_n,s_n)) + d_e((0,s_n)) - d_e((w_n,0)) \\
		+ d_e((0,s_n-t-\omega(v,w_n)/2)) - d_e((0,s_n)) ,
	\end{multline*}
	where, $\omega$ is the standard symplectic form on $\R^2$. Using Lemma~\ref{lem03251220} and the boundedness of $w_n$,
	\[
	\lim_{n\to\infty} d_e((w_n-v,s_n-t-\omega(v,w_n)/2)) - d_e((0,s_n-t-\omega(v,w_n)/2)) + d_e((w_n-v,0))
	= 0 ,
	\]
	and
	\[
	\lim_{n\to\infty} - d_e((w_n,s_n)) + d_e((0,s_n)) - d_e((w_n,0))  = 0.
	\]
	Finally, 
	\begin{multline*}
	\lim_{n\to\infty} d_e((0,s_n-t-\omega(v,w_n)/2)) - d_e((0,s_n))
	= d_e((0,1)) \lim_{n\to\infty} \left( \sqrt{s_n-t-\omega(v,w_n)/2} - \sqrt{s_n} \right) 
	=0 .
	\end{multline*}
	
	For the last statement, fix $w \in \R^2$ and set $p_n = (w, n) \in \HH$. Then $p_n \to f(v, t) = \norm w - \norm{w - v}$ in the horofunction boundary.
\end{proof}

\begin{remark}\label{rem5f742be1}
	For general homogeneous distances, Lemma~\ref{lem03251220} is not true.
	As an example, consider the function $f:\R\to\R$ defined by
	\[
	f(x) := (-1)^{k(x)} \left( 2|x| - 3^{1+k(x)} \right) ,
	\text{ where }
	k(x) := \left\lfloor \frac{\log(x)}{\log(3)} \right\rfloor ,
	\]
	which is piecewise linear with derivative $\pm2$ and satisfies $-|x|\le f(x)\le |x|$ for all $x$.
	\begin{center}
	\includegraphics[width=0.4\textwidth]{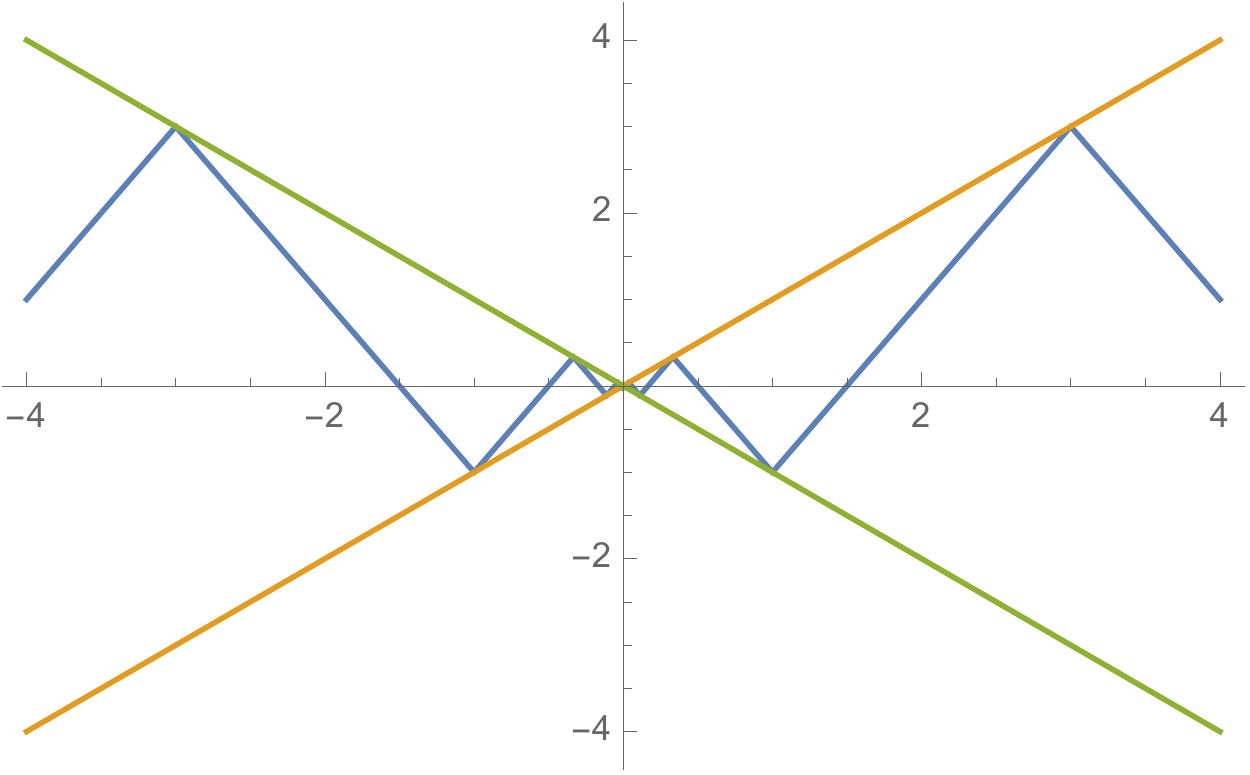}
	\end{center}
	Consider the function $\phi(v):=f(|v|)$ on the disk in $\R^2$.
	Since $\phi$ is Lipschitz, then, by \cite[Proposition 6.3]{MR3739202}, there is $M$ such that $\phi+M$ is the profile of the unit ball of a homogeneous distance $d$ in $\HH$.
	If $v\in\R^2\setminus\{0\}$, then there is a sequence $\{t_n\}_{n\in\N}$ with $t_n\to+\infty$ such that $f(\frac{|v|\sqrt{M}}{\sqrt{t_n}})=0$, i.e., $d_e((\frac{v\sqrt{M}}{\sqrt{t_n}},M))=1=d_e((0,M))$.
	Therefore,
	\[
	d_e((v,t_n)) - d_e((0,t_n)) + d_e((v,0))
	= \sqrt{\frac{t_n}{M}} \left( d_e((\tfrac{v\sqrt{M}}{\sqrt{t_n}}),M)) - d_e((0,M)) \right) + d_e((v,0)) 
	= d_e((v,0)),
	\]
	for all $n\in\N$.
	We conclude that~\eqref{eq03251818} cannot hold for such $d$.
\end{remark}


\section{Horofunctions in polygonal sub-Finsler metrics on $\HH$}\label{sec:subfinsler}
Before stating the main result of the section and the paper,
we introduce the necessary notation for the description of sub-Finsler distances in $\HH$.

\subsection{Geometry of polygonal sub-Finsler metrics}\label{coordinates}
On $\R^2$, we denote by $\langle \cdot,\cdot \rangle$ the standard scalar product,
and by $J$ the ``multiplication by $i$'', i.e., the anticlockwise rotation by $\frac\pi2$.
Notice that $\omega(\cdot,\cdot)=\langle J \cdot,\cdot \rangle$ is the standard symplectic form.
We will use the symplectic duality between $\R^2$ and $(\R^2)^*$ induced by $\omega$ 
via 
\[
\R^2\ni\alpha^\omega\leftrightarrow \alpha=\omega(\alpha^\omega,\cdot) \in(\R^2)^* .
\]

Let $Q$ be a centrally-symmetric polygon in $\R^2$ with $2N$ vertices, and let $\|\cdot\|$ be the norm on $\R^2$ with unit metric disk $Q$. 
Enumerate the vertices $\{\vv_k\}_{k}$ of $Q$ with $k\in\Z$ modulo $2N$, in an anticlockwise order.
Notice that $-\vv_{k}=\vv_{k+N}$.
Define the $k$-th edge to be the vector $e_k:=\vv_{k+1}-\vv_k$.
For each $k$, let $\alpha_k\in(\R^2)^*$ be the linear function such that $\alpha_k(\vv_k+t e_k)=1$ for all $t \in \R$, that is,
\[
\alpha_k = \frac{\omega(e_k,\cdot)}{\omega(e_k,\vv_k)} ,
\]
where $\omega(e_k,\vv_k) = \omega(\vv_{k+1},\vv_k) \neq 0$.

Let $Q^*\subset (\R^2)^*$ be the unit disk of the norm dual to $\|\cdot\|$, that is, the polar dual of $Q$. 
Note that $Q^*$ is the polygon with vertices $\{\alpha_k\}_k$.

A result of Busemann \cite{busemann-isoperimetric} tells us that the isoperimetric set $I$, or {\em isoperimetrix}, in $(\R^2,\|\cdot\|)$ is 
the image of $Q^*$ in $\R^2$ via the symplectic duality.%
\footnote{In other words, $I$ is $J^*Q^*$, seen as a subset of $\R^2$ via the equivalence $\R^2\simeq (\R^2)^*$ given by the scalar product.}
It follows that $I$ is the polygon with vertices 
\[
\alpha_k^\omega =  \frac{e_k}{\omega(e_k,\vv_k)} ,
\]
where $\omega(e_k,\vv_k)=\omega(\vv_{k+1},\vv_k)<0$,
and edges 
\[
\sigma_k := \alpha_k^\omega - \alpha_{k-1}^\omega .
\]
Figure~\ref{coords} describes the situation for a hexagonal $Q$. 
We note that $\sigma_k$ is a scalar multiple of $\vv_k$,
\begin{equation}\label{eq5f7879a4}
\sigma_k = \|\sigma_k\|\vv_k \text{ where } \|\sigma_k\| = \alpha_k(\sigma_k)>0 .
\end{equation}
Indeed, $\omega(\sigma_k,\vv_k) = \alpha_k(\vv_k) - \alpha_{k-1}(\vv_k) = 1-1=0$,
and thus $\sigma_k = \alpha_k(\sigma_k) \vv_k$.
Since $\alpha_k(\sigma_k) = \omega(\alpha_{k-1}^\omega,\alpha_k^\omega)>0$, we have $\|\sigma_k\| = \alpha_k(\sigma_k)$.

\begin{figure}[h]
\begin{tikzpicture}[scale=0.4, every node/.style={scale=0.8}]
\draw[->] (3,3) to (1.5,3);
\draw[->] (3,0) to (3,1.5);
\draw[->] (0,-3) to (1.5,-1.5);
\draw[->] (-3,-3) to (-1.5,-3);
\draw[->] (-3,0) to (-3,-1.5);
\draw[->] (0,3) to (-1.5,1.5);

\draw (3,0) -- (3,3) -- (0,3) -- (-3,0) -- (-3, -3) -- (0,-3) -- cycle;
\fill (3,0) node[right] {$\vv_1$} circle (4 pt);
\fill (3,3) node[above right] {$\vv_2$} circle (4 pt);
\fill (0,3) node[above] {$\vv_3$} circle (4 pt);
\fill (-3,0) node[left] {$\vv_4$} circle (4 pt);
\fill (-3,-3) node[below left] {$\vv_5$} circle (4 pt);
\fill (0, -3) node[below] {$\vv_6$} circle (4 pt);
\draw (3,1.5) node[right] {$ e_1$};
\draw (1.5,3) node[above] {$ e_2$};
\draw (-1.5,1.5) node[above left] {$ e_3$};
\draw (-3,-1.5) node[left] {$ e_4$};
\draw (-1.5,-3) node[below] {$ e_5$};
\draw (1.5, -1.5) node[below right] {$ e_6$};
\draw (0,0) node[scale =1.2] {$Q$};

\begin{scope}[xshift=10cm]
\draw[->] (-3,3) to (-3,1.5);
\draw[->] (0,3) to (-1.5,3);
\draw[->] (3,0) to (1.5,1.5);
\draw[->] (3,-3) to (3,-1.5);
\draw[->] (0,-3) to (1.5,-3);
\draw[->] (-3,0) to (-1.5,-1.5);

\draw (-3,3) -- (-3,0) -- (0,-3) -- (3,-3) -- (3, 0) -- (0,3) -- cycle;
\fill (3,0) node[right] {$\alpha_1$} circle (4 pt);
\fill (0,3) node[above ] {$\alpha_2$} circle (4 pt);
\fill (-3,3) node[above left] {$\alpha_3$} circle (4 pt); 
\fill (-3,0) node[left] {$\alpha_4$} circle (4 pt);
\fill (0,-3) node[below ] {$\alpha_5$} circle (4 pt);
\fill (3, -3) node[below right] {$\alpha_6$} circle (4 pt);
\draw (0,0) node[scale =1.2] {$Q^*$};
\end{scope}

\begin{scope}[xshift=20cm]
\draw[->] (3,3) to (1.5,3);
\draw[->] (3,0) to (3,1.5);
\draw[->] (0,-3) to (1.5,-1.5);
\draw[->] (-3,-3) to (-1.5,-3);
\draw[->] (-3,0) to (-3,-1.5);
\draw[->] (0,3) to (-1.5,1.5);

\draw (3,0) -- (3,3) -- (0,3) -- (-3,0) -- (-3, -3) -- (0,-3) -- cycle;
\draw (3,1.5) node[right] {$\sigma_3$};
\draw (1.5,3) node[above] {$\sigma_4$};
\draw (-1.5,1.5) node[above left] {$\sigma_5$};
\draw (-3,-1.5) node[left] {$\sigma_6$};
\draw (-1.5,-3) node[below] {$\sigma_1$};
\draw (1.5, -1.5) node[below right] {$\sigma_2$};
\draw (0,0) node[scale =1.2] {$I$};

\fill (3,0) node[right] {$\alpha^\omega_2$} circle (4 pt);
\fill (0,3) node[above left] {$\alpha^\omega_4$} circle (4 pt);
\fill (3,3) node[above right] {$\alpha^\omega_3$} circle (4 pt); 
\fill (-3,0) node[left] {$\alpha^\omega_5$} circle (4 pt);
\fill (0,-3) node[below right] {$\alpha^\omega_1$} circle (4 pt);
\fill (-3, -3) node[below left] {$\alpha^\omega_6$} circle (4 pt);

\end{scope}

\end{tikzpicture}
\caption{Example of a norm ball, dual ball, and isoperimetrix.}
\label{coords}
\end{figure}
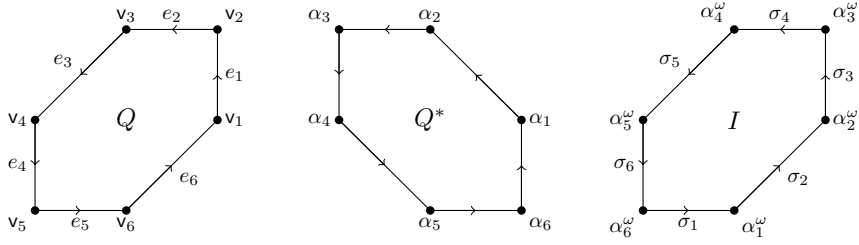

For the case of polygonal sub-Finsler metrics on $\HH$, Duchin--Mooney \cite{duchin-mooney} classify geodesics and describe the shape of the unit sphere. Here, we introduce some of their notation and summarize some key results.

Duchin--Mooney break geodesics into two categories: {\em beelines} and {\em trace paths}. 
Beeline geodesics are lifts of $(\R^2, \norm{\cdot})$-geodesics to admissible paths in $\HH$. 
Trace path geodesics, on the other hand, are lifts of paths in the plane which trace some portion of the boundary of rescaled versions of $I$.

As in Duchin--Mooney, we partition $Q$ into quadrilateral regions which are reached by trace paths which trace the same edges of $I$. 
That is, for $i < j < 2N+i$, define $Q_{ij}\subset \R^2$ to be the set of all endpoints of positively-oriented trace paths in the plane whose parametrizations start by tracing a portion of $\sigma_i$, trace all of $\sigma_{i+1},\ldots, \sigma_{j-1}$, and end by tracing a portion of $\sigma_j$, 
rescaled so that the total length is~1:
\begin{align*}
Q_{ij} 
	& := \left\{\frac1\mu(r\sigma_i + {\sigma_{i+1}} + \ldots + {\sigma_{j-1}} + s{\sigma_j}): r, s \in [0,1]\right\}, 
\end{align*}
where $\mu = \mu_{ij}(r,s) = r\norm{\sigma_i} + \norm{\sigma_{i+1}} + \ldots + \norm{\sigma_{j-1}} + s\norm{\sigma_j}$ normalizes the length of the path.
The case $i=j$ is similarly defined
\[
	Q_{ii}
	:= \left\{\frac1\mu(r\sigma_i + {\sigma_{i+1}} + \ldots + {\sigma_{i-1}} + s{\sigma_i}): r, s \in [0,1],\ r+s\le 1\right\}\bigcup \left\{\frac1\mu r\sigma_i:r\in[0,1]\right\},
\]
applying the convention that indices are modulo $2N$.
Note that $Q_{ii}=\{-s\vv_i:s\in[0,A(I_1)]\}\cup\{\vv_i\}$, where $A(I_1)$ is the area of the unit-perimeter scaled copy of $I$. Then $Q_{i,i+1}=\{s\vv_i+(1-s)\vv_{i+1}:s\in[0,1]\}$ is the $i$-th edge of $Q$ (see \cite[Theorem 7]{duchin-mooney}).
For $j\notin\{i,i+1\}$, 

the regions $Q_{ij}$ are non-degenerate quadrilaterals with disjoint interiors, and the set of all $Q_{ij}$ covers $Q$.

The unit sphere of a polygonal sub-Finsler distance
is the set of all endpoints of unit-length geodesics
and it can be described as a the region between the graphs of two functions $Q\to\R$, see Figure~\ref{fig:sphere}.
Endpoints of beeline geodesics make up vertical wall panels on the edges of $Q$: 
we denote by $\Panel_{i,i+1}$ the vertical wall panel which projects to edge $Q_{i,i+1}$, through vertices $\vv_i$ and $\vv_{i+1}$.

Endpoints of all unit-length, positively-oriented trace path geodesics make up the ceiling of the sphere:
we denote by $\Panel_{ij}^+$ the ceiling panel above $Q_{ij}$, that is the set of endpoints of lifts of all unit-length, positively-oriented trace paths whose endpoints lie in $Q_{ij}$.

\begin{figure}[ht]
\centering
\begin{tikzpicture}[>=latex]

\node at (0,0) {\includegraphics[width=1.8in]{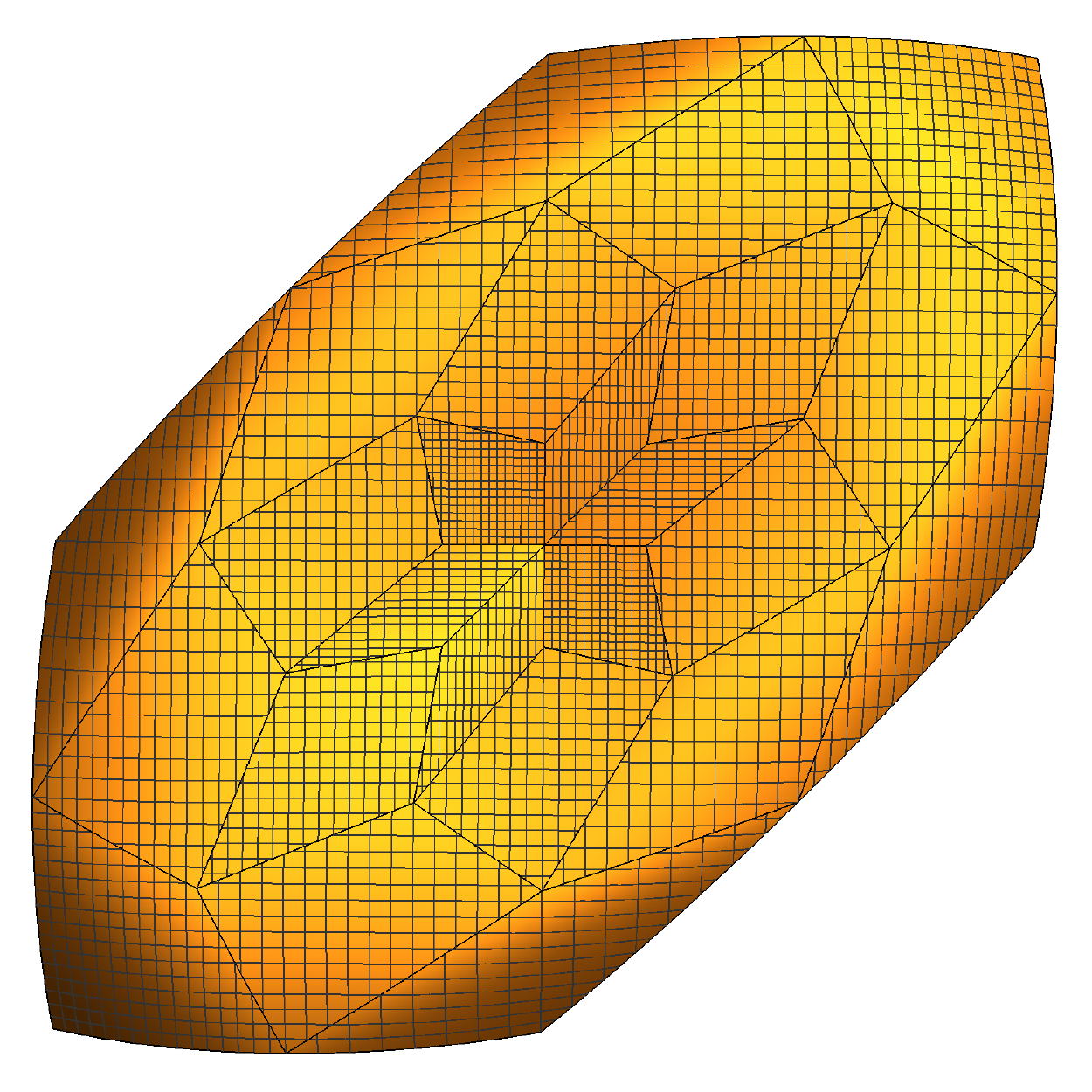}};

\node at (5,0) {\includegraphics[width=1.8in]{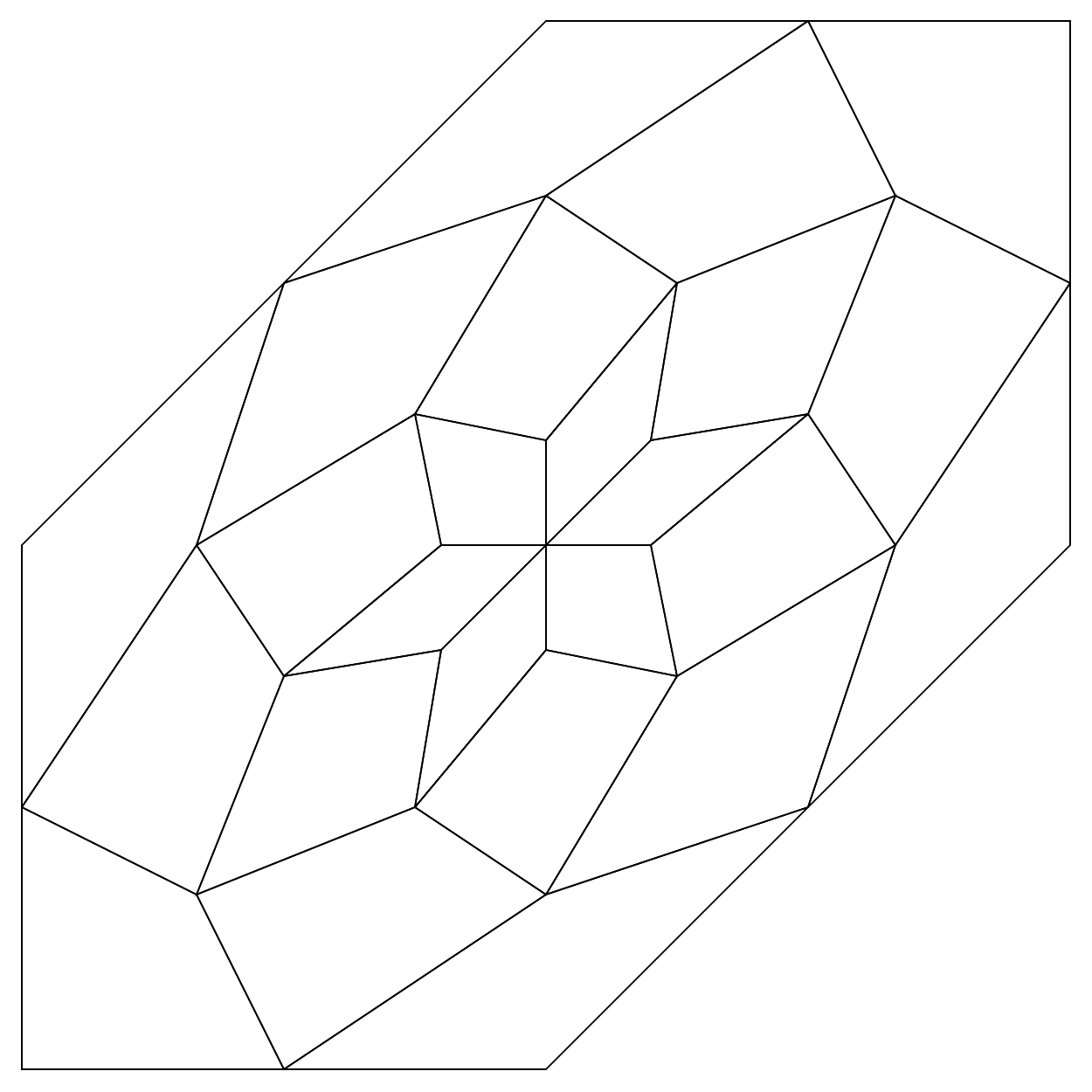}};

\node at (10.7,0) {\includegraphics[width=2.5in]{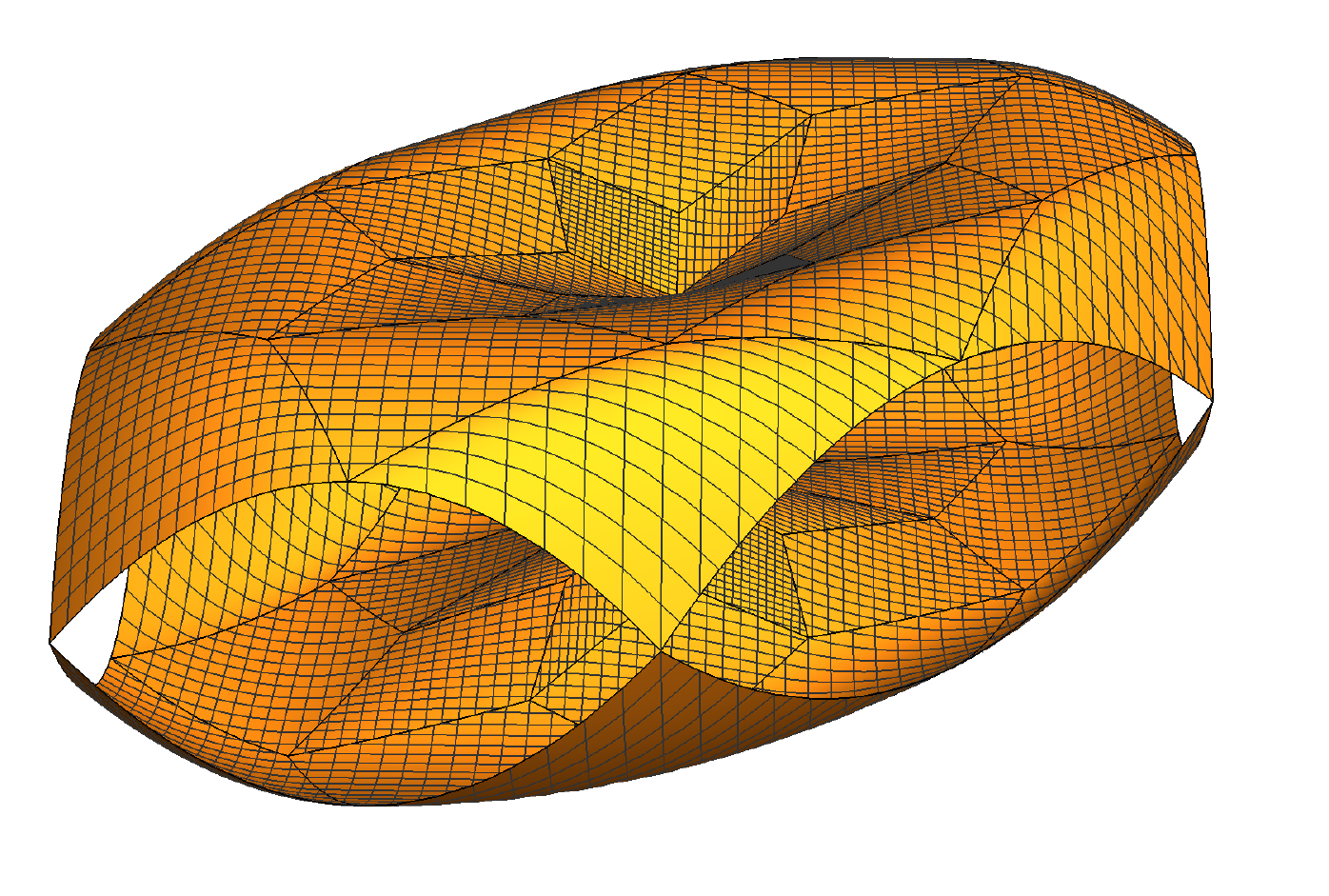}};

\begin{scope}[xshift = 7.5cm, yshift = 1.3cm]
\draw[dashed, gray] (0,0) -- (.4, 0) -- (.8, .4) -- (.8, .8) -- (.4, .8) -- (0,.4) -- cycle;
\draw[blue, thick] (.2, 0) -- (.4,0) -- (.8,.4) -- (.8,.6);
\fill[green] (.2,0) circle (1.5pt);
\fill[red] (.8,.6) circle (1.5pt);
\draw[->] (.15, .65) to[bend right = 20] (-.6, .4);
\end{scope}

\begin{scope}[xshift = 3.1cm, yshift = 1.3cm]
\draw[dashed, gray] (0,0) -- (.4, 0) -- (.8, .4) -- (.8, .8) -- (.4, .8) -- (0,.4) -- cycle;
\draw[blue, thick] (.6, .2) -- (.8,.4) -- (.8,.8) -- (.4, .8) -- (.2, .6);
\fill[green] (.6,.2) circle (1.5pt);
\fill[red] (.2,.6) circle (1.5pt);
\draw[->] (.3, -.1) to[bend right = 20] (1, -.6);
\end{scope}

\begin{scope}[xshift = 1.7cm, yshift = -1.6cm]
\draw[dashed, gray] (0,0) -- (.4, 0) -- (.8, .4) -- (.8, .8) -- (.4, .8) -- (0,.4) -- cycle;
\draw[blue, thick] (.8,.6) -- (.8,.8) -- (.4, .8) -- (0,.4) -- (0,0) -- (.2, .0);
\fill[green] (.8,.6) circle (1.5pt);
\fill[red] (.2,0) circle (1.5pt);
\draw[->] (.6, .1) to[bend right = 20] (2.3, .6);
\end{scope}

\begin{scope}[xshift = 6.2cm, yshift = -1.8cm]
\draw[dashed, gray] (0,0) -- (.4, 0) -- (.8, .4) -- (.8, .8) -- (.4, .8) -- (0,.4) -- cycle;
\draw[blue, thick] (.6,.8) -- (.4,.8) -- (0, .4) -- (0,0) -- (.4,0) -- (.8,.4) -- (.8,.6);
\fill[green] (.6,.8) circle (1.5pt);
\fill[red] (.8,.6) circle (1.5pt);
\draw[->] (.6, .9) to[bend right = 20] (-1, 1.5);
\end{scope}

\end{tikzpicture}
\caption{On the left and right, two views of a unit sphere with vertical walls missing. In the center, the norm ball $Q$ broken up into quadrilaterals $Q_{ij}$ of points reached by trace paths of the same shape. Figures adapted from \cite{duchin-mooney}.}
\label{fig:sphere}
\end{figure}

It will be useful to have an explicit description of these panels.
Fix a non-degenerate quadrilateral region $Q_{ij}$ and define $u=u_{ij}:[0,1]^2\to Q_{ij}$ by

\begin{equation}\label{u-rs}
u(r,s) 
= \frac{(r\sigma_i + \sigma_{ij} + s{\sigma_j})}{\mu} ,
\end{equation}
where $\sigma_{ij} := \sum_{i<k<j} \sigma_k$ and $\mu$ is again $r\norm{\sigma_i} + \norm{\sigma_{i+1}} +\ldots + \norm{\sigma_{j-1}} + s\norm{\sigma_j}$. Given $(r,s) \in[0,1]^2$, the point $u(r,s)$ is the endpoint of a trace path ending in $Q_{ij}$.
We then define $\phi=\phi_{ij}:[0,1]^2\to[0,+\infty)$ to be the height of the geodesic lift of the trace path implicitly described by $u(r,s)$; that is $\phi(r,s)$ is the balayage area spanned by the curve:
\footnote{
To help the reader, we remark that, if $\gamma:[0,1]\to\R^2$ is a piecewise affine curve tracing the vectors $u_0,u_1,\dots,u_n\in\R^2$, then the balayage area spanned by $\gamma$ is 
\[
\frac12 \sum_{0\le a<b\le n} \omega(u_a,u_b) .
\]
}
\begin{equation}\label{phi-rs}
\phi(r,s) = \frac{1}{2\mu^2}\left(\sum_{i<k<j}\omega(r\sigma_i, \sigma_k) + \omega(r\sigma_i, s\sigma_j) + \sum_{i<k_1<k_2<j}\omega(\sigma_{k_1},\sigma_{k_2})+ \sum_{i<k<j}\omega(\sigma_k,s\sigma_j)\right).
\end{equation}
The map $[0,1]^2\to\HH$, $(r,s)\mapsto(u_{ij}(r,s),\phi_{ij}(r,s))$ is then a parametrization of $\Panel^+_{ij}$.
Similarly, the map $[0,1]^2\to\HH$, $(r,s)\mapsto(u_{ij}(r,s),-\phi_{ij}(r,s))$ is a parametrization of $\Panel^-_{ij}$, the part of the basement of $\de B$ that projects onto $Q_{ij}$, see \cite[Theorem 10]{duchin-mooney}. 
Note that this parametrization of the basement panel does not encode the combinatorial information of the trace path geodesic that leads there.

\begin{remark}\label{rem5f784653}
	We will prove most of our results for ceiling points.
	The basement case is then derived via the involutive automorphism 
	$\flip:\HH\to\HH$, $\flip(v,t)=(\flip(v),-t)$,
	where $\flip:\R^2\to\R^2$ is the linear transformation that maps $\vv_{2N} $ to itself and 
	flips the line orthogonal to $\vv_{2N}$. 
	Notice that $\flip^2=\Id$ and $\flip^*\omega = -\omega$.
	
	The basement of the unit ball $B$ of $d$ is mapped to the ceiling of the unit ball $B^\flip$ of the new distance $d^\flip(x,y):=d(\flip(x),\flip(y))$.
	The distance $d^\flip$ is again sub-Finsler with unit disk $Q^\flip=\flip(Q)$. 
	The polygon $Q^\flip$ has vertices 
	$\vv^\flip_k:=\flip(\vv_{-k})$, in anti-clockwise order,
	and edges $e^\flip_k = \flip(-e_{-k-1})$.
	It follows that 
	\[
	\alpha^\flip_{k}\circ\flip = - \alpha_{-k-1} ,
	\]
	hence $\alpha^{\flip,\omega}_k = \flip(\alpha^\omega_{-k-1})$ 
	and  $\sigma^\flip_k = -\flip(\sigma_{-k})$ with $\|\sigma^\flip_k\|^\flip=\|\sigma_{-k}\|$.
	See Figure~\ref{fig5f8deb10}.
	Therefore, if $i<j$, then 
	\[
	u_{-j,-i}^\flip(s,r) = - \flip(u_{ij}(r,s)) .
	\]
	So, if $p\in\de B$ lies in the basement and $\pi(p)= u_{ij}(r,s)$, then $\flip(p)\in\de B^\flip$ lies in the ceiling and $\pi(\flip(p))=-u_{-j,-i}^\flip(s,r)=u^\flip_{-j+N,-i+N}(s,r)$. 
	See Figure~\ref{fig:tracepaths}.
	Finally, for all $p,v\in\HH$, we have
	\[
	\pD d^\flip_e|_{\flip(p)}[\flip(v)] = \pD d_e|_p[v] .
	\]
\end{remark}

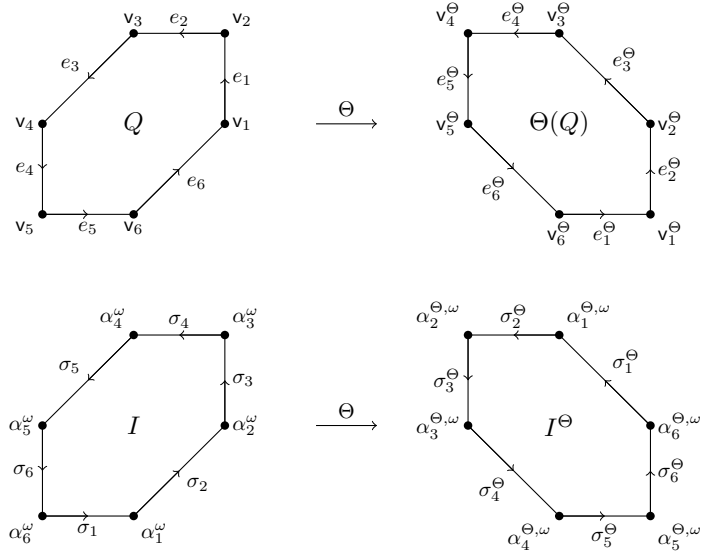
\begin{figure}[h]
\begin{tikzpicture}[scale=0.4, every node/.style={scale=0.8}]

\begin{scope}
\draw[->] (3,3) to (1.5,3);
\draw[->] (3,0) to (3,1.5);
\draw[->] (0,-3) to (1.5,-1.5);
\draw[->] (-3,-3) to (-1.5,-3);
\draw[->] (-3,0) to (-3,-1.5);
\draw[->] (0,3) to (-1.5,1.5);

\draw (3,0) -- (3,3) -- (0,3) -- (-3,0) -- (-3, -3) -- (0,-3) -- cycle;
\fill (3,0) node[right] {$\vv_1$} circle (4 pt);
\fill (3,3) node[above right] {$\vv_2$} circle (4 pt);
\fill (0,3) node[above] {$\vv_3$} circle (4 pt);
\fill (-3,0) node[left] {$\vv_4$} circle (4 pt);
\fill (-3,-3) node[below left] {$\vv_5$} circle (4 pt);
\fill (0, -3) node[below] {$\vv_6$} circle (4 pt);
\draw (3,1.5) node[right] {$ e_1$};
\draw (1.5,3) node[above] {$ e_2$};
\draw (-1.5,1.5) node[above left] {$ e_3$};
\draw (-3,-1.5) node[left] {$ e_4$};
\draw (-1.5,-3) node[below] {$ e_5$};
\draw (1.5, -1.5) node[below right] {$ e_6$};
\draw (0,0) node[scale =1.2] {$Q$};
\end{scope}

\begin{scope}[yshift=-10cm]
\draw[->] (3,3) to (1.5,3);
\draw[->] (3,0) to (3,1.5);
\draw[->] (0,-3) to (1.5,-1.5);
\draw[->] (-3,-3) to (-1.5,-3);
\draw[->] (-3,0) to (-3,-1.5);
\draw[->] (0,3) to (-1.5,1.5);

\draw (3,0) -- (3,3) -- (0,3) -- (-3,0) -- (-3, -3) -- (0,-3) -- cycle;
\draw (3,1.5) node[right] {$\sigma_3$};
\draw (1.5,3) node[above] {$\sigma_4$};
\draw (-1.5,1.5) node[above left] {$\sigma_5$};
\draw (-3,-1.5) node[left] {$\sigma_6$};
\draw (-1.5,-3) node[below] {$\sigma_1$};
\draw (1.5, -1.5) node[below right] {$\sigma_2$};
\draw (0,0) node[scale =1.2] {$I$};

\fill (3,0) node[right] {$\alpha^\omega_2$} circle (4 pt);
\fill (0,3) node[above left] {$\alpha^\omega_4$} circle (4 pt);
\fill (3,3) node[above right] {$\alpha^\omega_3$} circle (4 pt); 
\fill (-3,0) node[left] {$\alpha^\omega_5$} circle (4 pt);
\fill (0,-3) node[below right] {$\alpha^\omega_1$} circle (4 pt);
\fill (-3, -3) node[below left] {$\alpha^\omega_6$} circle (4 pt);

\end{scope}

\begin{scope}[xshift=7cm,yshift=0cm]
\draw[->] (-1,0) to (1,0) ;
\draw (0,0) node[above] {$\flip$};
\end{scope}
\begin{scope}[xshift=7cm,yshift=-10cm]
\draw[->] (-1,0) to (1,0) ;
\draw (0,0) node[above] {$\flip$};
\end{scope}

\begin{scope}[xshift=14cm]
\draw[->] (-3,3) to (-3,1.5);     
\draw[->] (-3,0) to (-1.5,-1.5);
\draw[->] (0,-3) to (1.5,-3);
\draw[->] (3,-3) to (3,-1.5);
\draw[->] (3,0) to (1.5,1.5);
\draw[->] (0,3) to (-1.5,3);

\draw (-3,0) -- (-3,3) -- (0,3) -- (3,0) -- (3, -3) -- (0,-3) -- cycle;
\fill (-3,0) node[left] {$\vv^\flip_5$} circle (4 pt);
\fill (-3,3) node[above left] {$\vv^\flip_4$} circle (4 pt);
\fill (0,3) node[above] {$\vv^\flip_3$} circle (4 pt);
\fill (3,0) node[right] {$\vv^\flip_2$} circle (4 pt);
\fill (3,-3) node[below right] {$\vv^\flip_1$} circle (4 pt);
\fill (0, -3) node[below] {$\vv^\flip_6$} circle (4 pt);
\draw (-3,1.5) node[left] {$ e^\flip_5$};
\draw (-1.5,3) node[above] {$ e^\flip_4$};
\draw (1.5,1.5) node[above right] {$ e^\flip_3$};
\draw (3,-1.5) node[right] {$ e^\flip_2$};
\draw (1.5,-3) node[below] {$ e^\flip_1$};
\draw (-1.5, -1.5) node[below left] {$ e^\flip_6$};
\draw (0,0) node[scale =1.2] {$\flip(Q)$};
\end{scope}

\begin{scope}[xshift=14cm,yshift=-10cm]
\draw[->] (-3,3) to (-3,1.5);  
\draw[->] (-3,0) to (-1.5,-1.5);
\draw[->] (0,-3) to (1.5,-3);
\draw[->] (3,-3) to (3,-1.5);
\draw[->] (3,0) to (1.5,1.5);
\draw[->] (0,3) to (-1.5,3);

\draw (-3,0) -- (-3,3) -- (0,3) -- (3,0) -- (3, -3) -- (0,-3) -- cycle;
\draw (-3,1.5) node[left] {$\sigma^\flip_3$};
\draw (-1.5,3) node[above] {$\sigma^\flip_2$};
\draw (1.5,1.5) node[above right] {$\sigma^\flip_1$};
\draw (3,-1.5) node[right] {$\sigma^\flip_6$};
\draw (1.5,-3) node[below] {$\sigma^\flip_5$};
\draw (-1.5, -1.5) node[below left] {$\sigma^\flip_4$};
\draw (0,0) node[scale =1.2] {$I^\flip$};

\fill (-3,0) node[left] {$\alpha^{\flip,\omega}_3$} circle (4 pt);
\fill (-3,3) node[above left] {$\alpha^{\flip,\omega}_2$} circle (4 pt); 
\fill (0,3) node[above right] {$\alpha^{\flip,\omega}_1$} circle (4 pt);
\fill (3,0) node[right] {$\alpha^{\flip,\omega}_6$} circle (4 pt);
\fill (3, -3) node[below right] {$\alpha^{\flip,\omega}_5$} circle (4 pt);
\fill (0,-3) node[below left] {$\alpha^{\flip,\omega}_4$} circle (4 pt);

\end{scope}

\end{tikzpicture}
\caption{Example of a norm ball and isoperimetrix, with their transformations under $\flip$.}
\label{fig5f8deb10}
\end{figure}

\begin{figure}[H]
\begin{tikzpicture}[scale=0.4, every node/.style={scale=0.8}]

\begin{scope}
\draw[gray, ->] (-1,-3) -- (6,-3);
\draw[gray,-] (-1,-3) -- (-4,-3);
\draw[gray,->] (-1,-3) -- (-1,5);
\draw[gray,-] (-1,-3) -- (-1,-5);
\fill[blue, opacity=.3] (-1, -3) -- (1,-3) -- (4,0) -- (4,3) -- (1,3) -- (0,2) -- cycle;
\draw[dashed] (0,2) -- (-2, 0) -- (-2,-3) -- (-1,-3);
\draw (-1, -3) -- node[below] {$r\sigma_1$}  (1,-3) --node[below right] {$\sigma_2$}  (4,0) -- node[right] {$\sigma_3$} (4,3) --node[above] {$\sigma_4$}  (1,3) -- node[above left] {$s\sigma_5$}  (0,2);
\draw[->] (-1,-3) -- (0,-3);
\draw[->] (1,-3) -- (2.5,-1.5);
\draw[->]  (4,0) -- (4, 1.5);
\draw[->]  (4,3) -- (2.5, 3);
\draw[->]  (1,3) -- (.5,2.5);
\draw (-1,-3) -- (4,0);
\draw (-1,-3) -- (4,3);
\draw (-1,-3) -- (1,3);
\draw (-1,-3) -- (0,2);
\fill (4,0) circle (4 pt);
\fill (4,3) circle (4 pt);
\fill (1, -3) circle (4 pt);
\fill (1, 3) circle (4 pt);
\fill[green] (-1,-3) circle (4 pt);
\fill[red] (0,2)  circle (4 pt);
\end{scope}

\begin{scope}[xshift = 14cm, yshift = 2cm]
\draw[gray,->] (0,0) -- (6,0);
\draw[gray,-] (0,0) -- (-4,0);
\draw[gray,-] (0,0) -- (0,-7);
\draw[gray,->] (0,0) -- (0,3);
\fill[red, opacity=.3] (0,0) -- (1,1) -- (4,1) -- (4, -2) -- (1, -5) -- (-1, -5);
\draw[dashed] (-1,-5) -- (-2,-5) -- (-2,-2) -- (0,0);
\draw (0,0) -- node[above left,fill=white] {$ -s\sigma_5$} (1,1) -- node[above] {$-\sigma_4$} (4,1) -- node[right] {$-\sigma_3$} (4, -2) -- node[below right] {$-\sigma_2$} (1, -5) -- node[below, fill=white] {$ -r\sigma_1$} (-1, -5);
\draw[->] (0,0) -- (.5,.5);
\draw[->] (1,1) -- (2.5,1);
\draw[->]  (4,1) -- (4, -.5);
\draw[->]  (4,-2) -- (2.5, -3.5);
\draw[->]  (1,-5) -- (0,-5);
\draw (0,0) -- (4,1);
\draw (0,0) -- (4,-2);
\draw (0,0) -- (1,-5);
\draw (0,0) -- (-1,-5);
\fill (1,1) circle (4 pt);
\fill (4,1) circle (4 pt);
\fill (4, -2) circle (4 pt);
\fill (1, -5) circle (4 pt);
\fill[green] (0,0) circle (4 pt);
\fill[red] (-1,-5)  circle (4 pt);
\end{scope}

\begin{scope}[xshift = 28cm]
\draw[gray, -] (1,-3) -- (-6,-3);
\draw[gray,->] (1,-3) -- (4,-3);
\draw[gray,->] (1,-3) -- (1,5);
\draw[gray,-] (1,-3) -- (1,-5);
\fill[red, opacity=.3] (1, -3) -- (-1,-3) -- (-4,0) -- (-4,3) -- (-1,3) -- (0,2) -- cycle;
\draw[dashed] (0,2) -- (2, 0) -- (2,-3) -- (1,-3);
\draw (1, -3) -- node[below] {$r\flip(\sigma_1)=r\sigma^\flip_{-1}$}  
	(-1,-3) --node[below left] {$\sigma^\flip_{-2}$}  
	(-4,0) -- node[left] {$\sigma^\flip_{-3}$} 
	(-4,3) --node[above] {$\sigma^\flip_{-4}$}  
	(-1,3) -- node[above right] {$s\sigma^\flip_{-5}$}  
	(0,2);
\draw[->] (1,-3) -- (0,-3);
\draw[->] (-1,-3) -- (-2.5,-1.5);
\draw[->]  (-4,0) -- (-4, 1.5);
\draw[->]  (-4,3) -- (-2.5, 3);
\draw[->]  (-1,3) -- (-.5,2.5);
\draw (1,-3) -- (-4,0);
\draw (1,-3) -- (-4,3);
\draw (1,-3) -- (-1,3);
\draw (1,-3) -- (0,2);
\fill (-4,0) circle (4 pt);
\fill (-4,3) circle (4 pt);
\fill (-1, -3) circle (4 pt);
\fill (-1, 3) circle (4 pt);
\fill[green] (1,-3) circle (4 pt);
\fill[red] (0,2)  circle (4 pt);
\end{scope}

\end{tikzpicture}
\caption{Three trace paths with similar combinatorics whose lifts end at ceiling point $p = (u, \phi(u))$, basement point $p\inv = (-u, -\phi(u))$, and the image $\flip(p)$ of $p$ under the involution $\flip$, respectively. Note that the trace path of $p\inv$ has the reverse parametrization as that of $p$.}
\label{fig:tracepaths}
\end{figure}
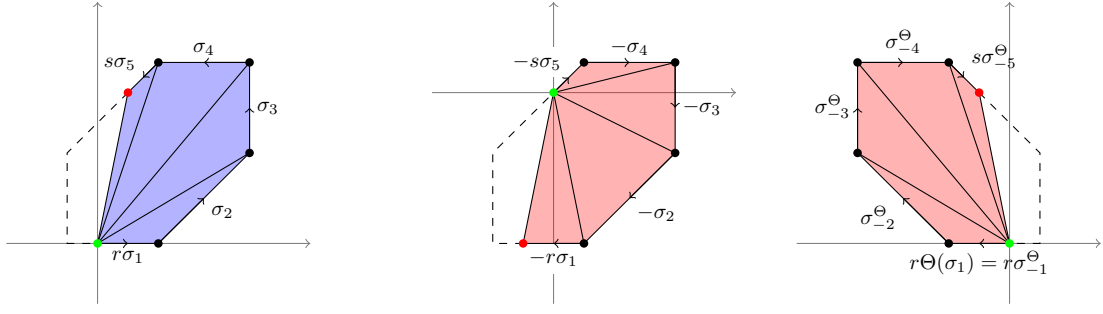

\subsection{The theorem}
Let $d$ be a polygonal sub-Finsler metric on the Heisenberg group $\HH$. 
The fundamental lemma identifying horofunctions with Pansu derivatives (Lemma~\ref{lem:pansuderiv}) applies in this case, 
but we need to take care in describing all possible blow-ups of the distance function at points on the sphere.

These blow-ups take two forms.
As we will explain below, $\HH$ is partitioned so that the function $d_e$ is $C^\infty$ in the interior of each region.
So, on the one hand, we have the points of the unit sphere where $d_e$ is smooth, and thus the only blow-up is the Pansu derivative of $d_e$. 
On the other hand, on the non-smooth part of the unit sphere, which we call the {\em seam}, the blow-up of $d_e$ is defined piecewise as in Theorem~\ref{piecetogether}.

The unit sphere of $d$ is made of smooth and non-smooth points.
Smooth points are the interior points of the panels on ceiling, basement, and walls.
Non-smooth points are on the seams between those panels, that is: 
north and south poles,
star-like seams near the north and south poles,
and seams between ceiling or basement and wall panels,
vertices of $Q$.
See Figure \ref{fig:seams} for the seams along a hexagonal unit sphere: each type of seam point intersects different combinations of panel dilation cones and hence provides a different kind of blow-up function.
We will study the blow-ups of the distance function $d_e$ in each case separately.
The results are summarized in the following theorem.

\begin{figure}[ht]
\centering
\begin{tikzpicture}[>=latex,scale=.7]

\node at (0,0) {\includegraphics[width=1.75in]{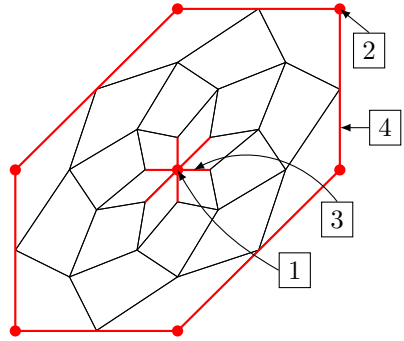}};

\draw[red, thick] (3.05, 3.05) -- (3.05, 0) -- (0, -3.05) -- (-3.05, -3.05) -- (-3.05, 0) -- (0, 3.05) -- cycle;

\foreach \x in {(3.05, 3.05), (3.05, 0), (0, -3.05), (-3.05, -3.05), (-3.05, 0),(0, 3.05),(0,0)}
	\fill[red] \x circle (3 pt);

\draw[red, thick] (-.61,0) -- (.61,0)
   (0,.61) -- (0, -.61)
   (.61, .61) -- (-.61,-.61); 

\draw[->] (1.9,-1.9) to[bend left = 15] (0,0);
\node [draw, right] at (1.9,-1.9)  {$1$};

\draw[->] (3,-.6) to[bend right = 40] (.3,0);
\node [draw, below] at (3,-.6)  {$3$};

\draw[->] (3.6, .8) to[bend left = 0] (3.05,.8);
\node [draw, right] at (3.6,.8)  {$4$};

\draw[->] (3.6,2.6) to[bend right = 0] (3.05, 3.05);
\node [draw, below] at (3.6,2.6)  {$2$};

\end{tikzpicture}
\caption{A top-down view of the four types of seams, in red, on a hexagonal unit sphere; 1)~north and south pole; 2)~vertices; 3)~star-like seams near poles; 4)~wall seams.}
\label{fig:seams}
\end{figure}

\begin{theorem}[Blow-ups of $d_e$]\label{thm5f7476ff}
	Using the notation of Section~\ref{coordinates},
	the blow-ups of $d_e$ at a point $p$ on the sphere of $d$ fall in one of the following cases.

	\vspace{4pt}
	\textbf{Smooth points:}
	\begin{enumerate}[leftmargin=*,label=($S$\arabic*)]
	\item If $p$ is in the interior of $\Panel^+_{ij}$ such that the $\pi(p) = u_{ij}(r,s)$, then
		the Pansu derivative of $d_e$ exists at $p$ and 
		\[
		\pD d_e\vert_p(v,t) = ((1-s)\alpha_{j-1}+ s\alpha_j)(v).
		\]
	\item If $p$ is in the interior of $\Panel^-_{ij}$ such that the $\pi(p) = u_{ij}(r,s)$, then
		the Pansu derivative of $d_e$ exists at $p$ and
		\[
		\pD d_e\vert_p(v,t) =  ((1-r)\alpha_{i}+ r\alpha_{i-1})(v).
		\]
	\item If $p$ is in the interior of $\Panel_{i,i+1}$, then
		\[
		\pD d_e\vert_p(v,t) = \alpha_i(v).
		\]
	\end{enumerate}
	\textbf{Non-smooth points:}
	\begin{enumerate}[leftmargin=*,label=($\not{}\! S$\arabic*)]
	\item North and south poles
		\begin{enumerate}
		\item For $w\in\R^2$,
			\[f(v,t) = \norm{w} - \norm{w-v} ; \]
		\item For $C \in \R$ and $i \in \{1,\ldots, 2N\}$
			\[ 
			f(v,t) = \begin{cases}
				\alpha_i(v) +c_1& \omega(\vv_i,v) \leq C\\
				\alpha_{i-1}(v) + c_2 & \omega(\vv_i,v) > C
				\end{cases}
				;
			\]
		\item For $i \in \{1,\ldots, 2N\}$ (corresponding to $C\in \{-\infty,+\infty\}$)
			\[
			f(v,t) = \alpha_i(v) ;
			\]
		\end{enumerate}
	\item $i$-th vertex of $Q$, $i \in \{1,\ldots, 2N\}$, for $C \in \R\cup \{-\infty,+\infty\}$:
			\[ 
			f(v,t) = \begin{cases}
				\alpha_i(v) +c_1& \omega(\vv_i,v) \geq C\\
				\alpha_{i-1}(v) + c_2 & \omega(\vv_i,v) < C
				\end{cases}
				;
			\]
	\item Star-like seams
		\begin{enumerate} 
		\item Near the north pole, for $C \in \R\cup \{-\infty,+\infty\}$ and $s \in (0,1]$:
		\[
		f(v,t) = \begin{cases}
		\alpha_{i-1}(v)+ c_1 & \omega(\vv_i,v) \geq C\\
		((1-s)\alpha_{i-1} + s\alpha_{i})(v) + c_2& \omega(\vv_i,v) < C
		\end{cases}
		\]
		\item Near the south pole, for $C \in \R\cup \{-\infty,+\infty\}$ and $s \in (0,1]$:
		\[
		f(v,t) = \begin{cases}
		\alpha_{i}(v) + c_1& \omega(\vv_i,v) \leq C\\
		((1-s)\alpha_{i} + s\alpha_{i-1})(v) + c_2& \omega(\vv_i,v) > C
		\end{cases}
		\]
		\end{enumerate}
	\item Wall seams
		\begin{enumerate}
		\item Between wall and ceiling panels, for $C \in \R\cup \{-\infty,+\infty\}$ and $s \in (0,1]$:
		\[
		f(v,t) = \begin{cases}
		\alpha_{i-1}(v)+ c_1 & \omega( \vv_i,v) \leq C \\
		((1-s)\alpha_{i-1} + s\alpha_{i})(v) + c_2& \omega(\vv_i,v) > C
		\end{cases}
		\]
		\item Between wall and basement panels, for $C \in \R\cup \{-\infty,+\infty\}$ and $s \in (0,1]$:
		\[
		f(v,t) = \begin{cases}
		\alpha_i(v)+ c_1 & \omega(\vv_i,v) \geq C \\
		((1-s)\alpha_{i} + s \alpha_{i-1})(v)+ c_2 & \omega(\vv_i,v) < C
		\end{cases}
		\]
		\end{enumerate}
	\end{enumerate} 
\end{theorem}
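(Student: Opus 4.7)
My plan is to split the sphere into smooth panel interiors and non-smooth seams, handled by two different mechanisms: at a smooth point, identify $\pD d_e|_p$ directly from the Duchin--Mooney parametrization; at a seam, blow up the sphere locally as an intersection of half-spaces and piece together the resulting horofunction via Theorem~\ref{piecetogether}.

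For the smooth cases (S1)--(S3), the Duchin--Mooney parametrization~\eqref{u-rs}--\eqref{phi-rs} is a local diffeomorphism in the interior of each panel, so dilating radially shows $d_e$ is smooth there. A Pansu derivative is a homogeneous morphism $\HH\to\R$, hence vanishes on $V_2$ and has the form $(v,t)\mapsto\alpha(v)$ for some $\alpha\in(\R^2)^*$; Proposition~\ref{prop5f72ae06} forces $\alpha\in\partial Q^*$. The correct edge of $\partial Q^*$ is pinned down by the sub-Finsler geodesic--covector correspondence: a trace-path traversing edge $\sigma_k\subset\partial I$ has synchronized dual covector traversing the matching edge $[\alpha_{k-1},\alpha_k]\subset\partial Q^*$. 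For a ceiling point with $\pi(p)=u_{ij}(r,s)$ the trace ends at fraction $s$ of $\sigma_j$, giving $\alpha=(1-s)\alpha_{j-1}+s\alpha_j$ as in~(S1); case~(S2) follows by applying the involution $\flip$ of Remark~\ref{rem5f784653}, which reverses trace-path orientation and swaps the roles of the initial and terminal covectors; and~(S3) is a direct beeline computation yielding $\alpha=\alpha_i$. If one prefers to bypass the Pontryagin correspondence, the same formulas drop out of a routine differentiation of the parametrization $(r,s,\lambda)\mapsto(\lambda u_{ij}(r,s),\lambda^2\phi_{ij}(r,s))$, reading off $\partial_v\lambda$ at $(r,s,1)$.

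For the non-smooth cases, the scheme is uniform. Near a seam $p$, write $B=\bigcap_k\{F_k\le 0\}$ locally, where each $F_k$ is a smooth extension of $d_e$ from a panel adjacent to $p$; the Pansu derivatives $\pD F_k|_p$ are exactly the linear functionals from~(S1)--(S3). Proposition~\ref{prop5e8c49ee} then describes every blow-up $\BU(B,\{p_n\},\{\epsilon_n\})$ as a cone $\bigcap_k\{\pD F_k|_p\le t_k\}$ with thresholds $t_k\in\R\cup\{\pm\infty\}$ recorded by the ratios $d(p_n,\{F_k\le 0\})/\epsilon_n$, and Proposition~\ref{prop5e8c4f94} shows each choice of $t_k$ is realized by a suitable sequence. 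Applying Theorem~\ref{piecetogether} to $d_e$ then produces a piecewise-affine horofunction whose pieces are the $\pD F_k|_p$ glued along vertical half-planes $\{\omega(\vv_i,\cdot)=C\}$; the parameter $s\in(0,1]$ appearing in the star-like and wall-seam formulas records the position along the seam between two adjacent panels, and case~($\not{}\!S 1$)(a) recovers the topological disk of Theorem~\ref{vertSeq} as the vertical-sequence limit.

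The main obstacle is the combinatorial enumeration: for each of the four seam types (poles, vertices, star-like seams, wall seams) one must list the adjacent panels, extract their Pansu derivatives from~(S1)--(S3), and verify that precisely the parameter regimes of the theorem are realized. The most delicate case is the star-like seams, where infinitely many ceiling (resp.\ basement) panels touch the pole; one must argue that only two adjacent panels contribute non-trivially to a generic blow-up, while the remainder impose either vacuous or degenerate constraints, controlled by the thresholds $C=\pm\infty$ and by the radial parameters $r,s$ degenerating to $0$ or $1$.
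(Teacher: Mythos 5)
Your overall architecture (identify $\pD d_e$ on panel interiors, then combine Propositions~\ref{prop5e8c49ee} and~\ref{prop5e8c4f94} with Theorem~\ref{piecetogether} along the seams) is the paper's, but two of your key steps have genuine gaps. For the smooth cases, the ``geodesic--covector correspondence'' you invoke --- that the Pansu derivative of $d_e$ at a ceiling point equals the terminal dual covector of the trace path, synchronized along $\de Q^*$ --- is precisely the content of ($S$1); it is not available off the shelf for polygonal sub-Finsler metrics, and your fallback (``read off $\partial_v\lambda$'' from the parametrization) is only a sketch. The paper proves it in Proposition~\ref{ceilingPD} by pinning $A=\pD d_e|_p$ with two constraints, $A(\vv_j)=1$ from extending the trace-path geodesic beyond $p$, and $A(\hat w)=0$ for the explicitly computed direction $\hat w\in T_p\de B\cap\Delta_p$, followed by a nontrivial verification that $(1-s)\alpha_{j-1}+s\alpha_j$ satisfies both; the basement then follows by the involution $\flip$ of Remark~\ref{rem5f784653} and the wall case is Proposition~\ref{wallPD}. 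Without some substitute for that verification, your ($S$1)--($S$2) are assertions rather than proofs, even though the formulas you state are correct.

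For the non-smooth cases your mechanism, taken literally, is mis-specified. You blow up $B=\bigcap_k\{F_k\le 0\}$ with each $F_k$ a smooth extension of $d_e$ from an adjacent panel, and claim the blow-up cones are $\bigcap_k\{\pD F_k|_p\le t_k\}$ with $\pD F_k|_p$ the functionals of ($S$1)--($S$3). That would glue the affine pieces along level sets of the $\alpha$'s, i.e.\ lines parallel to the edges $e_k$ of $Q$, whereas every region in the theorem is bounded by $\{\omega(\vv_i,v)=C\}$, a line parallel to $\vv_i$; these directions differ. What must be blown up, to feed Theorem~\ref{piecetogether}, are the dilation cones of the panels (the regions of smoothness of $d_e$), and the defining functions to which Propositions~\ref{prop5e8c49ee} and~\ref{prop5e8c4f94} are applied are those of the cones: near the poles and star seams the linear forms $\omega(\vv_i,\cdot)$, $\omega(\vv_{i+1},\cdot)$ cutting out $\pi^{-1}(C_i^-)$, and along wall seams $F_i(v,t)=|t|-\hat F_i(v)$; the extensions of $d_e$ enter only to supply the affine pieces $\pD f_Q|_p+c_Q$. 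Your closing sentence asserts the correct interfaces $\{\omega(\vv_i,\cdot)=C\}$, but the procedure you describe does not produce them. Your picture of the star-like seams is also off: there are finitely many panels, a neighborhood of a star-seam point is covered by exactly two dilation cones $U_{i-1}$ and $U_i$ (no argument about ``the remainder'' is needed), and it is at the poles, where all $2N$ cones meet, that the full family $\|w\|-\|w-v\|$ of Proposition~\ref{normHorofns} arises; the vertex case and the tips of the star seam (where $d_e$ turns out to be Pansu differentiable) are part of the enumeration you defer but are needed for the theorem as stated.
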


We remark that in each of the non-smooth cases, the constants $c_1$ and $c_2$ are uniquely determined by the value of $C$ since by definition $f(0,0) = 0$ and $f$ is continuous.
The proof of this theorem is the content of the rest of the section. 
Before diving into it, we present several consequences, in particular the description of the horoboundary of $(\HH,d)$.

Theorem~\ref{thm5f7476ff} has corollaries concerning the regularity of $d_e$ on the sphere.
Indeed, since the Pansu derivative of $d_e$ on the ceiling depends only on the endpoints of trace paths, 
it follows that $\pD d_e$ is continuous on the ceiling and the basement of $\de B$,
except to the star-like seams near the poles, in red in Figure~\ref{fig:samePD}.
We could draw a similar figure for the basement, where the families would spiral in anticlockwise, instead of clockwise.

\begin{corollary}\label{cor5f72adee}
Except for star-like sets near the north and south poles, 
the function $d_e$ has continuous Pansu derivative 

in the interior of the ceiling and the basement of $\de B$.

\end{corollary}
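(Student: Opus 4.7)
The plan is to read the statement off directly from Theorem~\ref{thm5f7476ff}. Three ingredients are needed: smoothness of the explicit formulas inside each ceiling or basement panel, continuous matching across the seams between adjacent panels that avoid the pole, and identification of the remaining seams with the star-like set.

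First, on the interior of a ceiling panel $\Panel^+_{ij}$, item $(S1)$ of Theorem~\ref{thm5f7476ff} gives $\pD d_e|_p(v,t) = ((1-s)\alpha_{j-1}+s\alpha_j)(v)$ where $p=(u_{ij}(r,s),\phi_{ij}(r,s))$. Since $(r,s)\mapsto(u_{ij}(r,s),\phi_{ij}(r,s))$ is a smooth parametrization and the formula depends smoothly on $s$, the Pansu derivative varies continuously on each open panel. The analogous fact for basement panels follows from $(S2)$, or from applying the involution $\flip$ of Remark~\ref{rem5f784653}.

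Next, I would verify continuity at each seam shared by two adjacent non-degenerate ceiling panels that does not touch the pole. By inspection of formula~(\ref{u-rs}), such adjacencies come in two kinds: between $\Panel^+_{ij}$ and $\Panel^+_{i+1,j}$ along $u_{ij}(0,s)=u_{i+1,j}(1,s)$, and between $\Panel^+_{ij}$ and $\Panel^+_{i,j+1}$ along $u_{ij}(r,1)=u_{i,j+1}(r,0)$. In the first case, the formula from $(S1)$ does not involve $r$ at all, so both one-sided limits equal $((1-s)\alpha_{j-1}+s\alpha_j)(v)$; in the second, evaluating at $s=1$ and at $s=0$ respectively both yield $\alpha_j(v)$. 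Hence $\pD d_e$ extends continuously across every such interior seam.

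The only remaining seams in the interior of the ceiling are those where more than two panels abut each other, namely the edges of the partition $\{Q_{ij}\}$ that issue from the origin of $\R^2$ (the projection of the pole). These lift to the star-like seams of case $(\not{}\!S3)$ of Theorem~\ref{thm5f7476ff}, where the blow-ups are piecewise non-linear and $d_e$ is not Pansu differentiable, so they must genuinely be excluded from the statement. The basement case is symmetric, or follows by the involution $\flip$. The only delicate point in the argument is the bookkeeping needed to verify that the two kinds of interior-seam identifications really preserve the parameter $s$ (which reduces to $\mu_{ij}(0,s)=\mu_{i+1,j}(1,s)$) and the parameter $r$; once that is checked, continuity of $\pD d_e$ off the star-like set is immediate.
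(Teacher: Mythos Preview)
Your approach is essentially the same as the paper's: the paper observes in one sentence that the ceiling Pansu derivative $((1-s)\alpha_{j-1}+s\alpha_j)$ depends only on the endpoint of the trace path (equivalently, on $(j,s)$ and not on $(i,r)$), and you spell out the same observation seam by seam. Your two kinds of interior seams and the matching computations are correct.

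One minor imprecision: you describe the star seams as the places ``where more than two panels abut each other.'' That is not quite right---away from the pole, a star seam $\Panel_{ii}^+$ lies between exactly two non-degenerate panels, namely $\Panel_{i,i-1}^+$ at $s=1$ and $\Panel_{i+1,i}^+$ at $r=1$. What distinguishes this seam is not the number of panels but the parameter mismatch: it is the $s=1$ edge of one panel glued to the $r=1$ edge of the other, so it falls into neither of your two matching cases. The one-sided limits are $\alpha_{i-1}$ and $(1-s)\alpha_{i-1}+s\alpha_i$ respectively, which disagree for $s\in(0,1]$, confirming the discontinuity you claim.
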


Corollary~\ref{cor5f72adee} with Proposition~\ref{prop5f72ae06} implies that $\de B$ is a $C^1_H$ submanifold of $\HH$ (in the sense of~\cite{2020arXiv200402520J}) at every interior point of the ceiling or basement, outside the star-like sets.

\begin{figure}[H]

\centering
\begin{tikzpicture}[>=latex,scale=.8]

\node at (0,0) {\includegraphics[width=2in]{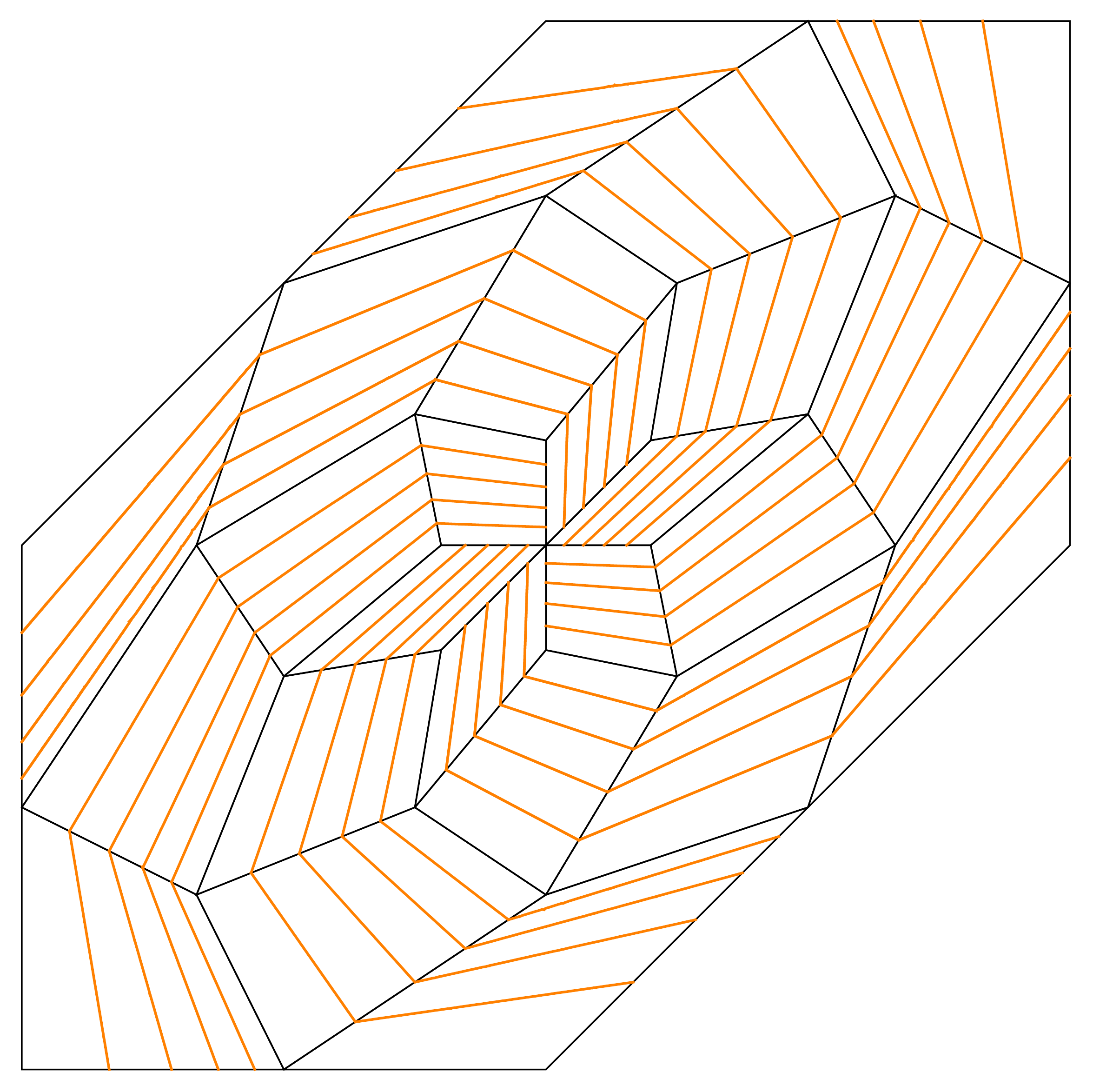}};
\draw[red, very thick] (-.61,0) -- (.61,0)
   (0,.61) -- (0, -.61)
   (.61, .61) -- (-.61,-.61); 

\node at (10,0) {\scalebox{1}[-1]{\includegraphics[width=2in,angle=90]{ceil-samePD.pdf}}};
\draw[red, very thick] (-.61+10,0) -- (.61+10,0)
   (0+10,.61) -- (0+10, -.61)
   (.61+10, .61) -- (-.61+10,-.61); 

\end{tikzpicture}

\caption{A top-down view of families of ceiling points (left) and a bottom-up view of families of basement points (right) with the same Pansu derivatives.}
\label{fig:samePD}
\end{figure}

\begin{theorem}[The horofunction boundary of $(\HH,d)$]\label{thm:main}
	The horofunction boundary of $(\HH,d)$ is the union of the image of the following embeddings in $C(\HH)$:
	First, a disk given by $K:\R^2\to C(\HH)$,
	$w\mapsto f_w$, where $f_w(v,s)=\|w\|-\|v-w\|$.
	The boundary of $K(\R^2)$ in $C(\HH)$ is $-\de_h(\R^2,\norm{\cdot})$.
	
	Second, for each $i$, we have the following maps	
	 $[0,1]\times[-\infty,\infty]\to C(\HH)$:
	\begin{align*}
	\psi_i^\vee(s,a)
		&= \left( \alpha_{i-1}-(a\vee0) \right) \vee \left( (1-s)\alpha_{i-1}+s\alpha_i+(a\wedge0) \right) \\
	\psi_i^\wedge(s,a)
		&= \left( \alpha_{i-1}-(a\wedge0) \right) \wedge \left( (1-s)\alpha_{i-1}+s\alpha_i+(a\vee0) \right) \\
	\xi_i^\vee(s,a) 
		&= \left( \alpha_{i}-(a\vee0) \right) \vee \left( (1-s)\alpha_{i-1}+s\alpha_i+(a\wedge0) \right) \\
	\xi_i^\wedge(s,a)
		&= \left( \alpha_{i}-(a\wedge0) \right) \wedge \left( (1-s)\alpha_{i-1}+s\alpha_i+(a\vee0) \right) .
	\end{align*}
	For each $i$, the image of these four maps is two spheres glued together along a meridian.
	The second meridian of each of the two spheres is 
	the segment between $\alpha_{i-1}$ and $\alpha_i$ in $\de_h(\R^2,\|\cdot\|)$ and $-\de_h(\R^2,\|\cdot\|)$, respectively.
\end{theorem}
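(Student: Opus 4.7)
The approach combines Lemma~\ref{lem:pansuderiv}, which identifies horofunctions with blow-ups of $d_e$, with the complete enumeration of these blow-ups in Theorem~\ref{thm5f7476ff}, and with Theorem~\ref{vertSeq} which treats vertically escaping sequences. Every horofunction $f \in \de_h(\HH, d)$ arises in one of two ways: as a blow-up of $d_e$ at some $p \in \de B$ (via $p_n \to p$, $\epsilon_n \to 0$) or as a limit along a vertical sequence $q_n = (w_n, s_n)$ with $w_n$ bounded and $|s_n| \to \infty$. After passing to a subsequence (Zarankiewicz's Theorem~\ref{thm5f74be88}), Theorem~\ref{thm5f7476ff} and Theorem~\ref{vertSeq} provide explicit expressions; the task is to match these against the parametrized images of $K$, $\psi_i^{\vee/\wedge}$, and $\xi_i^{\vee/\wedge}$.

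For the disk $K(\R^2)$: the formulas $f_w(v, t) = \|w\| - \|v - w\|$ come directly from Theorem~\ref{vertSeq} applied to $p_n = (w, n)$. Injectivity of $K$ follows since $w$ is recovered as the unique global maximum of $f_w$, and continuity is standard. To identify $\de K(\R^2)$ in $C(\HH)$, I would take $w_n \to \infty$ with $w_n / \|w_n\| \to \theta \in \de Q$; then $f_{w_n}$ converges to minus the supporting linear functional of $\|\cdot\|$ at $\theta$, tracing out all of $-\de_h(\R^2, \|\cdot\|)$ as $\theta$ ranges over $\de Q$, since the horoboundary of a polygonal normed plane consists precisely of such piecewise linear limits (cf.~\cite{karlsson-horoballs, walsh-norm, ji-schilling}).

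For the four families of maps: each non-smooth blow-up listed in Theorem~\ref{thm5f7476ff} depends only on the horizontal coordinate and consists of at most two linear pieces of the form $\alpha_{i-1}(v) + c_1$ and $((1-s)\alpha_{i-1} + s\alpha_i)(v) + c_2$, separated by the line $\{\omega(\vv_i, v) = C\}$ for some $C \in \R \cup \{\pm\infty\}$. The $\vee$ versus $\wedge$ distinction encodes whether the pieces join as a maximum or minimum; the $\psi$ versus $\xi$ distinction distinguishes ceiling-side blow-ups (cases near the north pole, ceiling star-seams, and ceiling-wall seams) from basement-side blow-ups, with the vertex case contributing to both. The parameter $s \in [0, 1]$ is the trace-path interpolation coefficient in the adjacent ceiling (or basement) panel, while $a \in [-\infty, +\infty]$ reparametrizes the cut-off $C$, with the $a \vee 0$ and $a \wedge 0$ decomposition producing exactly the constants $c_1, c_2$ needed to enforce continuity and $f(0, 0) = 0$. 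A case-by-case verification then shows each non-smooth blow-up equals $\psi_i^{\vee/\wedge}(s, a)$ or $\xi_i^{\vee/\wedge}(s, a)$ for unique parameters.

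For the topological statements: a direct computation at the boundary of $[0, 1] \times [-\infty, +\infty]$ shows each map collapses two adjacent edges of its domain to a single constant function (for example, $\psi_i^\vee(0, a) = \psi_i^\vee(s, -\infty) = \alpha_{i-1}$), so the image is a topological disk. The remaining boundary of $\psi_i^\vee$ is the loop formed by the curve $\psi_i^\vee(1, \cdot)$ (from $\alpha_{i-1}$ to $\alpha_i$ through $\max(\alpha_{i-1}, \alpha_i)$) together with the segment $\psi_i^\vee(\cdot, +\infty) = \{(1-s)\alpha_{i-1} + s\alpha_i : s \in [0,1]\}$ in $\de_h(\R^2, \|\cdot\|)$. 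The analogous boundary loop of $\xi_i^\vee$ coincides with this one (up to reparametrization of $a$), so gluing the two disks produces a topological sphere with these two curves as meridians. Replacing $\vee$ by $\wedge$ yields the second sphere, with one meridian now a segment in $-\de_h(\R^2, \|\cdot\|)$. The main obstacle will be the combinatorial bookkeeping required to match the exhaustive list of non-smooth blow-ups in Theorem~\ref{thm5f7476ff} with the parametrizations, especially in the vertex case which straddles both hemispheres and contributes to both $\vee$- and $\wedge$-spheres, to ensure no horofunction is missed or double-counted.
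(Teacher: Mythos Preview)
Your proposal is correct and follows essentially the same approach as the paper, whose entire proof is the single sentence ``Theorem~\ref{thm:main} is proven by inspection of the functions listed in Theorem~\ref{thm5f7476ff}.'' You are carrying out that inspection in detail, which the paper leaves to the reader. One small redundancy: your dichotomy ``blow-up at $p\in\partial B$ or limit along a vertical sequence'' is not a true dichotomy, since by Lemma~\ref{lem:pansuderiv} every horofunction is already a blow-up at some sphere point, and vertical sequences correspond precisely to blow-ups at the poles (case ($\not{\!}S$1) of Theorem~\ref{thm5f7476ff}); Theorem~\ref{vertSeq} is therefore subsumed rather than supplementary, though invoking it does no harm.
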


Theorem~\ref{thm:main} is proven by inspection of the functions listed in Theorem~\ref{thm5f7476ff}.

It turns out that all of the smooth points on the ceiling, basement, and vertical walls of $\de B$ contribute a circle's worth of functions to the horoboundary. Indeed, they all have Pansu derivatives which lie in $L^*$, the boundary of the dual ball $Q^*$. This is analogous to results in the sub-Riemannian case; Klein--Nicas in \cite{KN-cc} showed that the smooth points contribute a circle's worth of functions to the boundary, while the rest of the boundary comes from vertical sequences, analogous to our Theorem~\ref{vertSeq}.

See Figures~\ref{colors} and~\ref{fig5f766085}.
In Figure \ref{colors}, we introduce a sense of directionality to the horofunction boundary. Recall that to any sequence $\{q_n\} \subset \HH$ converging to a horofunction, we can associate sequences $\{p_n\}_n \subset \de B$ and $\{\epsilon_n\}_n \subset \R$, where $\delta_{\epsilon_n} q_n = p_n$. For each horofunction $f \in \de_h\HH$, there exist sequences $\{q_n\}_n \leftrightarrow (\{p_n\}_n,\{\epsilon_n\}_n)$ such that $q_n \to f$ and $p_n \to p \in \de B$. This assigns directions to horofunctions in the boundary. This correspondence between the boundary and the unit sphere is far from bijective. There exist families of directions, such as each blue vertical wall panel, which collapse to single points in the boundary. On the other hand, there are directions, such as the purple north and south poles, which blow-up to 1- or 2- dimensional families in the boundary. In these cases, which boundary point you converge to will depend on how exactly $q_n$ goes off to infinity. The colors in the figures allow us to see which directions on the sphere converge to which families horofunctions.

\begin{corollary}\label{busemann}
Let $d$ be a polygonal sub-Finsler metric on $\HH$. Then the set of Busemann functions is homeomorphic to a circle.
\end{corollary}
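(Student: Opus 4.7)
The plan is, first, to identify every infinite geodesic ray in $(\HH,d)$ as a beeline; second, to compute its Busemann function using Lemma~\ref{lem:pansuderiv} and Theorem~\ref{thm5f7476ff}; and third, to observe that the resulting collection of horofunctions forms a $1$-complex combinatorially isomorphic to the boundary $\de Q^*$ of the dual polygon, hence a topological circle.

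For the first step, the Duchin--Mooney classification recalled in Section~\ref{coordinates} implies that every geodesic in $(\HH,d)$ is a concatenation of beelines and trace paths; a trace path ceases to be length-minimizing once its projection completes a full period of the isoperimetrix, so no infinite geodesic ray can contain a trace-path portion, and every infinite geodesic ray is a beeline $\gamma(t)=p\cdot(tv,0)$ with $p\in\HH$ and $v\in\de Q$ of norm $1$. Applying Lemma~\ref{lem:pansuderiv} to $q_n=\gamma(n)$ with $\epsilon_n=1/d(e,q_n)$ yields $p_n:=\delta_{\epsilon_n}q_n^{-1}\to(-v,0)\in\de B$. If $v$ lies in the relative interior of an edge $e_j$ of $Q$, then $(-v,0)$ sits in the interior of the wall panel $\Panel_{j+N,j+N+1}$, where $d_e$ is strictly Pansu differentiable by case $(S3)$ of Theorem~\ref{thm5f7476ff}; hence the Busemann function is the horizontal linear functional $\alpha_{j+N}\circ\pi$, independent of the base point $p$. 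If $v=\vv_j$ is a vertex, then $(-\vv_j,0)$ lies on the seam between two adjacent wall panels and $d_e$ is not Pansu differentiable there; the blow-up machinery of Section~\ref{sec:kuratowski}, specifically Theorem~\ref{piecetogether}, identifies the Busemann function as a piecewise-linear vertex horofunction from Theorem~\ref{thm5f7476ff}, parametrized by a value $C=C(\gamma)\in\R\cup\{\pm\infty\}$.

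The main obstacle, and the last step of the plan, is to verify that as $p$ varies over $\HH$ with $v=\vv_j$ fixed, the parameter $C(\gamma)$ attains every value in $\R\cup\{\pm\infty\}$, with the limits $C=\pm\infty$ recovering the linear Busemann functions $\alpha_{j+N-1}$ and $\alpha_{j+N}$ of the two edges $e_{j-1}$ and $e_j$ adjacent to $\vv_j$. Here $C$ is controlled by the asymptotic direction of approach of $p_n$ toward $(-\vv_j,0)$, and a short calculation shows that this direction depends monotonically on the vertical component of $p$ and sweeps across all of $\R\cup\{\pm\infty\}$. Granting this, each vertex $\vv_j$ contributes an arc in $\Co(\HH)/\R$ joining the linear Busemann functions of its two incident edges, and gluing the $2N$ arcs cyclically along the $2N$ shared endpoints $\{\alpha_k\}_{k=1}^{2N}$ produces a graph combinatorially isomorphic to $\de Q^*$. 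Continuity of the parametrization $(\vv_j,C)\mapsto b_{\vv_j,C}$ follows from Lemma~\ref{lem5e8c350a} and injectivity is immediate from the explicit formulas, so the set of Busemann functions is homeomorphic to the circle $\de Q^*$.
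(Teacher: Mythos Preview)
Your approach is essentially the paper's---identify infinite rays as beelines and read off their Busemann functions from the wall/vertex blow-ups---but you carry out explicitly what the paper dispatches in one line by invoking $\de_h(\R^2,\|\cdot\|_Q)\cong S^1$. Two points need correction.

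First, the parametrisation $\gamma(t)=p\cdot(tv,0)$ does not describe every infinite geodesic ray. In the paper's terminology (Section~\ref{coordinates}) a \emph{beeline} is the lift of an arbitrary $(\R^2,\|\cdot\|)$-geodesic, and for polygonal norms such planar geodesics need not be straight segments (think of staircase paths in $\ell^1$). So the Duchin--Mooney dichotomy gives you ``infinite rays are beelines,'' but not that they are one-parameter left cosets. This gap is harmless for the conclusion, since any infinite beeline still satisfies $\delta_{1/d_e(\gamma(t))}\gamma(t)^{-1}\to(-v,0)$ for some $v\in\de Q$, and hence its Busemann function is a blow-up at a wall or vertex point---but you should say this rather than overclaim the shape of~$\gamma$.

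Second, the parameter $C$ in the vertex blow-up does \emph{not} depend on the vertical component of $p$. A direct computation with the group action (as in Section~\ref{sec:dynamics}) shows that for $\gamma(t)=p\cdot(t\vv_j,0)$ with $p=(w,s)$ one has $b_\gamma=p.\,b_{\gamma_0}$, and the seam of $b_{\gamma_0}$ at $\{\omega(\vv_i,v)=0\}$ gets shifted to $\{\omega(\vv_i,v)=\omega(\vv_i,w)\}$; thus $C=\omega(\vv_i,w)=-\omega(\vv_j,w)$ is purely horizontal. Your conclusion that $C$ sweeps $\R$ as $p$ varies is still correct, so the gluing to a circle goes through.
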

\begin{proof}
	The only infinite geodesic rays based at the origin are the beeline geodesics, the lifts of $L$-norm geodesics in the plane to admissible paths in $\HH$, as described in Section \ref{coordinates}. Thus, the set of Busemann functions comes from blow-ups of points in the vertical walls and vertices of the unit sphere and is isomorphic to $\de_h(\R^2, \|\cdot\|_Q)~\cong~S^1$.
\end{proof}

%
%

\newcommand{\ml}{3}
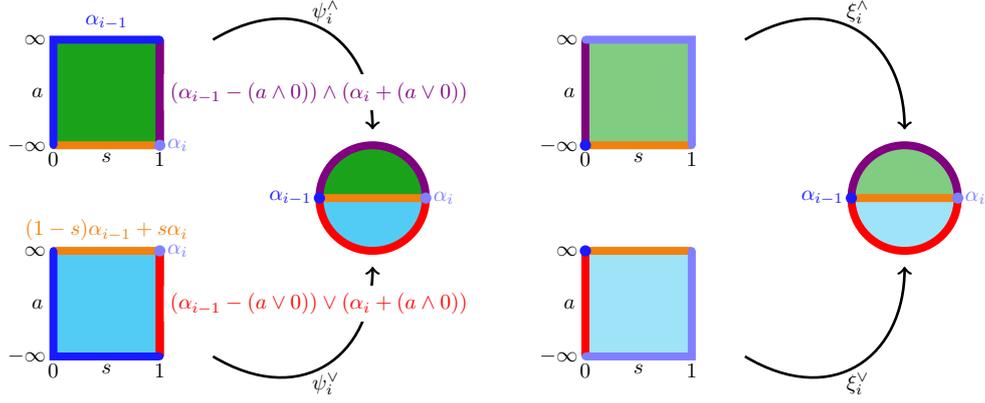
\begin{figure}
\begin{tikzpicture}[scale=0.7, every node/.style={scale=0.8}]

\colorlet{giallo}{blue!90!white}
\colorlet{blu}{cyan}
\colorlet{verde}{orange!90!gray}
\colorlet{rosso}{blue!50!white}
\colorlet{viola}{violet}

\draw[->,line width=1pt] (5-\ml,1) .. controls (6.66-\ml,2) and (8-\ml,1) ..node[above]{$\psi^\wedge_i$} (8-\ml,-0.7);

\draw[->,line width=1pt] (5-\ml,-5) .. controls (6.66-\ml,-6) and (8-\ml,-5) ..node[below]{$\psi^\vee_i$} (8-\ml,-3.3);

\begin{scope}
\coordinate (A) at (-1,-1);
\coordinate (B) at (1,-1);
\coordinate (C) at (1,1);
\coordinate (D) at (-1,1);

\fill[black!40!green, opacity=.9] (A) -- (B) -- (C) -- (D) -- cycle;
\draw 
	(A) node[below]{$0$} node[left]{$-\infty$} --node[below]{$s$} 
	(B) node[below]{$1$} -- 
	(C) -- 
	(D) node[left]{$\infty$} --
	cycle;
\draw[line width=3pt, verde, opacity=1] (A) -- (B);
\draw[line width=3pt, viola, opacity=1] (B) --node[right,fill=white]{$(\alpha_{i-1}-(a\wedge0))\wedge(\alpha_i+(a\vee0))$} (C);
\draw[line width=3pt, viola, opacity=1] (B) -- (C);
\draw[line width=3pt, giallo, opacity=1] (C) --node[above]{$\alpha_{i-1}$} (D) -- node[black,left]{$a$}  (A);
\fill[rosso] (B)  circle (3 pt) node[above,right]{$\alpha_i$};
\fill[giallo] (A)  circle (2 pt);
\fill[giallo] (C)  circle (2 pt);
\end{scope}

\begin{scope}[yshift=-4cm]
\coordinate (A) at (-1,-1);
\coordinate (B) at (1,-1);
\coordinate (C) at (1,1);
\coordinate (D) at (-1,1);

\fill[white!40!cyan, opacity=.9] (A) -- (B) -- (C) -- (D) -- cycle;
\draw 
	(A) node[below]{$0$} node[left]{$-\infty$} --node[below]{$s$} 
	(B) node[below]{$1$} -- 
	(C) -- 
	(D) node[left]{$\infty$} --
	cycle;
\draw[line width=3pt, red, opacity=1] (B) --node[right,fill=white]{$(\alpha_{i-1}-(a\vee0))\vee(\alpha_i+(a\wedge0))$} (C);
\draw[line width=3pt, red, opacity=1] (B) -- (C);
\draw[line width=3pt, verde, opacity=1] (C) --node[above]{$(1-s)\alpha_{i-1}+s\alpha_i$} (D);
\draw[line width=3pt, giallo, opacity=1] (D) --node[black,left]{$a$} (A) -- (B);
\fill[rosso] (C)  circle (3 pt) node[above,right]{$\alpha_i$};
\fill[giallo] (B)  circle (2 pt);
\fill[giallo] (D)  circle (2 pt);
\end{scope}

\begin{scope}[xshift=5cm,yshift=-2cm]
\coordinate (O) at (0,0) ;
\coordinate (A) at (-1,0) ;
\coordinate (C) at (1,0) ;

\fill[line width=3pt, black!40!green, opacity=.9] (C) arc[start angle=0, end angle=180, radius =1] -- cycle;
\draw[line width=3pt, viola, opacity=1] (C) arc[start angle=0, end angle=180, radius =1];
\fill[line width=3pt, white!40!cyan, opacity=.9] (C) arc[start angle=0, end angle=-180, radius =1] -- cycle;
\draw[line width=3pt, red, opacity=1] (C) arc[start angle=0, end angle=-180, radius =1];
\draw[line width=3pt, verde, opacity=1] (A) -- (C) ;

\fill[rosso] (C)  circle (3 pt) node[right]{$\alpha_i$};
\fill[giallo] (A)  circle (3 pt) node[left]{$\alpha_{i-1}$};
\end{scope}


\begin{scope}[xshift = 10cm]

\draw[->,line width=1pt] (5-\ml,1) .. controls (6.66-\ml,2) and (8-\ml,1) ..node[above]{$\xi^\wedge_i$} (8-\ml,-0.7);

\draw[->,line width=1pt] (5-\ml,-5) .. controls (6.66-\ml,-6) and (8-\ml,-5) ..node[below]{$\xi^\vee_i$} (8-\ml,-3.3);

\begin{scope}
\coordinate (A) at (-1,-1);
\coordinate (B) at (1,-1);
\coordinate (C) at (1,1);
\coordinate (D) at (-1,1);

\fill[black!40!green, opacity=.5] (A) -- (B) -- (C) -- (D) -- cycle;
\draw 
	(A) node[below]{$0$} node[left]{$-\infty$} --node[below]{$s$} 
	(B) node[below]{$1$} -- 
	(C) -- 
	(D) node[left]{$\infty$} --
	cycle;
\draw[line width=3pt, verde, opacity=1] (A) -- (B);
\draw[line width=3pt, viola, opacity=1] (D) --node[black,left]{$a$} (A);
\draw[line width=3pt, rosso, opacity=1] (B) -- (C) -- (D);
\fill[giallo] (A)  circle (3 pt) ;
\fill[rosso] (B)  circle (2 pt);
\fill[rosso] (D)  circle (2 pt);
\end{scope}

\begin{scope}[yshift=-4cm]
\coordinate (A) at (-1,-1);
\coordinate (B) at (1,-1);
\coordinate (C) at (1,1);
\coordinate (D) at (-1,1);

\fill[white!40!cyan, opacity=.5] (A) -- (B) -- (C) -- (D) -- cycle;
\draw 
	(A) node[below]{$0$} node[left]{$-\infty$} --node[below]{$s$} 
	(B) node[below]{$1$} -- 
	(C) -- 
	(D) node[left]{$\infty$} --
	cycle;

\draw[line width=3pt, verde, opacity=1] (C) -- (D);
\draw[line width=3pt, red, opacity=1] (D) --node[black,left]{$a$} (A);
\draw[line width=3pt, rosso, opacity=1] (A) -- (B) -- (C) ;
\fill[giallo] (D)  circle (3 pt);
\fill[rosso] (A)  circle (2 pt);
\fill[rosso] (C)  circle (2 pt);
\end{scope}

\begin{scope}[xshift=5cm,yshift=-2cm]
\coordinate (O) at (0,0) ;
\coordinate (A) at (-1,0) ;
\coordinate (C) at (1,0) ;

\fill[line width=3pt, black!40!green, opacity=.5] (C) arc[start angle=0, end angle=180, radius =1] -- cycle;
\draw[line width=3pt, viola, opacity=1] (C) arc[start angle=0, end angle=180, radius =1];
\fill[line width=3pt, white!40!cyan, opacity=.5] (C) arc[start angle=0, end angle=-180, radius =1] -- cycle;
\draw[line width=3pt, red, opacity=1] (C) arc[start angle=0, end angle=-180, radius =1];
\draw[line width=3pt, verde, opacity=1] (A) -- (C) ;

\fill[rosso] (C)  circle (3 pt) node[right]{$\alpha_i$};
\fill[giallo] (A)  circle (3 pt) node[left]{$\alpha_{i-1}$};
\end{scope}
\end{scope}

\end{tikzpicture}

\caption{A schematic description of the maps $\psi_i^\vee$, $\psi_i^\wedge$, $\xi_i^\vee$ and $\xi_i^\wedge$.}
\label{fig5f766085}
\end{figure}

\begin{figure}[H]
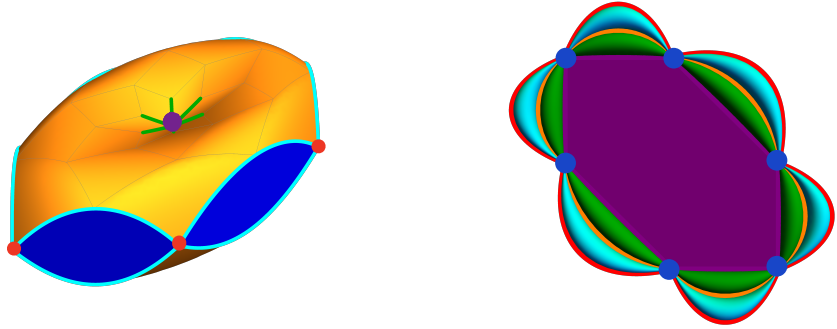

\begin{minipage}{.4\textwidth}
\centering
\includegraphics[width=.75\textwidth]{hex-sphere}
\end{minipage}
\begin{minipage}{.4\textwidth}
\centering
\includegraphics[width=.75\textwidth]{hex-boundary}
\end{minipage}
\caption{The unit sphere coming from the hexagonal norm on the left, and the corresponding sub-Finsler boundary on the right.}
\label{colors}
\end{figure}

\subsection{Blow-ups of $d_e$ at smooth points}
First we consider blow-ups of $d_e$ at smooth points on $\de B$, such as in the interior of each ceiling, basement, or wall panel making up the unit sphere. Since $d_e$ is smooth in the interior of each of these panels, it is strictly Pansu differentiable.

We know from above that ceiling and basement points are reached by geodesics which are lifts of trace paths. It turns out that the Pansu derivative of $d_e$ on the ceiling or basement depends only on where in the isoperimetrix $I$ the trace path ends and is independent of the rest of the shape of the trace path.

\begin{proposition}[Ceiling and basement Pansu derivatives]\label{ceilingPD}
If $p\in\de B$ is a ceiling point with $\pi(p)=u_{ij}(r,s)$, $j\notin\{i,i+1\}$,

then the Pansu derivative of $d_e$ exists at $p$, and
\[
\pD d_e\vert_p(v,t) = ((1-s)\alpha_{j-1}+ s\alpha_j)(v).
\]
Similarly, if $p\in\de B$ is a basement point with $\pi(p)=-u_{ij}(r,s)$, $i< j$,
then the Pansu derivative of $d_e$ exists at $p$, and
\[
\pD d_e\vert_p(v,t) =  ((1-r)\alpha_{i}+ r\alpha_{i-1})(v).
\]
\end{proposition}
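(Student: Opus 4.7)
The plan is to establish $C^1_H$-regularity of $d_e$ at smooth interior ceiling points via a local smooth parametrization, compute $\pD d_e|_p$ using geodesic extension and implicit differentiation, and transfer to the basement case via the involution $\flip$ of Remark~\ref{rem5f784653}.

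\textbf{Step 1 (Smoothness via a local parametrization).} Near $p$, I will consider the map
$F:(r,s,\lambda)\mapsto \delta_\lambda(u_{ij}(r,s),\phi_{ij}(r,s))$. By the Duchin--Mooney description of geodesics as lifts of trace paths, $d_e\circ F=\lambda$. A Jacobian computation at $(r_0,s_0,1)$, in the generic case where $\vv_i$ and $\vv_j$ are linearly independent, shows $F$ is a $C^\infty$-diffeomorphism between open sets of $\R^3$: the horizontal components of $\partial_r F, \partial_s F, \partial_\lambda F$ span a 2D subspace of $\R^2$, and the vertical component of the kernel direction is proportional to $c_ic_j\omega(\vv_i,\vv_j)-\Phi$, which is nonzero for generic configurations (here $\sigma_{ij}=c_i\vv_i+c_j\vv_j$ and $\Phi=\sum_{i<k_1<k_2<j}\omega(\sigma_{k_1},\sigma_{k_2})$). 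Thus $d_e$ is classically smooth near $p$ and hence strictly Pansu differentiable; by Proposition~\ref{prop5f749836}, $\ell_p:=\pD d_e|_p$ factors through $\HH/[\HH,\HH]$, so I treat it henceforth as an element of $(\R^2)^*$.

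\textbf{Step 2 (Localization on $\partial Q^*$).} The Eikonal equation (Proposition~\ref{prop5f72ae06}) gives $\|\ell_p\|_{Q^*}=1$, so $\ell_p\in\partial Q^*$. The trace-path geodesic from $e$ to $p$ terminates in horizontal direction $\vv_j$, and a direct computation (adding the segment $u\to u+\epsilon\vv_j$, which contributes balayage area $\tfrac{\epsilon}{2}\omega(u,\vv_j)$) shows that extending the trace by length $\epsilon$ along $\vv_j$ produces exactly the point $p\cdot(\epsilon\vv_j,0)$ as a geodesic endpoint at distance $1+\epsilon$. Hence $\ell_p(\vv_j)=1$, so $\ell_p$ lies on the edge of $\partial Q^*$ dual to $\vv_j$, namely $[\alpha_{j-1},\alpha_j]$. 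Write $\ell_p=(1-t)\alpha_{j-1}+t\alpha_j$ for some $t\in[0,1]$.

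\textbf{Step 3 (Identifying $t=s$).} To pin down $t$, I apply the chain rule to the curve $\epsilon\mapsto p\cdot(\epsilon\vv_i,0)=(u_0+\epsilon\vv_i,\phi_0+\tfrac{\epsilon}{2}\omega(u_0,\vv_i))$. Writing this as $F(r(\epsilon),s(\epsilon),\lambda(\epsilon))$ and differentiating the horizontal and vertical components at $\epsilon=0$ gives a $3\times 3$ linear system in $(\dot r,\dot s,\dot\lambda)$. The elementary identities $\partial_r\tilde\phi=\tfrac12\omega(\sigma_i,\tilde u)$ and $\partial_s\tilde\phi=\tfrac12\omega(\tilde u,\sigma_j)$, derivable from~\eqref{phi-rs}, allow one to solve the system and read off
$\ell_p(\vv_i)=\dot\lambda\mu_0+\mu_0^{-1}(\|\sigma_i\|\dot r+\|\sigma_j\|\dot s)$.
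After simplification the expression is affine in $s$ and independent of $r$; specialized at $s=0$ and $s=1$ it yields the values $\alpha_{j-1}(\vv_i)$ and $\alpha_j(\vv_i)$ respectively, hence equals $((1-s)\alpha_{j-1}+s\alpha_j)(\vv_i)$. Combined with $\ell_p(\vv_j)=1=((1-s)\alpha_{j-1}+s\alpha_j)(\vv_j)$ and linearity, this gives $\ell_p=(1-s)\alpha_{j-1}+s\alpha_j$. The main obstacle is the algebraic identification of the boundary values with $\alpha_{j-1}(\vv_i)$ and $\alpha_j(\vv_i)$; this can be verified directly from $\alpha_k=\omega(e_k,\cdot)/\omega(e_k,\vv_k)$, or alternatively by invoking continuity of $\ell_p$ across the shared boundaries of $Q_{ij}$ with the adjacent panels $Q_{i,j-1}$ and $Q_{i,j+1}$ combined with an induction on $j-i$.

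\textbf{Step 4 (Basement case).} If $p$ is a basement point with $\pi(p)=-u_{ij}(r,s)$, Remark~\ref{rem5f784653} shows that $\flip(p)$ is a ceiling point of $(\HH,d^\flip)$ whose horizontal projection is $u^\flip_{-j+N,-i+N}(s,r)$, with the roles of $(r,s)$ and $(i,j)$ swapped. Applying the ceiling formula to $\flip(p)$ and transporting via $\pD d^\flip_e|_{\flip(p)}\circ\flip=\pD d_e|_p$, together with the identities $\alpha^\flip_k\circ\flip=-\alpha_{-k-1}$ and the periodicity $\alpha_{k+N}=-\alpha_k$, yields the stated basement formula $\pD d_e|_p(v,t)=((1-r)\alpha_i+r\alpha_{i-1})(v)$.
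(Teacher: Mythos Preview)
Your overall architecture matches the paper's: both establish smoothness of $d_e$ on the interior of each ceiling panel, use geodesic extension in the $\vv_j$ direction to obtain $\ell_p(\vv_j)=1$, and transfer to the basement via the involution $\flip$. The divergence is in how the second constraint on $\ell_p$ is obtained. The paper computes the horizontal tangent $\hat w$ to the sphere at $p$ (the intersection $T_p\partial B\cap\Delta_p$, written out explicitly) and then verifies the polynomial identity $A[\hat w]=0$ for $A=(1-s)\alpha_{j-1}+s\alpha_j$ by a lengthy symplectic calculation. You instead propose to evaluate $\ell_p$ on $\vv_i$ via implicit differentiation and then determine $\ell_p$ from its values on the pair $\{\vv_i,\vv_j\}$.

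There is a genuine structural gap in Step~3: your determination of $\ell_p$ from $\ell_p(\vv_i)$ and $\ell_p(\vv_j)$ requires $\{\vv_i,\vv_j\}$ to span $\R^2$, and this fails exactly when $j=i+N$, i.e., $\vv_j=-\vv_i$. The panel $Q_{i,i+N}$ is a genuine non-degenerate quadrilateral for every $N\ge 2$ (for $N=2$ it is in fact the very first non-degenerate panel), so this case cannot be excluded. In that situation $\ell_p(\vv_i)=-1$ carries no information beyond what Step~2 already gives. The paper's choice of $\hat w$ sidesteps this, since $\hat w$ is generically transverse to $\vv_j$ regardless of the relation between $\vv_i$ and $\vv_j$. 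Your proposed fallback---continuity across panel boundaries together with induction on $j-i$---could in principle close the gap, but it is not spelled out, and for $N=2$ the base case itself is already the problematic one. The same issue infects Step~1, where you establish that $F$ is a local diffeomorphism only ``in the generic case where $\vv_i$ and $\vv_j$ are linearly independent''; smoothness of $d_e$ on the interior of $\Panel^+_{i,i+N}$ therefore remains unjustified in your argument (the paper simply cites the Duchin--Mooney description for this).

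A minor additional point: if $d_e\circ F=\lambda$ as you set up in Step~1, then the chain rule gives directly $\ell_p(\vv_i)=\dot\lambda(0)$, not the expression $\dot\lambda\mu_0+\mu_0^{-1}(\|\sigma_i\|\dot r+\|\sigma_j\|\dot s)$ you write in Step~3; this suggests a bookkeeping slip in how you are tracking the normalization $\mu$.
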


\begin{proof}
Given that $p$ is in the interior of $\Panel^+_{ij}$, $d_e$ is smooth at $p$, and hence the Pansu derivative exists. Pansu derivatives are linear and invariant on vertical fibers, so we are looking for a linear functional $A\in(\R^2)^*$ such that $\pD d_e(p)(v,t)=A[v]$.

Let $\gamma_1:[0,1+\epsilon]\to \HH$ be the unit-speed trace path geodesic from the origin to $\gamma_1(1)=p$. 
Since $p$ is in the interior of $\Panel_{ij}^+$, 

for sufficiently small $h$ we have $\gamma_1(1 + h) = p\delta_h\vv_j$, and so
\[
1 = \lim_{h\to 0} \frac{d_e(\gamma_1(1+h)) - d_e (\gamma_1(1))}{h} = \lim_{h\to0}\frac{d_e(p\delta_h\vv_j) - d_e (p)}{h} = \pD d_e\vert_p(\vv_j).
\]

Next, let $\gamma_2:(-\epsilon, \epsilon) \to \HH$ be a $C^2$ path along the unit sphere $\de B$ which is horizontal at $p$, with $\gamma_2(0) = p$ and $\gamma_2'(0) = w \in \Delta_p$. 
Since $\gamma_2$ is on the unit sphere, $d_e(\gamma_2) \equiv 1$.  A consequence of the horizontality of $\gamma_2$ at $p$ is the limit $\lim_{h\to 0} \delta_{1/h}(p\inv\gamma_2(h)) = \gamma_2'(0) = w$, and so by the Pansu differentiability of $d_e$ at $p$,
\[
0 = \lim_{h\to 0} \frac{d_e(\gamma_2(h)) - d_e (\gamma_2(0))}{h}  = \pD d_e\vert_p(w).
\]
Now, we seek an expression for $w$. 

Since $\Panel_{ij}^+ = \{(u(r,s), \phi(r,s)) : r, s \in [0,1]\}$,
where $u$ and $\phi$ are defined in~\eqref{u-rs} and in~\eqref{phi-rs},
the tangent bundle to the unit sphere has a frame given by 
$\left(\begin{smallmatrix}
\de_r u\\
\de_r \phi
\end{smallmatrix}\right)$ and  $\left(\begin{smallmatrix}
\de_s u\\
\de_s \phi
\end{smallmatrix}\right)$.
We take the partial derivatives of $u$ and $\phi$ 
\begin{equation}\label{rs_deriv}
\begin{aligned}
\de_ru(r,s) &= \frac{\norm{\sigma_i}}\mu(\vv_i - u(r,s)), 
&& \de_r\phi(r,s) = \frac{\norm{\sigma_i}}{\mu}\left(\frac12 \omega(\vv_i, u(r,s) -r\frac{\sigma_i}{\mu}) - 2\phi(r,s)\right),\\
\de_su(r,s) & =\frac{\norm{\sigma_j}}\mu (\vv_j - u(r,s)), 
&& \de_s\phi(r,s) = \frac{\norm{\sigma_j}}{\mu}\left(\frac12 \omega(u(r,s) - s\frac{\sigma_j}{\mu}, \vv_j) - 2\phi(r,s)\right),
\end{aligned}
\end{equation}
where again $\mu = r\norm{\sigma_i} + \norm{\sigma_{i+1}} +\ldots + \norm{\sigma_{j-1}} + s\norm{\sigma_j}$.
If $p = (u,\phi)$ has trace coordinates $(r,s)$ in $\Panel_{ij}^+$, then after rescaling and simplifying, we have
\[
\
T_p\de B = \text{ Span} \left \{ \begin{pmatrix}
\vv_i - u(r,s)\\
\frac1{2}\omega(\vv_i, u(r,s)) - 2\phi(r,s)
\end{pmatrix}, \begin{pmatrix}
\vv_j - u(r,s)\\
\frac1{2}\omega(u(r,s), \vv_j) - 2\phi(r,s)
\end{pmatrix}\right\}.
\]
Meanwhile, the horizontal subspace at $p$ is spanned 
by the left translations from the origin to $p$ of $(\vv_i - u(r,s),0)$ and $(\vv_j - u(r,s),0)$.

This gives
\[
\Delta_p= \text{ Span}\left \{\begin{pmatrix}
\vv_i - u(r,s)\\
\frac12\omega(u(r,s),\vv_i)
\end{pmatrix}, \begin{pmatrix}
\vv_j - u(r,s)\\
\frac12\omega(u(r,s),\vv_j)
\end{pmatrix}\right\}.
\]
These two bases for $T_p\de B$ and $\Delta_p$ allow us to find $w$ in the intersection as 
\begin{equation}\label{simplify}
w = \begin{pmatrix}
2\phi(r,s)(\vv_j - \vv_i) + \omega(u,\vv_i)(\vv_j- u)\\
\phi(r,s)\omega(u, \vv_j-\vv_i) + \frac12\omega(u,\vv_i)\omega(u,\vv_j)
\end{pmatrix}.
\end{equation}
The vector $w$ is the left-translation from the origin to $p$ of the horizontal vector 
$(\hat w,0)$, where
$\hat w:=2\phi(r,s)(\vv_j - \vv_i) + \omega(u,\vv_i)(\vv_j- u)$.
Notice that, if we set $A=(1-s)\alpha_{j-1}+s\alpha_j$, then in order to prove $\pD d_e|_p=A$ we only need to show that 
\begin{equation}\label{eq5f776f11}
A[\hat w]=0 ,
\end{equation}
because $Av_j=1$.
The proof of~\eqref{eq5f776f11} is a long computation, of which we describe the main steps.
The strategy is to write $A[\hat w]$ in terms of the symplectic duals $\alpha^\omega_i$ of the covectors $\alpha_i$.
One can easily show that
\[
\|\sigma_j\| = \omega(\alpha^\alpha_{j-1},\alpha^\omega_j) ,
\quad\text{ and }\quad
\vv_j = \frac{ \alpha_j-\alpha_{j-1} }{ \omega(\alpha_{j-1},\alpha_j) } . 
\]

First of all, we have
\begin{multline}\label{eq5f777be7}
A[\hat w]
= \frac1{\mu_{ij}^2} \Bigg( 
	\Bigg( \omega(r\sigma_i,\sigma_{ij}) + \omega(\sigma_{ij,s\sigma_j}) + \omega(r\sigma_i,s\sigma_j) + \sum_{i<a<b<j}\omega(\sigma_a,\sigma_b) \Bigg)
		A[\vv_j-\vv_i] 
	 \\ +
	\omega( r\sigma_i + \sigma_{ij} + s\sigma_j ,\vv_i)
	A[\mu \vv_j - r\sigma_i - \sigma_{ij} - s\sigma_j]
\Bigg) . 
\end{multline}
Secondly, one can check the following equalities: 
\begin{align*}
A[\vv_j-\vv_i] 
	&= 1-\frac{A[\alpha^\omega_i-\alpha^\omega_{i-1}]}{\omega(\alpha_{i-1},\alpha_i)} ,\\
A[\mu \vv_j - r\sigma_i - \sigma_{ij} - s\sigma_j]
	&= A[ r(\alpha^\omega_{i-1} - \alpha^\omega_i) + \alpha^\omega_i ]+ \mu ,\\
\omega( r\sigma_i + \sigma_{ij} + s\sigma_j ,\vv_i	)
	&= \frac{ A[ \alpha^\omega_i-\alpha^\omega_{i-1} ] }{ \omega(\alpha^\omega_{i-1},\alpha^\omega_i) } - 1 ,\\
\mu_{ij}
	&= r \omega(\alpha^\omega_{i-1},\alpha^\omega_i) + \sum_{i<k<j} \omega(\alpha^\omega_{k-1},\alpha^\omega_k) + s\omega(\alpha^\omega_{j-1},\alpha^\omega_j) ,\\
\sum_{i<a<b<j} \omega(\sigma_a,\sigma_b) 
	&= \sum_{i<k<j-1} \omega(\alpha^\omega_{k-1},\alpha^\omega_k) + \omega(\alpha^\omega_{j-2},\alpha^\omega_{j-1} ) - \omega(\alpha^\omega_i,\alpha^\omega_{j-1}) ,
\end{align*}
\begin{multline*}
\omega(r\sigma_i,\sigma_{ij}) + \omega(\sigma_{ij},s\sigma_j) + \omega(r\sigma_i,s\sigma_j) 
	= r\omega(\alpha^\omega_i-\alpha^\omega_{i-1},\alpha^\omega_{j-1}-\alpha^\omega_i) 
		+ s(\omega(\alpha^\omega_{j-1},\alpha^\omega_j) \\
		- \omega(\alpha^\omega_i,\alpha^\omega_j) 
		+ \omega(\alpha^\omega_i,\alpha^\omega_j) ) 
		+ rs (\omega(\alpha^\omega_i,\alpha^\omega_j)
			- \omega(\alpha^\omega_i,\alpha^\omega_{j-1})
			- \omega(\alpha^\omega_{i-1},\alpha^\omega_j) 
			+ \omega(\alpha^\omega_{i-1},\alpha^\omega_{j-1}) ) .
\end{multline*}
Finally, using these formulas to rewrite~\eqref{eq5f777be7}, one easily finds that 
$A[\hat w]$ factorizes into 
$\frac1{\mu_{ij}^2}\left(1-\frac{A[\alpha^\omega_i-\alpha^\omega_{i-1}]}{\omega(\alpha_{i-1},\alpha_i)}\right)$ 
and a polynomial of order two in $r$ and $s$.
Each coefficient of this polynomial is easily shown to be zero, completing the proof of~\ref{eq5f776f11}.

To show the result for basement points, we will use Remark~\ref{rem5f784653} and the result just proved for ceiling points.
Let $p\in\de B$ be in the basement
with $\pi(p)=u_{ij}(r,s)$, $i< j$, so that $\flip(p)$ lies in the ceiling of $\de B^\flip$ and $\pi(\flip(p))=-u_{-j,-i}^\flip(s,r) = u_{-j+N,-i+N}^\flip(s,r) $.
Then, for all $v\in\HH$,
\begin{align*}
	\pD d_e|_p[v]
	&= \pD d^\flip_e|_{\flip(p)}[\flip(v)] \\
	&= ((1-r) \alpha^\flip_{-i-1+N} + r \alpha^\flip_{-i+N})[\flip(v)] \\
	&= -((1-r) \alpha^\flip_{-i-1} + r \alpha^\flip_{-i})[\flip(v)] \\
	&= ( (1-r) \alpha_{i} + r \alpha_{i-1}) (v) .
\end{align*}
This completes the proof.

\end{proof}

\begin{proposition}[Wall Pansu derivatives]\label{wallPD}
If $p$ is in the interior of the wall panel $\Panel_{i,i+1}$, then
\[
 \pD d_e\vert_p(v,t) = \alpha_i(v).
\]
\end{proposition}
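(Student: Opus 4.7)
My plan is to identify $d_e$ with the homogeneous morphism $\alpha_i\circ\pi$ on a full open neighborhood of $p$ in $\HH$, after which the conclusion is immediate since any homogeneous group morphism is its own Pansu derivative at every point.

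First, I would verify that the \emph{wall cone}
\[
W_{i,i+1}:=\bigcup_{\lambda>0}\delta_\lambda\Panel_{i,i+1}
\]
is a three-dimensional region containing an $\HH$-neighborhood of $p$. Parametrizing $W_{i,i+1}$ by $F(\lambda,s,h):=\delta_\lambda\bigl(s\vv_i+(1-s)\vv_{i+1},\,h\bigr)$, a direct Jacobian computation shows $F$ is a local diffeomorphism at any triple $(1,s_0,h_0)$ with $s_0\in(0,1)$ and $h_0$ interior to the height range: the horizontal vectors $s_0\vv_i+(1-s_0)\vv_{i+1}$ and $\vv_i-\vv_{i+1}$ are never parallel, and the vertical direction supplies the missing rank—which is precisely the content of ``$p$ lies in the interior of $\Panel_{i,i+1}$''.

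Next, on $W_{i,i+1}$ one has $d_e=\|\pi(\cdot)\|_Q$: for $q=\delta_\lambda q'$ with $q'\in\Panel_{i,i+1}$ both sides equal $\lambda$ by homogeneity, since $\pi(q')$ lies on the edge $Q_{i,i+1}$ where $\|\cdot\|_Q\equiv 1$. Moreover, $\pi(W_{i,i+1})$ is the closed convex cone $\R_{\ge 0}\vv_i+\R_{\ge 0}\vv_{i+1}\subset\R^2$, on whose interior the polygonal norm $\|\cdot\|_Q$ restricts to the single linear functional $\alpha_i$ (by the defining property $\alpha_i(\vv_i)=\alpha_i(\vv_{i+1})=1$). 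Combining these, $d_e$ agrees with $\alpha_i\circ\pi$ on an open neighborhood of $p$ in $\HH$.

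Finally, $\alpha_i\circ\pi:\HH\to\R$ is itself a homogeneous group morphism---it is linear on the Lie algebra, annihilates $[\HH,\HH]$, and commutes with dilations---so it satisfies $(\alpha_i\circ\pi)(x)-(\alpha_i\circ\pi)(p)=(\alpha_i\circ\pi)(p^{-1}x)$ identically, making it its own strict Pansu derivative at every point. Hence $\pD d_e|_p=\alpha_i\circ\pi$, which yields $\pD d_e|_p(v,t)=\alpha_i(v)$. The only subtle ingredient is the openness of $W_{i,i+1}$ around $p$, and this is exactly what the interiority hypothesis provides.
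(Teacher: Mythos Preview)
Your proposal is correct and follows essentially the same approach as the paper: both arguments identify $d_e$ with $\alpha_i\circ\pi$ on the dilation cone of the wall panel and then read off the Pansu derivative directly. You supply more detail than the paper does---notably the Jacobian argument that the wall cone is open around $p$, which the paper simply asserts---but the underlying idea is identical.
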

\begin{proof}
Let $p = (u,t')$ be in the interior of $\Panel_{i,i+1}$, and let $q = (v,t) \in \HH$. For sufficiently small $\epsilon >0$, the point $p \delta_\epsilon q$ is inside the dilation cone of $\Panel_{i,i+1}$. In this dilation cone, $d_e = \alpha_i\circ\pi$. Thus, by definition of the Pansu derivative and the linearity of $\alpha_i$,
\[
\pD d_e|_p(q) = \lim_{\epsilon \to 0} \frac{d_e(p\delta_\epsilon q) - d_e(p)}{\epsilon} = \lim_{\epsilon \to 0} \frac{\alpha_i(u + \epsilon v) - 1}{\epsilon} = \alpha_i(v).
\]
\end{proof}

\subsection{Blow-ups of $d_e$ at non-smooth points}
We now consider blow-ups of the function $d_e$ at points on the unit sphere which are not smooth, i.e., along the seams of the sphere. 

\subsubsection{Blow-ups near north and south poles}

For each $i$, define the cones
\[
C_i^- = - [0,+\infty) \vv_{i} - [0,+\infty)\vv_{i+1} 
	= \{ \vv_i^\omega\le0\}\cap \{\vv_{i+1}^\omega\ge 0\} 
\]
in $\R^2$.
If $v\in C^-_i$, then $\|v\|=-\alpha_i(v)$.
Recall from~\cite{duchin-mooney} that the non-degenerate $Q_{ij}$ containing $(0,0)$ are $Q_{i+1,i}$ for $i=1,\dots,2N$.
For each $i$ we also define the dilation cones $U_i:=\delta_{(0,+\infty)}\Panel^+_{i+1,i}$.
Notice that
\begin{align*}
	u_{i+1,i}(r,s) 
	&= \frac{1}{\mu} (r\sigma_{i+1} + \sigma_{i+2} + \dots + \sigma_{i-1} + s \sigma_{i}) \\
	&= \frac{1}{\mu} (r\sigma_{i+1} + \sigma_{i+2} + \dots + \sigma_{i-1} + \sigma_{i} + \sigma_{i+1} + s \sigma_{i} - \sigma_{i} - \sigma_{i+1}) \\
	&= - (1-r) \frac{\|\sigma_{i+1}\|}{\mu} v_{i+1} - (1-s) \frac{\|\sigma_{i}\|}{\mu} v_{i} .
\end{align*}
Therefore,  $\pi(\Panel^+_{i+1,i}) = Q_{i+1,i} \subset C_i^-$, 
and thus $U_i\subset \pi^{-1}(C^-_i)$.

\begin{proposition}[Blow-ups at north and south poles]\label{normHorofns}
Let $p$ be the north or south pole of the unit sphere $\de B$. 
Then, all blow-ups of $d_e$ at $p$ are:

		\begin{enumerate}
		\item For $w\in\R^2$,
			\[f(v,t) = \norm{w} - \norm{w-v} ; \]
		\item For $C \in \R\cup\{-\infty,+\infty\}$ and $i \in \{1,\ldots, 2N\}$
			\[ 
			f(v,t) = \begin{cases}
				\alpha_i(v) +c_1& \omega(\vv_i,v) \leq C\\
				\alpha_{i-1}(v) + c_2 & \omega(\vv_i,v) > C
				\end{cases}
				;
			\]

		\end{enumerate}

\end{proposition}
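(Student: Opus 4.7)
The plan is to apply Lemma~\ref{lem:pansuderiv} to realise any blow-up $f$ of $d_e$ at the north pole $p^* = (0,0,h_0)$ by sequences $p_n = (w_n,z_n) \in \de B$ with $p_n \to p^*$ and $\epsilon_n \to 0^+$, so that $f$ is the locally uniform limit of $(d_e(p_n\delta_{\epsilon_n}\cdot) - 1)/\epsilon_n$; the south-pole case then follows via the involution $\flip$ of Remark~\ref{rem5f784653}. Write $w_n \to 0$ and $z_n \to h_0 > 0$. The argument splits according to the rate $|w_n|/\epsilon_n$.

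\textbf{Vertical regime.} If $|w_n|/\epsilon_n$ is bounded, pass to a subsequence so that $W := \lim(-w_n/\epsilon_n) \in \R^2$ exists and set $Q_n := \delta_{1/\epsilon_n}p_n^{-1} = (-w_n/\epsilon_n,\,-z_n/\epsilon_n^2)$: its horizontal component is bounded while its vertical component tends to $-\infty$ since $z_n \to h_0 > 0$. Theorem~\ref{vertSeq} then yields
\[
f(v,t) = \lim_{n\to\infty}\bigl(d(Q_n,(v,t)) - d(Q_n,e)\bigr) = \|W\| - \|W - v\|,
\]
which is form~(1).

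\textbf{Non-vertical regime.} If $|w_n|/\epsilon_n \to \infty$, pass to a subsequence so that $p_n$ lies inside one ceiling panel $\Panel^+_{i+1,i}$ with trace-path parameters $(r_n,s_n) \to (1,1)$. The key geometric inputs are: on each cone $U_j := \delta_{(0,\infty)}\Panel^+_{j+1,j}$ the distance $d_e$ has a smooth $1$-homogeneous extension $F_j$ with $\pD F_j|_{p^*} = \alpha_j$ (obtained from Proposition~\ref{ceilingPD} by letting $s \to 1$); the two walls of $U_i$ meeting at $p^*$ are smooth surfaces whose tangent planes at $p^*$ contain $\vv_i$ (respectively $\vv_{i+1}$) together with the centre direction, since the panel's edges $r=1$ and $s=1$ project onto the rays $-[0,\infty)\vv_i$ and $-[0,\infty)\vv_{i+1}$. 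Consequently the defining functions of these walls have Pansu derivatives at $p^*$ proportional to $v \mapsto \omega(\vv_i,v)$ and $v \mapsto \omega(\vv_{i+1},v)$. The explicit expression
\[
w_n = -\mu_n^{-1}\bigl((1-r_n)\|\sigma_{i+1}\|\vv_{i+1} + (1-s_n)\|\sigma_i\|\vv_i\bigr)
\]
translates $|w_n|/\epsilon_n \to \infty$ into the divergence of at least one of the ratios $(1-r_n)/\epsilon_n$ and $(1-s_n)/\epsilon_n$. Pass to a further subsequence so that both ratios converge in $[0,+\infty]$. By Proposition~\ref{prop5e8c49ee}, a diverging ratio produces a vacuous blow-up constraint on the corresponding wall, while a finite ratio produces a half-space $\{\omega(\vv_i,v) \le C\}$ (or the analogue with $\vv_{i+1}$) with $C \in \R$ computable from $\lim\omega(\vv_i,w_n)/\epsilon_n$. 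The same proposition applied to the adjacent cone across the surviving wall delivers the complementary half-space, while all other $U_j$ blow up to the empty set. Theorem~\ref{piecetogether}, applied to the local decomposition of $\HH$ into the cones $U_j$, then assembles the pieces: on one side of the surviving wall $f$ equals $\alpha_i(v)+c_1$ and on the other $\alpha_{i-1}(v)+c_2$ (or $\alpha_{i+1}(v)+c_2$), with $c_1,c_2$ forced by continuity of $f$ and $f(0,0)=0$. This is form~(2); the degenerate subcase where every wall constraint is vacuous gives $f = \alpha_i$ and corresponds to $C = \pm\infty$.

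\textbf{Main obstacle.} The principal obstacle is the geometric identification in the non-vertical regime: computing the Pansu derivatives of the wall defining functions as the covectors $\omega(\vv_i,\cdot)$ and $\omega(\vv_{i+1},\cdot)$ at the panel corner, and verifying via the trace-path expansion of $w_n$ that the hypothesis $|w_n|/\epsilon_n \to \infty$ forces at most one wall (hence at most two adjacent cones) to contribute a non-trivial blow-up, so that the piecewise structure predicted by Theorem~\ref{piecetogether} collapses to the at-most-two-piece form stated.
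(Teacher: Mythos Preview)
Your approach is correct and close in spirit to the paper's, but the organization differs in a way worth noting. You split according to whether $|w_n|/\epsilon_n$ stays bounded and, in the vertical regime, invoke Theorem~\ref{vertSeq} directly to obtain the norm-like functions of type~(1). The paper does not make this split: it observes once and for all that near the pole the cones $U_i$ agree with the vertical cylinders $\pi^{-1}(C_i^-)$, applies Proposition~\ref{prop5e8c49ee} to list all possible blow-ups of each $U_i$ (the empty set, all of $\HH$, a half-space, or a left-translate of $\pi^{-1}(C_i^-)$), and then lets Theorem~\ref{piecetogether} assemble these. The translated-cone case yields the full star decomposition and hence the functions $\|w\|-\|w-v\|$ without appealing to Theorem~\ref{vertSeq}; the half-space cases yield type~(2). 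Your shortcut via Theorem~\ref{vertSeq} is legitimate and makes the appearance of type~(1) more transparent, while the paper's uniform treatment is more self-contained and gives, as a by-product, a second proof of Theorem~\ref{vertSeq} in the polygonal case.

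Two small points. First, your ``main obstacle'' is easier than you suggest: once you use that $U_i$ coincides locally with $\pi^{-1}(C_i^-)$, the wall defining functions are the linear maps $v\mapsto\omega(\vv_i,v)$ and $v\mapsto\omega(\vv_{i+1},v)$ themselves, so their Pansu derivatives require no panel-parametrization computation. Second, the statement asserts that the listed functions are \emph{exactly} the set of blow-ups, so the converse direction (every listed function actually arises) should be addressed; the paper does this via Proposition~\ref{prop5e8c4f94}, and your argument contains the ingredients but does not make it explicit.
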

Proposition~\ref{normHorofns} gives a second proof of Theorem~\ref{vertSeq} in the case of polygonal sub-Finsler distances.
\begin{proof}
	Suppose $p$ is the north pole.
	A sufficiently small neighborhood $\Omega$ of $p$ is covered by the dilation cones $U_i$.
	Moreover, up to shrinking $\Omega$, we can suppose $U_i\cap \Omega = \pi^{-1}(C^-_i)\cap \Omega$.
		
	From Proposition~\ref{prop5e8c4f94}, we conclude that all blow-ups of $U_i$ at $p$ are $\HH$, $\emptyset$, left translations of $\pi^{-1}(C^-_i)$, and the half spaces $\pi^{-1}(\{ v_i^\omega\le0\})$ and $\pi^{-1}(\{v_{i+1}^\omega\ge 0\})$.
	
	Next, we see from~\eqref{u-rs}, \eqref{phi-rs} and \eqref{rs_deriv} that $(r,s)\mapsto(u_{i+1,i}(r,s),\phi_{i+1,i}(r,s))$ is well defined in a neighborhood of $(1,1)$ and the image is not tangent to $[\HH,\HH]$ at $(1,1)$.
	Notice that $u_{i+1,i}(1,1)=(0,0)$.
	It follows that the map $\psi_i:(r,s,t)\mapsto \delta_t (u_{i+1,i}(r,s),\phi_{i+1,i}(r,s))$ is a diffeomorphism near to $(1,1,1)$ and $\psi_i(1,1,1)=p$.
	Therefore, we can extend $d_e|_{U_i}$ to a homogeneous smooth function $f_i$ defined in a neighborhood of $p$ by $f_i(q) = t(\psi_i^{-1}(q))$.
	Using Proposition~\ref{ceilingPD} and the smoothness of $f_i$, we deduce that
	\[
	\pD f_i|_p(v,t) = \alpha_{i}(v) .
	\]
	
	We are now in the position to conclude the proof.
	On the one hand, if $\{p_n\}_{n\in\N}\subset\HH$ and $\{\epsilon_n\}_{n\in\N}\subset(0,+\infty)$ are sequences with $p_n\to p$ and $\epsilon_n\to 0$,
	then,  up to passing to a subsequence, we can assume that $\BU(U_i,\{p_n\}_n,\{\epsilon_n\}_n)$ exist for each $i$, by Theorem~\ref{thm5f74be88}.
	Therefore, by Theorem~\ref{piecetogether}, we obtain that $\BU((\HH,d_e),\{p_n\}_n,\{\epsilon_n\}_n)$ is one of the functions listed in the statement.
	
	On the other hand, if $f$ is one of the functions listed in the statement,
	then there are sequences $\{p_n\}_{n\in\N}\subset\HH$ and $\{\epsilon_n\}_{n\in\N}\subset(0,+\infty)$ with $p_n\to p$ and $\epsilon_n\to 0$
	so that the blow-ups $\BU(U_i,\{p_n\}_n,\{\epsilon_n\}_n)$ make the partition of $\HH$ given by $f$ and thus, by Proposition~\ref{prop5e8c4f94}, we obtain
	$(\HH,f) = \BU((\HH,d_e),\{p_n\}_n,\{\epsilon_n\}_n)$.
	
	For the south pole the proof is the same.
\end{proof}

\begin{proposition}[Blow-ups at the north star seam]
	Let $p=(u_{ii}(r,0),\phi_{ii}(r,0))$, $r\in(0,1)$, be a ceiling point above the star of $Q$ in the degenerate panel $\Panel_{ii}^+$.
	All the blow-ups of $d_e$ at $p$ are
		\[
		f(v,t) = \begin{cases}
		\alpha_{i-1}(v)+ c_1 & \omega(\vv_i,v) \geq C\\
		((1-r)\alpha_{i-1} + r\alpha_{i})(v) + c_2& \omega(\vv_i,v) < C
		\end{cases}
		\]
	for $C \in \R\cup \{-\infty,+\infty\}$.
\end{proposition}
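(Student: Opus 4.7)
The plan is to mirror the proof of Proposition~\ref{normHorofns}: identify the two three-dimensional dilation cones $U_1,U_2$ meeting at $p$, describe the two-dimensional surface separating them, apply Proposition~\ref{prop5e8c4f94} to each cone, and then paste things together via Theorem~\ref{piecetogether} using smooth extensions of $d_e|_{U_1}$ and $d_e|_{U_2}$.

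The first step is to locate the two nondegenerate ceiling panels whose closures meet $p$ along the star-arm seam, namely $\Panel^+_{i+1,i}$ and $\Panel^+_{i,i-1+2N}$. Using $\sigma_{i+1}+\cdots+\sigma_{i-1}=-\sigma_i$, direct substitution in the definition of $u_{ij}$ gives
\[
u_{ii}(r,0)=u_{i+1,i}(1,r)=u_{i,i-1+2N}(r,1)=-\tfrac{(1-r)\,\|\sigma_i\|}{L-(1-r)\,\|\sigma_i\|}\,\vv_i,
\]
where $L:=\sum_k\|\sigma_k\|$; the vertical coordinates agree because the lift of a trace path is determined by its horizontal projection and initial height. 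Set $U_1:=\delta_{(0,+\infty)}\Panel^+_{i+1,i}$ and $U_2:=\delta_{(0,+\infty)}\Panel^+_{i,i-1+2N}$; these are regular closed three-dimensional regions with disjoint interiors whose union covers a neighborhood of $p$ in $\HH$ modulo the two-dimensional dilation cone $S$ of the degenerate panel $\Panel^+_{ii}$.

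Next I would compute the Pansu derivative of a smooth defining function $F$ for $S$ near $p$. A short calculation using $\sum_{k\ne i}\sigma_k=-\sigma_i$ and the alternating property of $\omega$ shows that each of the three sums in~\eqref{phi-rs} that involves $r$ or $s$ is identically zero, so $\phi_{ii}(r,s)$ depends only on $r+s$; in particular the star arm is a one-parameter curve on the sphere and $S$ is parametrized by $\psi(\rho,t):=\delta_t(u_{ii}(\rho,0),\phi_{ii}(\rho,0))$. Both tangent vectors $\partial_\rho\psi$ and $\partial_t\psi$ at $(\rho,t)=(r,1)$ have horizontal component in $\R\vv_i$, so $T_pS=\R\vv_i\oplus\R Z$ and consequently $\pD F|_p$ is a nonzero scalar multiple of $\omega(\vv_i,\cdot)$. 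Since $Q_{i+1,i}$ lies on the side $\omega(\vv_i,\cdot)<0$ and $Q_{i,i-1+2N}$ on the side $\omega(\vv_i,\cdot)>0$ of the star arm, Proposition~\ref{prop5e8c4f94} yields
\[
\BU(U_1,\{p_n\}_n,\{\epsilon_n\}_n)=\{\omega(\vv_i,v)\le C\},\qquad \BU(U_2,\{p_n\}_n,\{\epsilon_n\}_n)=\{\omega(\vv_i,v)\ge C\}
\]
for a common $C\in\R\cup\{-\infty,+\infty\}$, and every such $C$ is realized by a suitable sequence $p_n\to p$.

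Finally I would smoothly extend $d_e|_{U_k}$ via $f_k:=t\circ\psi_{U_k}^{-1}$, where $\psi_{U_k}$ is the three-dimensional parametrization of $U_k$ by its panel coordinates together with the dilation factor $t$; this is a local diffeomorphism near $(r_1,s_1,t)=(1,r,1)$ for $k=1$ and near $(r_2,s_2,t)=(r,1,1)$ for $k=2$. Passing to the limit in Proposition~\ref{ceilingPD} as $(r_1,s_1)\to(1,r)$ and $(r_2,s_2)\to(r,1)$ gives
\[
\pD f_1|_p=(1-r)\alpha_{i-1}+r\alpha_i,\qquad \pD f_2|_p=\alpha_{i-1}.
\]
Theorem~\ref{piecetogether} then assembles the blow-up of $d_e$ at $p$ as the stated piecewise horofunction, the constants $c_1,c_2$ being pinned down by $f(0,0)=0$ and continuity across $\{\omega(\vv_i,v)=C\}$. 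The main obstacle is the Pansu-derivative computation for $F$: one must confirm regularity of $\psi$ at $(r,1)$ so that $T_pS$ is two-dimensional with one-dimensional horizontal projection $\R\vv_i$, which ensures that $\pD F|_p$ is a nonzero covector proportional to $\omega(\vv_i,\cdot)$.
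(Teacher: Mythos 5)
Your proposal is correct and follows essentially the same route as the paper: the same two dilation cones $U_{i-1}=\delta_{(0,+\infty)}\Panel^+_{i,i-1}$ and $U_i=\delta_{(0,+\infty)}\Panel^+_{i+1,i}$ meeting along the cone over the degenerate panel, the same identity $u_{ii}(r,0)=u_{i+1,i}(1,r)=u_{i,i-1}(r,1)$, blow-ups of the cones via Propositions~\ref{prop5e8c49ee}--\ref{prop5e8c4f94}, smooth extensions whose Pansu derivatives $\alpha_{i-1}$ and $(1-r)\alpha_{i-1}+r\alpha_i$ come from Proposition~\ref{ceilingPD}, and assembly by Theorem~\ref{piecetogether}, exactly as in the paper's ``argue as in Proposition~\ref{normHorofns}'' proof. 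The only (cosmetic) difference is that you obtain the separating half-spaces $\{\omega(\vv_i,\cdot)\gtrless 0\}$ from a tangent-plane computation for the seam surface, whereas the paper simply asserts that near $p$ the two cones coincide with these half-spaces; your justification that the two panel parametrizations reach the same height could be phrased more carefully (the ceiling is a graph over $Q$, or the two trace paths are cyclic reorderings with equal balayage area), but this is the same fact the paper uses without comment.
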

\begin{proof}
	Notice that $u_{i,i}(r,0) = u_{i,i-1}(r,1) = u_{i+1,i}(1,r)$, 
	hence $Q_{i,i}\subset Q_{i,i-1}\cap Q_{i+1,i}$.
	A sufficiently small neighborhood $\Omega$ of $p$ is covered by the two cones
	$U_{i-1}$ and $U_i$.
	Up to shrinking $\Omega$, we have
	$U_{i-1}\cap\Omega = \{\vv_i^\omega\ge0\}\cap\Omega$
	and $U_{i}\cap\Omega = \{\vv_i^\omega\le0\}\cap\Omega$.
	
	Thus, arguing like in the proof of Proposition~\ref{normHorofns}, 
	we can smoothly extend both $d_e|_{U_i}$ and $d_e|_{U_{i+1}}$ to $\Omega$ 
	and show that all the blow-ups of $d_e$ at $p$ are those listed in the statement.
\end{proof}

A similar analysis of points in star line segments in the basement of the unit sphere yields the following proposition.

\begin{proposition}[Blow-ups at the south star seam]
	Let $p=(u_{ii}(r,0),-\phi_{ii}(r,0))$, $r\in(0,1)$, be a basement point below the star of $Q$ in the degenerate panel $\Panel_{ii}^-$.
	All the blow-ups of $d_e$ at $p$ are
	\[
		f(v,t) =  \begin{cases}
		\alpha_{i}(v)+c_1 & \omega(\vv_i,v) \leq C\\
		((1-r)\alpha_{i} + r\alpha_{i-1})(v) + c_2  & \omega(\vv_i,v) > C
		\end{cases}.
	\]
	for $C \in \R\cup \{-\infty,+\infty\}$.
\end{proposition}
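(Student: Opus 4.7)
The plan is to mirror the proof of the preceding north star seam proposition, with basement panels replacing ceiling panels; the whole argument then assembles into an application of Theorem~\ref{piecetogether} just as in Proposition~\ref{normHorofns}.

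First, I would observe that $\Panel^-_{ii}$ is parametrized as $(r,s)\mapsto(u_{ii}(r,s),-\phi_{ii}(r,s))$, and that the point $p=(u_{ii}(r,0),-\phi_{ii}(r,0))$ with $r\in(0,1)$ lies on the seam between the two non-degenerate adjacent basement panels $\Panel^-_{i,i-1}$ and $\Panel^-_{i+1,i}$, using the same coincidence $u_{ii}(r,0)=u_{i,i-1}(r,1)=u_{i+1,i}(1,r)$ exploited in the ceiling case (and the corresponding identity for $\phi$, taking the sign into account). A small enough neighborhood $\Omega$ of $p$ in $\HH$ is then covered by the two basement dilation cones $V_{i-1}:=\delta_{(0,+\infty)}\Panel^-_{i,i-1}$ and $V_i:=\delta_{(0,+\infty)}\Panel^-_{i+1,i}$, and, up to shrinking $\Omega$, $V_{i-1}\cap\Omega$ and $V_i\cap\Omega$ are the intersections of $\Omega$ with the two closed half-spaces separated by $\{\omega(\vv_i,\cdot)=0\}$. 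Exactly as in Proposition~\ref{normHorofns}, the diffeomorphism arising from the parametrization $(r,s,t)\mapsto \delta_t(u_{i+1,i}(r,s),-\phi_{i+1,i}(r,s))$ extends $d_e|_{V_i}$ to a homogeneous smooth function on a neighborhood of $p$, and likewise for $V_{i-1}$; the basement formulas of Proposition~\ref{ceilingPD} give the Pansu derivatives of these extensions at $p$, namely $\alpha_i(v)$ on the side corresponding to $V_i$ and $((1-r)\alpha_i+r\alpha_{i-1})(v)$ on the side corresponding to $V_{i-1}$.

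With these pieces in hand, the argument closes exactly as in Proposition~\ref{normHorofns}: by Zarankiewicz (Theorem~\ref{thm5f74be88}) one may pass to subsequences so that $\BU(V_j,\{p_n\}_n,\{\epsilon_n\}_n)$ exists for $j\in\{i-1,i\}$, Proposition~\ref{prop5e8c49ee} identifies each blow-up as a half-space cut by $\omega(\vv_i,\cdot)$ at some level $C\in\R\cup\{-\infty,+\infty\}$, and Theorem~\ref{piecetogether} pieces these together to produce the claimed two-case formula (with the constants $c_1,c_2$ fixed by continuity and $f(0,0)=0$). Conversely, for each value of $C$, Proposition~\ref{prop5e8c4f94} furnishes sequences $p_n\to p$ and $\epsilon_n\to 0$ realizing the corresponding blow-up, giving every function in the list.

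The one genuine subtlety is orientation: on the basement the combinatorics of which panel borders which side of the seam are reversed relative to the ceiling, so care is needed to check that the side of $\{\omega(\vv_i,\cdot)=0\}$ on which $\alpha_i$ appears is the one opposite to the ceiling case (this is exactly what flips $\geq$/$<$ into $\leq$/$>$ in the final statement). The cleanest way to verify this, and arguably an alternative route to the whole proposition, is to push the north star result forward through the involution $\flip$ of Remark~\ref{rem5f784653}, using $\alpha^\flip_k\circ\flip=-\alpha_{-k-1}$, $\vv^\flip_k=\flip(\vv_{-k})$ and $\flip^*\omega=-\omega$, together with $\alpha_{k+N}=-\alpha_k$; the index shift sends the panel $\Panel^+_{N-i,N-i}$ of $(\HH,d^\flip)$ to $\Panel^-_{ii}$ of $(\HH,d)$ and converts $\alpha^\flip_{N-i-1}$, $\alpha^\flip_{N-i}$ into $\alpha_i$, $\alpha_{i-1}$, with the inequality $\omega(\vv^\flip_{N-i},\cdot)\geq C$ becoming $\omega(\vv_i,\cdot)\leq C$ after accounting for the sign change in $\omega$.
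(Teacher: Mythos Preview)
Your proposal is correct and is precisely the approach the paper indicates: the paper gives no proof, merely stating that ``a similar analysis of points in star line segments in the basement of the unit sphere yields the following proposition,'' and your mirroring of the north star argument (two basement dilation cones, smooth homogeneous extensions, Proposition~\ref{ceilingPD} for the Pansu derivatives, then Theorem~\ref{piecetogether} together with Propositions~\ref{prop5e8c49ee} and~\ref{prop5e8c4f94}) is exactly that analysis. Your alternative route via the involution $\flip$ is also in the spirit of Remark~\ref{rem5f784653}.

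One small clarification on your orientation remark: the basement cones $V_{i-1}$ and $V_i$ project to the \emph{same} planar cones $C^-_{i-1}$ and $C^-_i$ as the ceiling cones $U_{i-1}$ and $U_i$, so the partition into half-spaces $\{\omega(\vv_i,\cdot)\lessgtr 0\}$ is unchanged. What reverses the picture is the basement Pansu derivative formula of Proposition~\ref{ceilingPD}: it depends on the $r$-coordinate and on $\alpha_i,\alpha_{i-1}$, whereas the ceiling formula depends on the $s$-coordinate and on $\alpha_{j-1},\alpha_j$. With $\pi(p)=u_{i+1,i}(1,r)=u_{i,i-1}(r,1)$ this gives $\alpha_i$ on $V_i$ (the $\{\omega(\vv_i,\cdot)\le 0\}$ side) and $(1-r)\alpha_i+r\alpha_{i-1}$ on $V_{i-1}$, which is exactly the south star formula.
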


\begin{proposition}[Blow-ups at the tips of the star seam]
	Let $p\in\de B$ be such that $\pi(p)=u_{ii}(1,0)$.
	Then $d_e$ is Pansu differentiable at $p$.
\end{proposition}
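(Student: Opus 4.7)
My strategy is to invoke Theorem~\ref{piecetogether} at $p$ and verify that the Pansu derivatives of $d_e$ coming from all the panels meeting at $p$ coincide as a single linear functional: if so, every blow-up of $d_e$ at $p$ must itself be this linear functional, which is exactly Pansu differentiability. I will treat the case $p=(u_{ii}(1,0),\phi_{ii}(1,0))$, a ceiling point; the basement case $p=(u_{ii}(1,0),-\phi_{ii}(1,0))$ then follows from the involution $\flip$ of Remark~\ref{rem5f784653}.

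First, I would enumerate the ceiling panels whose dilation cones $U_Q:=\delta_{(0,+\infty)}\Panel_Q^+$ meet in a neighborhood of $p$. The point $p$ sits at the parametrization boundary $(r,s)=(1,0)$ of the degenerate star panel $\Panel_{ii}^+$, and the tip $\pi(p)$ is a common corner of $Q_{ii}$ and of finitely many non-degenerate quadrilaterals $Q_{jk}$. A careful reading of the Duchin--Mooney trace-path combinatorics at the tip shows that each such adjacent non-degenerate panel reaches $\pi(p)$ at a vertex of its parametrization domain $[0,1]^2$ with ending parameter $s=1$ and ending edge $\sigma_k=\sigma_{i-1}$, i.e.\ the corresponding trace path fully traces $\sigma_{i-1}$ just before returning to $\sigma_i$.

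Second, I apply Proposition~\ref{ceilingPD} on each adjacent non-degenerate panel: the Pansu derivative of $d_e$ at $p$ is
\[
\bigl((1-s)\alpha_{k-1}+s\alpha_k\bigr)\circ\pi\,\Big|_{s=1,\,k=i-1}\;=\;\alpha_{i-1}\circ\pi.
\]
On the degenerate star panel $\Panel_{ii}^+$, evaluating the same formula at $s=0,\,j=i$ likewise yields $\alpha_{i-1}\circ\pi$. Thus every panel meeting $p$ contributes the same linear functional $\alpha_{i-1}\circ\pi$.

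Finally, given sequences $p_n\to p$ and $\epsilon_n\to 0$, the sequential compactness of Kuratowski convergence (Theorem~\ref{thm5f74be88}) lets me pass to subsequences so that the blow-ups $R_Q:=\BU(U_Q,\{p_n\},\{\epsilon_n\})$ exist for every $Q$ in the list. Theorem~\ref{piecetogether} writes any blow-up of $d_e$ at $p$ as the piecewise function
\[
g(x)=\alpha_{i-1}\circ\pi(x)+\sum_{Q} c_Q\,\one_{R_Q}(x),
\]
and the continuity of $g$ combined with $g(e)=0$ forces every $c_Q$ to vanish. Consequently every blow-up equals the single linear functional $\alpha_{i-1}\circ\pi$, which is exactly the statement that $d_e$ is Pansu differentiable at $p$ with $\pD d_e|_p=\alpha_{i-1}\circ\pi$. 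The principal obstacle is the combinatorial identification in the first step: at an interior star-seam point (the preceding proposition), only the two cones $U_{i-1}$ and $U_i$ are present and they contribute the \emph{distinct} covectors $\alpha_{i-1}$ and $(1-r)\alpha_{i-1}+r\alpha_i$, which is why Pansu differentiability failed there; the delicate fact at the tip is that the additional non-degenerate panels meeting at $p$, together with the extremal boundary value $r=1$, force the second would-be covector to collapse to $\alpha_{i-1}$ so that all contributions agree.
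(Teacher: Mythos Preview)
Your approach is exactly the paper's: identify the panels whose dilation cones meet at the tip, verify via Proposition~\ref{ceilingPD} that each contributes the same covector $\alpha_{i-1}$, and conclude with Theorem~\ref{piecetogether}. The paper is equally terse, naming the third panel as $\Panel_{i+1,i-1}^{\pm}$ and saying only ``checking the $(r,s)$ coordinates of $p$ in the three panels, one sees that the three pieces of the blow-up function are all equal to $\alpha_{i-1}$.''

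Two points of care. First, your combinatorial claim that ``each adjacent non-degenerate panel reaches $\pi(p)$ with ending parameter $s=1$ and ending edge $\sigma_{i-1}$'' is not literally correct for all three pieces: the panel $\Panel_{i+1,i}^+$ reaches the tip at its own coordinates $(1,0)$, i.e.\ with $s=0$ and $j=i$, and contributes $(1-0)\alpha_{i-1}+0\cdot\alpha_i=\alpha_{i-1}$ for that reason. The three relevant panels are $\Panel_{i,i-1}^+$, $\Panel_{i+1,i}^+$, $\Panel_{i+1,i-1}^+$, and the honest check is panel-by-panel rather than via a single combinatorial rule.

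Second, your final paragraph asserts that the second star-seam covector $(1-r)\alpha_{i-1}+r\alpha_i$ ``collapses to $\alpha_{i-1}$'' at the extremal value $r=1$, but this is $\alpha_i$ at $r=1$ and $\alpha_{i-1}$ at $r=0$. Geometrically, the parametrization $r\mapsto u_{ii}(r,0)$ sends $r=1$ to the origin (the pole) and $r=0$ to the far tip of the star arm; the statement as printed contains a typo, and the tip at which Pansu differentiability holds is $\pi(p)=u_{ii}(0,0)$. Your intuition that the two covectors coalesce at the tip is right, but it happens at $r=0$.
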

\begin{proof}
	The point $p$ lies at the end of the star line segment and in the intersection of a third panel $\Panel_{i+1,i-1}^\pm$. 
	Checking the $(r,s)$ coordinates of $p$ in the three panels, one sees that the three pieces of the blow-up function are all equal to $\alpha_{i-1}$. 
	Thus the Pansu derivative of $d_2$ at $p$ exists.
\end{proof}

\subsubsection{Blow-ups along wall seams} 

For each $i$, define the cones
\[
C_i^+ = [0,+\infty) \vv_{i} + [0,+\infty)\vv_{i+1} 
	= \{ \vv_i^\omega\ge0\}\cap \{\vv_{i+1}^\omega\le 0\} 
\]
in $\R^2$.
If $v\in C_i^+$, then $\|v\|=\alpha_i(v)$.
For each $i$ we also define the dilation cones $W_i:=\delta_{(0,+\infty)}\Panel^+_{i,i+1}$, where $\Panel^+_{i,i+1}$ is the vertical wall of $\de B$ containing the edge of $Q$ between $v_i$ and $v_{i+1}$.
We recall that 
\[
u_{i,i+1}(r,s) = \frac{r\|\sigma_i\|}{r\|\sigma_i\|+s\|\sigma_{i+1}\|} v_i 
	+ \frac{s\|\sigma_{i+1}\|}{r\|\sigma_i\|+s\|\sigma_{i+1}\|} v_{i+1}
\]
is a convex combination of $v_i$ and $v_{i+1}$.

The boundary of $W_i$ is made up of a top and a bottom piece, each of which is smooth, which we denote by $\de W_i^+$ and $\de W_i^-$, respectively. 
There exists a function $\hat F: C_i^+ \to \R$ whose graph is $\de W_i^+$
Indeed, $\de W_i$ is parametrized by
	\[
	\de W_i^\pm = \{(\epsilon((1-\lambda)\vv_i +\lambda\vv_{i+1}), \pm\frac{\epsilon^2}2\omega(\vv_i,\vv_{i+1})(\lambda-\lambda^2)) : \epsilon \in (0,\infty), \lambda \in [0,1]\}.
	\]
	Using this parametrization, we solve for the height function,
	\[
	\hat F_i(v) = \frac{\omega(\vv_i,v)\omega(v, \vv_{i+1})}{2\omega(\vv_i,\vv_{i+1})}.
	\]
	Thus, $W_i = \{F_i\leq0\}$, where $F_i(v,t) = |t| - \hat F_i(v)$, which is smooth except  in the $\{t=0\}$ plane.
	Notice that
	\begin{equation}\label{eq5f7620fb}
		\pD F_i|_{(w,s)}(v,t) =
		\begin{cases}
		\frac{\omega(w,v)}{2} - \frac{ \omega(\vv_i,v)\omega(w,\vv_{i+1}) + \omega(\vv_i,w) \omega(v,\vv_{i+1}) }{ 2\omega(\vv_i,\vv_{i+1}) }
			 &\text{ if }s>0 ,\\
		- \frac{\omega(w,v)}{2} - \frac{ \omega(\vv_i,v)\omega(w,\vv_{i+1}) + \omega(\vv_i,w) \omega(v,\vv_{i+1}) }{ 2\omega(\vv_i,\vv_{i+1}) }
			 &\text{ if }s<0 .
		\end{cases}
	\end{equation}
	If $w=\lambda\vv_i+(1-\lambda)\vv_{i+1}$, then
	\begin{equation}\label{eq5f7627e9}
		\pD F_i|_{(w,s)}(v,t) =
		\begin{cases}
		(1-\lambda)\omega(\vv_{i+1},v)
			 &\text{ if }s>0 , \\
		\lambda\omega(v,\vv_i)
			 &\text{ if }s<0 .
		\end{cases}
	\end{equation}

\begin{proposition}[Blow-ups along wall seams: ceiling]
	Let $p$ be a point on $\de B$ which lies on the seam between the vertical side $\Panel_{i,i+1}$ and the ceiling 
	such that $\pi(p)=u_{i,i+1}(r,s)$ with $r,s \in (0,1]$, one of them equal to 1.
	Then all the blow-ups of $d_e$ at $p$ are 
	\[
	f(v,t) = \begin{cases}
	\alpha_i(v)+c_1 & \omega(\vv_{i+1},v) \leq C \\
	((1-s)\alpha_i + s\alpha_{i+1})(v)+c_2 & \omega(\vv_{i+1},v) > C
	\end{cases} ,
	\]
	for $C\in\R\cup\{+\infty,-\infty\}$.
\end{proposition}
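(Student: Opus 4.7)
The plan is to follow the strategy of Proposition~\ref{normHorofns} (blow-ups at the north pole) and the preceding seam propositions, adapted to a wall-ceiling seam. First I would decompose a small neighborhood $\Omega$ of $p$ into the wall cone $W_i$ and an adjacent ceiling cone $U$, which have disjoint interiors and whose union is $\Omega$. If $s=1$ with $r\in(0,1]$, the identity $u_{i,i+1}(r,1)=u_{i,i+2}(r,0)$ places $p$ on the $s'=0$ edge of $\Panel^+_{i,i+2}$, so I would take $U=\delta_{(0,\infty)}\Panel^+_{i,i+2}$; if $r=1$ with $s\in(0,1]$, the identity $u_{i,i+1}(1,s)=u_{i-1,i+1}(0,s)$ places $p$ on the $r'=0$ edge of $\Panel^+_{i-1,i+1}$, and I would take $U=\delta_{(0,\infty)}\Panel^+_{i-1,i+1}$.

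Next, as in the proof of Proposition~\ref{normHorofns}, the parametrization $(r',s',\lambda)\mapsto \delta_\lambda(u(r',s'),\phi(r',s'))$ is a local diffeomorphism near $p$ (after a smooth extension across the boundary of $[0,1]^2$), which identifies $d_e|_U$ with a smooth homogeneous function $f_U$ satisfying $\pD f_U|_p=(1-s)\alpha_i+s\alpha_{i+1}$ by Proposition~\ref{ceilingPD}; a direct check shows both subcases give the same Pansu derivative at $p$, matching the formula in the statement. Similarly, $d_e|_{W_i}$ extends smoothly to $\Omega$ with $\pD|_p=\alpha_i$ by Proposition~\ref{wallPD}.

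I would then compute the cone blow-ups with Proposition~\ref{prop5e8c49ee}. The wall defining function $F_i(v,t)=|t|-\hat F_i(v)$ is smooth near $p$ (since $p$ lies strictly above $\{t=0\}$, as $\hat F_i(\pi(p))>0$ for $\pi(p)$ in the interior of $Q_{i,i+1}$), and by \eqref{eq5f7627e9}, $\pD F_i|_p$ is a positive scalar multiple of $\omega(\vv_{i+1},\cdot)$. Hence every blow-up of $W_i$ at $p$ has the form $\{\omega(\vv_{i+1},v)\le C\}$ for some $C\in\R\cup\{\pm\infty\}$. The key geometric point---which I expect to be the main obstacle---is that the blow-up of $U$ is the opposite half-space $\{\omega(\vv_{i+1},v)\ge C\}$ with the same constant $C$. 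This follows because $W_i$ and $U$ are angular sectors with disjoint interiors whose union covers $\Omega$, so their blow-ups cover $\HH$ with disjoint interiors, and two closed half-spaces with this property must be opposite sides of a common linear hyperplane. The same hyperplane direction can also be predicted abstractly: $\alpha_{i+1}-\alpha_i$ and $\omega(\vv_{i+1},\cdot)$ are proportional on $\R^2$ since both vanish on $\vv_{i+1}$, owing to $\alpha_i(\vv_{i+1})=\alpha_{i+1}(\vv_{i+1})=1$, so the wall and ceiling Pansu derivatives of $d_e$ agree exactly on $\{\omega(\vv_{i+1},v)=0\}$, confirming the compatibility.

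To conclude, I would invoke Theorem~\ref{piecetogether} to assemble the two smooth extensions into the piecewise blow-up in the statement, with $c_1,c_2$ determined by continuity and by $f(e)=0$. For the converse---that every $C\in\R\cup\{\pm\infty\}$ is realized by some sequence $p_n\to p$---I would apply Proposition~\ref{prop5e8c4f94} to the two cones with suitably chosen approaching sequences, exactly as in the corresponding step of Proposition~\ref{normHorofns}. Once the dividing hyperplane identification in the third paragraph is in place, the rest is formal bookkeeping following the template established in the earlier seam blow-up propositions.
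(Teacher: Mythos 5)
Your proposal follows the paper's own strategy for the two generic subcases: for $r=1$, $s<1$ you use $u_{i,i+1}(1,s)=u_{i-1,i+1}(0,s)$ and decompose a neighborhood of $p$ into $W_i=\{F_i\le0\}$ and $\delta_{(0,+\infty)}\Panel^+_{i-1,i+1}=\{F_i\ge0\}$; for $r<1$, $s=1$ you use $u_{i,i+1}(r,1)=u_{i,i+2}(r,0)$; and in both cases you combine \eqref{eq5f7627e9} with Propositions~\ref{prop5e8c49ee}, \ref{prop5e8c4f94}, \ref{ceilingPD}, \ref{wallPD} and Theorem~\ref{piecetogether}, exactly as the paper does. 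Your extra remarks (positivity of the multiple of $\omega(\vv_{i+1},\cdot)$ since $\lambda\in(0,1)$, and the matching of the dividing constant on the two sides, which already follows from the fact that both half-space blow-ups are computed from the same $F_i$ along the same sequence $(p_n,\epsilon_n)$) are correct and consistent with the paper's terser treatment.

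There is, however, a gap at the corner case $r=s=1$, which the statement includes and your decomposition does not cover. At that point $\pi(p)=\tfrac{\sigma_i+\sigma_{i+1}}{\|\sigma_i\|+\|\sigma_{i+1}\|}$ is the common corner of $Q_{i-1,i+1}$, $Q_{i,i+2}$ and $Q_{i-1,i+2}$ on the edge of $Q$, so $p$ lies on the boundary of \emph{four} dilation cones: $W_i$, $\delta_{(0,+\infty)}\Panel^+_{i-1,i+1}$, $\delta_{(0,+\infty)}\Panel^+_{i,i+2}$ and $\delta_{(0,+\infty)}\Panel^+_{i-1,i+2}$. Your claim that a small neighborhood $\Omega$ of $p$ is the union of $W_i$ and a \emph{single} adjacent ceiling cone with disjoint interiors is false there, so Theorem~\ref{piecetogether} with two pieces (and Proposition~\ref{prop5e8c4f94} for the realization of every $C$) is not literally applicable as you set it up. The paper treats this as a separate third case: since the ceiling Pansu derivatives of $d_e$ on all three ceiling cones extend continuously to $p$ (they all equal $\alpha_{i+1}$ there, as one checks from Proposition~\ref{ceilingPD} at $u_{i-1,i+1}(0,1)$, $u_{i,i+2}(1,0)$ and $u_{i-1,i+2}(0,0)$), the function $d_e$ is $C^1_H$ on the union of the three ceiling cones, so the blow-up analysis effectively reduces to the same two-piece picture $\{F_i\le0\}$ versus $\{F_i\ge0\}$ and yields the same list of functions. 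You need to add this observation (or an equivalent argument) to cover $r=s=1$; with it, the rest of your argument goes through as written.
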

\begin{proof}
	We consider three cases.
	First, if $r=1$ and $s<1$, then $\pi(p)=u_{i,i+1}(1,s)= u_{i-1,i+1}(0,s) \in Q_{i-1,i+1}$.
	Therefore, a neighborhood $\Omega$ of $p$ is decomposed into two regions,
	$\Omega\cap W_i = \Omega\cap\{F_i\le0\}$ and $\Omega\cap \delta_{(0,+\infty)}\Panel^+_{i-1,i+1}=\Omega\cap\{F_i\ge0\}$.
	Therefore, Propositions~\ref{prop5e8c49ee} and~\ref{prop5e8c4f94} with~\eqref{eq5f7627e9} show what are all the blow-ups of this decomposition, 
	while Propositions~\ref{ceilingPD} and~\ref{wallPD} give us the Pansu differentials of $d_e$ near to $p$,
	so that we can conclude using Theorem~\ref{piecetogether}.
	
	Second, if $r<1$ and $s=1$, then $\pi(p)=u_{i,i+1}(r,s)= u_{i,i+2}(r,0) \in Q_{i,i+2}$.
	Then we can proceed like before.
	
	Third, when $r=s=1$, then the point $p$ lies in the boundary of four regions: 
	$W_i$, $\delta_{(0,+\infty)}\Panel^+_{i-1,i+1}$, $\delta_{(0,+\infty)}\Panel^+_{i,i+2}$ and $\delta_{(0,+\infty)}\Panel^+_{i-1,i+2}$.
	However, the function $d_e$ is $C^1_H$ on the union of the latter three and $\pD d_e$ has a continuous extension to $p$.
	It follows that the blow-ups of $d_e$ at $p$ are again the ones listed above.
	This completes the proof.
\end{proof}

A similar result holds for the basement.

\begin{proposition}[Blow-ups along wall seams: basement] 
	Let $p$ be a point on $\de B$ which lies on the seam between the vertical side $\Panel_{i,i+1}$ and the basement 
	such that $\pi(p)=u_{i,i+1}(r,s)$ with $r,s \in (0,1]$, one of them equal to 1.
	Then all the blow-ups of $d_e$ at $p$ are 
	\[
	f(v,t) = \begin{cases}
	\alpha_i(v)+c_1 & \omega(\vv_{i},v) \leq C \\
	((1-r)\alpha_i + r\alpha_{i-1})(v)+c_2 & \omega(\vv_{i},v) > C
	\end{cases} ,
	\]
	for $C\in\R\cup\{+\infty,-\infty\}$.
\end{proposition}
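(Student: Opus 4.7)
The argument is the direct analog of the preceding ceiling-case proposition, with $\Panel^+$ replaced by $\Panel^-$ everywhere and with the sign changes appropriate to being below rather than above the equator. Let $p$ lie on the seam between the vertical wall $\Panel_{i,i+1}$ and the basement, with $\pi(p) = u_{i,i+1}(r,s)$ for $r,s \in (0,1]$ and one of $r,s$ equal to $1$. In a small neighborhood $\Omega$ of $p$, the ball $B$ is locally bounded by two smooth pieces: the wall cone $W_i = \delta_{(0,+\infty)}\Panel_{i,i+1} = \{F_i \le 0\}$ and a basement dilation cone $\delta_{(0,+\infty)}\Panel^-_{j,k}$ for the unique non-degenerate adjacent basement panel. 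Using the identities $u_{i,i+1}(1,s) = u_{i-1,i+1}(0,s)$ and $u_{i,i+1}(r,1) = u_{i,i+2}(r,0)$ from the ceiling proof, the adjacent panel is $\Panel^-_{i-1,i+1}$ in the case $r=1$, $s<1$, and $\Panel^-_{i,i+2}$ in the case $s=1$, $r<1$.

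The proof proceeds in three steps. First, Propositions~\ref{prop5e8c49ee} and~\ref{prop5e8c4f94} give all possible blow-ups of $W_i = \{F_i \le 0\}$ at $p$. Since $p$ lies on the basement side (so the height parameter is negative), formula~\eqref{eq5f7627e9} yields $\pD F_i|_p$ as a positive multiple of $\omega(v,\vv_i) = -\omega(\vv_i,v)$, and the blow-ups of $W_i$ are therefore the half-spaces $\{\omega(\vv_i,v) \ge C\}$ for $C \in \R \cup \{\pm\infty\}$ (with the conventions $\HH$ and $\emptyset$ for $C = -\infty, +\infty$). Second, Proposition~\ref{wallPD} gives $\pD d_e = \alpha_i$ on $W_i$, while the basement part of Proposition~\ref{ceilingPD} gives $\pD d_e = (1-r_{\text{bas}})\alpha_j + r_{\text{bas}}\alpha_{j-1}$ on $\Panel^-_{j,k}$. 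Tracking the parametrizations at the seam identifies $r_{\text{bas}}$: in the case $r=1, s<1$ we enter $\Panel^-_{i-1,i+1}$ at $r_{\text{bas}}=0$, giving $\alpha_{i-1}$; in the case $s=1, r<1$ we enter $\Panel^-_{i,i+2}$ at $r_{\text{bas}}=r$, giving $(1-r)\alpha_i + r\alpha_{i-1}$. Both subcases are captured uniformly by $(1-r)\alpha_i + r\alpha_{i-1}$ with $r$ the wall parameter. Third, Theorem~\ref{piecetogether} assembles the blow-ups into the stated piecewise formula. The corner case $r=s=1$, where $p$ is adjacent to a third basement panel, is handled as in the ceiling proof: $d_e$ is $C^1_H$ on the union of the extra panels and $\pD d_e$ extends continuously, so the blow-ups coincide with the two-region formula.

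The main technical obstacle is the parametric identification in the middle step, that is, verifying which basement panel is adjacent and that its basement parameter agrees with the wall parameter $r$ so as to produce the exact Pansu derivative appearing in the statement. A cleaner alternative would be to deduce the result directly from the ceiling case via the involution $\flip$ of Remark~\ref{rem5f784653}, which interchanges basement and ceiling; this route, however, requires careful bookkeeping of the relations $\alpha^\flip_k \circ \flip = -\alpha_{-k-1}$, $\vv^\flip_k = \flip(\vv_{-k})$, $\flip^*\omega = -\omega$, and the fact that horofunctions transform as $f = \tilde f \circ \flip$, so the signs must be tracked through the pullback in order to recover the stated formula with the correct orientations.
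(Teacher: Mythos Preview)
Your proposal is correct and follows exactly the approach the paper intends: the paper's own ``proof'' for this proposition is only the remark that a similar result holds for the basement, and you carry out precisely the mirror of the ceiling-case argument---using the decomposition $\Omega = (W_i\cap\Omega)\cup(\text{basement cone}\cap\Omega)$, Propositions~\ref{prop5e8c49ee}, \ref{prop5e8c4f94}, \ref{ceilingPD}, \ref{wallPD}, and Theorem~\ref{piecetogether}, together with the basement branch of~\eqref{eq5f7627e9}. Your parametric identification of the adjacent basement panel and its $r$-coordinate is accurate in both subcases.

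One point worth flagging: from $\pD F_i|_p(v,t)=\lambda\,\omega(v,\vv_i)$ for negative height, the blow-ups of $W_i=\{F_i\le 0\}$ are $\{\omega(\vv_i,v)\ge C\}$, so the wall piece $\alpha_i$ lives on $\{\omega(\vv_i,v)\ge C\}$ and the basement piece $(1-r)\alpha_i+r\alpha_{i-1}$ on $\{\omega(\vv_i,v)<C\}$. This agrees with the summary in Theorem~\ref{thm5f7476ff}, item $(\not S4)(b)$, but the inequality signs are reversed relative to the proposition as stated. This is an internal inconsistency in the paper; your derivation gives the correct orientation. You claim that Theorem~\ref{piecetogether} ``assembles the blow-ups into the stated piecewise formula,'' but in fact it assembles them into the formula with the inequality flipped---you should say so rather than silently conflate the two.
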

%


\begin{proposition}[Blow-ups along wall seams: vertices]
	Let $p$ be the vertex $\vv_i$ on the unit sphere. 
	Then all the blow-ups of $d_e$ at $p$ are
	\[
	f(v,t) = \begin{cases}
	\alpha_{i}(v) + c_1 & \omega(\vv_i,v) \geq C \\
	\alpha_{i-1}(v) + c_2 & \omega(\vv_i, v) < C
	\end{cases},
	\]
	for $C\in\R\cup\{+\infty,-\infty\}$.
\end{proposition}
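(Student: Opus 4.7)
The plan is to apply Theorem~\ref{piecetogether} to the decomposition of a small neighborhood $\Omega$ of $p = (\vv_i, 0)$ into the four regular closed sets $W_i \cap \Omega$, $W_{i-1}\cap\Omega$, $\delta_{(0,+\infty)}\Panel^+_{i-1,i+1}\cap\Omega$, and $\delta_{(0,+\infty)}\Panel^-_{i-1,i+1}\cap\Omega$. These have pairwise disjoint interiors and cover $\Omega$ because $\vv_i = u_{i-1,i+1}(0,0)$ is the corner of $Q_{i-1,i+1}$ nearest the origin with $\phi_{i-1,i+1}(0,0) = 0$, so $p$ sits on the common boundary of the two walls adjacent to $\vv_i$ together with the ceiling and basement panels above and below $Q_{i-1,i+1}$. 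On each cone $d_e$ admits a smooth extension; by Propositions~\ref{wallPD} and~\ref{ceilingPD}, the Pansu derivatives of $d_e$ at $p$ are $\alpha_i$ on $W_i$ (and on the ceiling cone, via $((1-s)\alpha_i + s\alpha_{i+1})\circ\pi$ evaluated at the corner $(r,s) = (0,0)$) and $\alpha_{i-1}$ on $W_{i-1}$ (and on the basement cone, symmetrically).

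Given a sequence $(p_n, \epsilon_n)$ with $p_n \to p$ and $\epsilon_n \to 0$, I pass to a subsequence via Theorem~\ref{thm5f74be88} so that each of the four blow-up sets $R_Q$ exists. The central technical claim is that the combined blow-up regions form complementary closed half-spaces:
\[
R_{W_i}\cup R_{\text{Cei}} = \{x\in\HH : \omega(\vv_i, \pi(x)) \ge C\}, \qquad R_{W_{i-1}}\cup R_{\text{Bas}} = \{x\in\HH : \omega(\vv_i, \pi(x)) \le C\},
\]
for some $C\in\R\cup\{\pm\infty\}$ depending on the sequence. This rests on two observations: the horizontal tangent cones to $W_i$ and $W_{i-1}$ at $\vv_i$ are the opposite horizontal half-planes $\{\omega(\vv_i, v) \ge 0\}$ and $\{\omega(\vv_i, v) \le 0\}$, while $(v,t) \mapsto \omega(\vv_i, v)$ is strictly Pansu differentiable at $p$ with nonzero Pansu derivative; and each wall top coincides exactly with an edge of the ceiling panel through $\vv_i$ (via the identities $\phi_{i-1,i+1}(0, s) = \hat F_i(u_{i-1,i+1}(0, s))$ and $\phi_{i-1,i+1}(r, 0) = \hat F_{i-1}(u_{i-1,i+1}(r, 0))$, together with the analogous basement identities), so that in the blow-up $W_i$ and the ceiling cone glue along their shared boundary and together fill the half-space, and likewise on the $W_{i-1}$/basement side. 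The value of $C$ is determined by how $p_n$ approaches $p$: finite $C$ arises from horizontal shifts of $p_n$ transverse to $\vv_i$ at scale $\epsilon_n$, while $C = \pm\infty$ arises when $p_n$ lies strictly inside $W_i$ or $W_{i-1}$ at distance much larger than $\epsilon_n$.

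Theorem~\ref{piecetogether} then yields $g = \alpha_i + c_1$ on $\{\omega(\vv_i, v) \ge C\}$ and $g = \alpha_{i-1} + c_2$ on $\{\omega(\vv_i, v) < C\}$; the constants are uniquely determined by $g(e) = 0$ together with continuity across $\{\omega(\vv_i, v) = C\}$, which is consistent because $\alpha_i - \alpha_{i-1}$ is a nonzero scalar multiple of $\omega(\vv_i, \cdot)$ (both functionals vanish on $\R\vv_i$ and agree at $\vv_i$). For the converse direction, Proposition~\ref{prop5e8c4f94} supplies sequences realizing each $C\in\R\cup\{\pm\infty\}$. The hard part is the half-space claim: the subtlety is that the natural vertical defining functions $F_i^\pm(v,t) = \pm t - \hat F_i(v)$ of the walls have vanishing Pansu derivative at $p$ (the quadratic function $\hat F_i$ vanishes to second order at $\vv_i$), so the only nondegenerate linear defining function available at $p$ is $\omega(\vv_i, \cdot)$; the wall-top/ceiling-edge coincidence is precisely what lets one conclude that the remaining vertical constraints degenerate and effectively disappear in the blow-up limit.
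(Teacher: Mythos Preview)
Your approach is essentially the paper's: decompose a neighborhood of $p=(\vv_i,0)$ into the four cones $W_i$, $W_{i-1}$, and the dilation cones $P_i^\pm=\delta_{(0,+\infty)}\Panel^\pm_{i-1,i+1}$, pair them by their Pansu derivatives ($\alpha_i$ on $W_i\cup P_i^+$, $\alpha_{i-1}$ on $W_{i-1}\cup P_i^-$), show the pairs blow up to complementary half-spaces $\{\omega(\vv_i,\cdot)\gtrless C\}$, and invoke Theorem~\ref{piecetogether}. The paper makes the half-space claim concrete by direct computation rather than your wall-top/ceiling-edge coincidence argument: for $p_n\equiv p$ it writes $\Klim_{\epsilon\to0}\delta_{1/\epsilon}(p^{-1}W_i)=\{\omega(\vv_i,v)\ge0\}\cap\{t\le\hat F_i(v)\}$ and $\Klim_{\epsilon\to0}\delta_{1/\epsilon}(p^{-1}P_i^+)=\{\omega(\vv_i,v)\ge0\}\cap\{t\ge\hat F_i(v)\}$, which is exactly the phenomenon your coincidence identities are encoding.
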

\begin{proof}
	The point $p$ belongs to four regions: the two wall cones $W_i$ and $W_{i-1}$, 
	and the cones $P_i^+:=\delta_{(0,+\infty)}\Panel^+_{i-1,i+1}$ and $P_i^-:=\delta_{(0,+\infty)}\Panel^-_{i-1,i+1}$
	Using the formulas for $F_i$ written above, one readily sees that
	\begin{align*}
	\Klim_{\epsilon\to0} \delta_{1/\epsilon}((\vv_i,0)^{-1}{-1}P_i^+)
	&= \{(v,t):\omega(\vv_i,v)\ge0\} \cap \{(v,t):t-\hat F_i(v)\ge0\} , \\
	\Klim_{\epsilon\to0} \delta_{1/\epsilon}((\vv_i,0)^{-1}{-1}W_i)
	&= \{(v,t):\omega(\vv_i,v)\ge0\} \cap \{(v,t):t-\hat F_i(v)\le0\} , 
	\end{align*}
	The union of these two limit cones is the half-space $\{(v,t):\omega(\vv_i,v)\ge0\}$.
	Similarly, $P_i^-$ and $W_{i-1}$ blow-up to 	$\{(v,t):\omega(\vv_i,v)\le0\}$.

	Meanwhile, the function $d_e$ blows up to $\alpha_i$ on both in $P_i^+$ and $W_i$,
	while it blows up to $\alpha_{i-1}$ on both $P_i^-$ and $W_{i-1}$.
	We then conclude.
\end{proof}

\section{Dynamics of the action of $\HH$ on the boundary}\label{sec:dynamics}

One of the main motivations for studying the boundary of a metric space is to then examine how the group of isometries acts on the boundary. Ideally this action on the boundary is simpler than the action on the space itself, and one can hope to glean information about the space or the group through this action. In any Lie group with a left-invariant metric, the group acts isometrically on itself via left translation. In this section, we explore how $\HH$ with a polygonal sub-Finsler metric acts on its horofunction boundary and its reduced horofunction boundary, generalizing results on finitely generated nilpotent groups by Walsh and Bader-Finkelshtein \cite{walsh-orbits, bader-finkel}. Given that our polygonal sub-Finsler metrics are the asymptotic cones of the discrete word metrics, the fact that the results generalize is not overly surprising.

\subsection{Action of the group on the boundary}
Let $d$ be any left-invariant homogeneous metric on $\HH$. To understand how the group acts on the boundary, it suffices to understand how the group acts on sequences. Suppose $\{q_n\}_n$ is a sequence in $\HH$ which converges to a horofunction $f \in \de_h(\HH, d)$. By definition $f(x) = \lim_{n\to\infty} d(q_n, x) - d(q_n, e)$. For a group element $g \in \HH$, the image $g.f(x)$ is the limit of the translated sequence $\{gq_n\}_n$. We have
\begin{align*}
g.f(x) & = \lim_{n\to \infty} d(gq_n, x) - d(gq_n, e)\\
 & = \lim_{n\to\infty} d(q_n, g\inv x) - d(q_n, e) - d(q_n, g\inv) + d(q_n, e) \\
 &= f(g\inv x) - f(g\inv).
\end{align*}

In Lemma~\ref{lem:pansuderiv}, we observed how horofunctions are related to Pansu derivatives and blow-ups of the distance function at points on the unit sphere. In particular, we have shown any horofunction $f$ in the boundary can be realized as a limit
\[
f(x) = \lim_{n\to\infty} \frac{d_e(p_n\delta_{\epsilon_n}x)-d_e(p_n)}{\epsilon_n},
\]
where $p_n \to p \in \de B$ and $\epsilon_n \to 0$. The following lemma shows that $g.f$ similarly is a directional derivative of $d_e$ at the same point $p$.

\begin{lemma}\label{action}
Suppose $f \in \de_h(\HH, d)$ is a blow-up of $d_e$ at a point $p$ on the unit sphere $\de B$. Then for any $g \in \HH$, the boundary point $g.f$ is also a blow-up of $d_e$ at $p$.
\end{lemma}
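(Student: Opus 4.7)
The plan is to combine the formula $g.f(x) = f(g^{-1}x) - f(g^{-1})$ derived just before the lemma with the blow-up representation of $f$ from Lemma~\ref{lem:pansuderiv}, and then rearrange the result into a blow-up at $p$ using that $\delta_{\epsilon_n}$ is a group homomorphism.

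Concretely, write $f$ as a limit
\[
f(x) = \lim_{n\to\infty}\frac{d_e(p_n\delta_{\epsilon_n}x)-d_e(p_n)}{\epsilon_n}
\]
with $p_n\to p\in\de B$ and $\epsilon_n\to 0^+$. Substituting into $g.f(x)=f(g^{-1}x)-f(g^{-1})$, the two $d_e(p_n)$ terms cancel, giving
\[
g.f(x)=\lim_{n\to\infty}\frac{d_e(p_n\delta_{\epsilon_n}(g^{-1}x))-d_e(p_n\delta_{\epsilon_n}(g^{-1}))}{\epsilon_n}.
\]

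Now set $p_n':=p_n\delta_{\epsilon_n}(g^{-1})$. The key algebraic observation is that, since $\delta_{\epsilon_n}$ is a Lie group homomorphism,
\[
(p_n')^{-1}\,p_n\delta_{\epsilon_n}(g^{-1}x)=\delta_{\epsilon_n}(g)\,\delta_{\epsilon_n}(g^{-1}x)=\delta_{\epsilon_n}\!\bigl(g\cdot g^{-1}x\bigr)=\delta_{\epsilon_n}(x),
\]
so $p_n\delta_{\epsilon_n}(g^{-1}x)=p_n'\,\delta_{\epsilon_n}(x)$. Hence
\[
g.f(x)=\lim_{n\to\infty}\frac{d_e(p_n'\,\delta_{\epsilon_n}x)-d_e(p_n')}{\epsilon_n},
\]
which is exactly the formula \eqref{eq03240944} of Lemma~\ref{lem:pansuderiv} with $(p_n,\epsilon_n)$ replaced by $(p_n',\epsilon_n)$. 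Since $\epsilon_n\to 0$, we have $\delta_{\epsilon_n}(g^{-1})\to e$, and continuity of left multiplication in $\HH$ yields $p_n'\to p\in\de B$. Thus $g.f$ is a blow-up of $d_e$ at the very same point $p$.

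The only point that requires a brief verification is that the convergence above is locally uniform in $x$ (so that we can legitimately appeal to the second half of Lemma~\ref{lem:pansuderiv}): this follows because the functions on the left, being differences $\iota(gq_n)(x)-\iota(gq_n)(e)$ for $q_n=\delta_{1/\epsilon_n}(p_n)^{-1}$, are $1$-Lipschitz in $x$ with respect to $d$, so pointwise convergence upgrades to uniform convergence on compacta by Arzelà–Ascoli. There is no real obstacle here; the content of the lemma is the algebraic identity $p_n\delta_{\epsilon_n}(g^{-1}x)=p_n'\delta_{\epsilon_n}(x)$, which expresses the left $\HH$-action on horofunctions as a reparametrization of the approximating sequence that leaves both the base point $p$ and the scale $\epsilon_n$ unchanged.
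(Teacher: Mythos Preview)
Your proof is correct and follows essentially the same approach as the paper: both arguments show that the translated sequence corresponds to replacing $p_n$ by $p_n' = p_n\delta_{\epsilon_n}(g^{-1})$, which still converges to $p$ because $\delta_{\epsilon_n}(g^{-1})\to e$. The paper phrases this via the intermediary sequence $q_n = \delta_{1/\epsilon_n}p_n^{-1}$ and the observation $\delta_{\epsilon_n}(gq_n) = (\delta_{\epsilon_n}g)(\delta_{\epsilon_n}q_n)\to p^{-1}$, whereas you manipulate the blow-up formula directly; your version is slightly more explicit (in particular you address the locally uniform convergence), but the content is the same.
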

\begin{proof}
Let $\{p_n\}_n$, $p_n \to p$, and $\{\epsilon_n\}_n$, $\epsilon_n \to 0$, be such that
\[
f(x) = \lim_{n\to\infty} \frac{d_e(p_n\delta_{\epsilon_n}x)-d_e(p_n)}{\epsilon_n}.
\]
The corresponding sequence in $\HH$ which converges to $f$ is $\{q_n\}_n = \{\delta_{1/\epsilon_n}p_n\inv\}_n$. Since $\delta_{\epsilon_n}q_n = p_n\inv \to p\inv$, we say that $q_n$ {\em converges in direction} to $p\inv$. 
This observation implies that $f$ is a blow-up of $d_e$ along a sequence $p_n \to p$ if and only if the sequence $\{q_n\}_n$ which converges to $f$ converges in direction to $p\inv$. If we translate $\{q_n\}_n$ by an element $g\in \HH$, we observe that
\[
\delta_{\epsilon_n}(gq_n) = (\delta_{\epsilon_n}g)(\delta_{\epsilon_n}q_n) \to p\inv.
\]
Thus $gq_n$ also converges in direction to $p\inv$, and $g.f$ is the blow-up of $d_e$ at $p$ along the sequence $\{p_n\delta_{\epsilon_n}g\inv\}_n$ with the same $\{\epsilon_n\}_n$.
\end{proof}

\begin{remark}\label{trivsmooth}
We recall from Proposition~\ref{prop5e8c979c} that $d_e$ is strictly Pansu differentiable at a point $p$, then the blow-up of $d_e$ along any sequences $\{p_n\}_n$ and $\{\epsilon_n\}_n$ satisfying $p_n \to p$ and $\epsilon_n \to 0$ is equal to the Pansu derivative of $d_e$ at $p$. Along with the previous lemma, this implies that if $q_n \to f \in \de_h(\HH, d)$ and $q_n$ converges in direction to a point $p$ where the distance function $d_e$ is strictly Pansu differentiable, then $g.f = f$ for any $g \in \HH$.
\end{remark}

\subsection{Busemann functions} 
Recall that Busemann functions are points of the horofunction boundary which can be realized as limits of geodesic rays. In Corollary~\ref{busemann}, we observe that the set of Busemann functions in the boundary of a polygonal sub-Finsler metric on $\HH$ is homeomorphic to a circle. Indeed, Busemann functions come in two flavors depending on whether they are the blow-ups of vertical wall points or vertices of the unit sphere:
\[
f(q) = f(v, t) = \alpha_i(v) \quad \text{ or }\quad f(q) = f(v,t) = \begin{cases}
\alpha_{i}(v) + c_1& \omega(\vv_i,v) \geq C \\
\alpha_{i-1}(v) + c_2& \omega(\vv_i,v) < C
\end{cases},
\]
where $i \in \{1, \ldots, 2N\}$, $C \in \R$, and $c_1, c_2$ are functions of $C$, determined uniquely by the criteria $f(e) = 0$ and $f$ is continuous.

In \cite{walsh-orbits}, Walsh proves that for any finitely generated nilpotent group, there is a one-to-one correspondence between finite orbits of Busemann functions under the action of the group and facets of a polyhedron defined by the generators of the group. The following proposition generalizes this result to the real Heisenberg group for any polygonal sub-Finsler metric.

\begin{proposition}
In the boundary of a polygonal sub-Finsler metric on $\HH$, there is a one-to-one correspondence between finite orbits of Busemann functions and edges of the metric-inducing polygon $Q$.
\end{proposition}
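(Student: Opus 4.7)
The plan is to use Corollary~\ref{busemann} and the explicit classification of Busemann functions from Theorem~\ref{thm5f7476ff} to parametrize the Busemann boundary, then to compute the action directly from the formula $g.f(x)=f(g^{-1}x)-f(g^{-1})$ derived at the start of Section~\ref{sec:dynamics}. Recall the Busemann functions split into two families:
\begin{enumerate}
\item[(E)] the edge-type functions $f_i(v,t)=\alpha_i(v)$, $i\in\{1,\dots,2N\}$, coming from interior points of the wall panels $\Panel_{i,i+1}$;
\item[(V)] the vertex-type functions $f_{i,C}$ with cutoff condition $\omega(\vv_i,v)\gtrless C$, for $i\in\{1,\dots,2N\}$ and $C\in\R$, with constants $c_1,c_2$ uniquely determined by continuity at the seam $\{\omega(\vv_i,\cdot)=C\}$ and the normalization $f_{i,C}(e)=0$.
\end{enumerate}
The limit values $C=\pm\infty$ in family (V) collapse to $\alpha_{i-1}$ and $\alpha_i$, which already appear in family (E), so I restrict to $C\in\R$ in (V).

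For $g=(w,\tau)\in\HH$, note that $\pi(g^{-1}x)=v-w$ where $x=(v,t)$, since $\pi$ is a group morphism. For family (E), linearity of $\alpha_i$ gives
\[
g.f_i(v,t)=\alpha_i(v-w)-\alpha_i(-w)=\alpha_i(v)=f_i(v,t),
\]
so each $f_i$ is a fixed point of the $\HH$-action. This yields $2N$ singleton (hence finite) orbits, in canonical bijection with the edges of $Q$. For family (V), the cutoff condition $\omega(\vv_i,v)\ge C$ transforms under $v\mapsto v-w$ into $\omega(\vv_i,v)\ge C+\omega(\vv_i,w)$, while the subtraction by $f_{i,C}(g^{-1})$ shifts both pieces by a single constant, preserving continuity at the new seam and enforcing $(g.f_{i,C})(e)=0$. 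Thus $g.f_{i,C}=f_{i,C+\omega(\vv_i,w)}$. Since $\vv_i\neq0$, the linear map $w\mapsto \omega(\vv_i,w)$ is surjective onto $\R$, so the orbit of $f_{i,C}$ is $\{f_{i,C'}:C'\in\R\}$, an infinite orbit homeomorphic to $\R$.

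The main technical step — the only one requiring care — is the bookkeeping of the constants $c_1,c_2$ in family (V) under the translation, since the normalization of $f_{i,C}$ is implicit. The key observation is that $-f_{i,C}(g^{-1})$ is a single additive constant, which cannot change the \emph{difference} between the two pieces of $f_{i,C}\circ g^{-1}$; this difference is determined precisely by continuity at the new seam, so $g.f_{i,C}$ must coincide with $f_{i,C+\omega(\vv_i,w)}$ by uniqueness of the normalization. Once this is checked, the conclusion is immediate: the finite Busemann orbits are exactly the $2N$ fixed points $\{f_i\}$, which are in one-to-one correspondence with the edges of $Q$, while the orbits coming from (V) are all infinite.
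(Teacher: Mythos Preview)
Your proof is correct and follows essentially the same approach as the paper's own proof: both split the Busemann circle into the edge-type linear functions $\alpha_i$ (shown to be fixed by linearity) and the vertex-type piecewise-linear families $\mathcal F_i$ (shown to have nontrivial orbits by tracking the shift of the cutoff parameter $C$ under translation). Your computation $g.f_{i,C}=f_{i,\,C+\omega(\vv_i,w)}$ is in fact more explicit than the paper's, which simply asserts ``a direct calculation shows $g.f\in\mathcal F_i$ but $g.f\neq f$''; your version also directly exhibits each vertex-type orbit as the full line $\{f_{i,C'}:C'\in\R\}$, whereas the paper implicitly relies on connectedness of $\HH$ to pass from ``not a fixed point'' to ``not a finite orbit.''
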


\begin{proof}
By Remark~\ref{trivsmooth} and also by direct calculation, the action of the group on horofunctions of the form $(v,t) \mapsto \alpha_i(v)$ is trivial. Since the $\alpha_i$ are the blow-ups of $d_e$ on vertical walls of the unit sphere, we get a correspondence between the facets of $Q$ and finite orbits of the action.

It remains to show that no other Busemann functions are fixed globally by the action of the group. For each vertex $\vv_i$ we have a family of blow-ups, in this case Busemann functions,
\[
\mathcal{F}_i = \left\{f(v, t) = \begin{cases}
\alpha_{i}(v) + c_1& \omega(\vv_i,v) \geq C \\
\alpha_{i-1}(v) + c_2& \omega(\vv_i,v) < C
\end{cases} : C \in \R\right\}.
\]
A direct calculation shows that if $g = (w, s)$, $f \in \mathcal{F}_i$, and $\omega(\vv_i, w) \neq 0$, then $g.f \in \mathcal{F}_i$, but $g.f \neq f$.
\end{proof}

\subsection{Trivial action on reduced horofunction boundary}
When defining the horofunction boundary of a metric space, we defined the maps $\iota:X\into\Co(X)$ and $\hat\iota:X\into\Co(X)/\R$. To define the {\em reduced horofunction boundary} we consider the image of $\de_h(X,d)$ in $\Co(X)/\Co_b(X)$, where $\Co_b(X)$ is the space of all continuous bounded functions. It is worth noting that the reduced horofunction is not necessarily Hausdorff, but as we show below, it has value in its strong relationship with the action of the group on $\de_h(X,d)$.

In \cite{bader-finkel}, Bader--Finkelshtein show that the for any finitely generated abelian group and discrete Heisenberg group with any finite generating set, the action of the group on its reduced horofunction boundary is trivial. They further conjecture that this result should hold for any finitely generated nilpotent group. We are able to extend this result to the real Heisenberg group with a polygonal sub-Finsler metric.

\begin{proposition}
Let $d$ be a polygonal sub-Finsler metric on $\HH$. Then the reduced horofunction boundary is in bijection with the quotient of $\de_h(\HH,d)$ by the action of the group. That is
\[
\de_h^r(\HH, d) \leftrightarrow \de_h(\HH,d)/\HH,
\]
and so $\HH$ acts trivially on its reduced horofunction boundary.
\end{proposition}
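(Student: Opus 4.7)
The plan is to exploit the explicit catalogue of horofunctions in Theorem~\ref{thm:main} together with the vertical invariance from Proposition~\ref{prop5f749836}. Since $\HH$ is strongly bracket-generating, every $f \in \de_h(\HH,d)$ factors as $f(v,t) = \hat f(v)$ for some $\hat f$ on $\R^2$. Writing $g = (w,s)\in\HH$ so that $g^{-1}x$ has horizontal part $v-w$, the formula $g.f(x) = f(g^{-1}x) - f(g^{-1})$ derived at the start of this section becomes
\[
(f - g.f)(v,t) = \hat f(v) - \hat f(v-w) + \hat f(-w).
\]
So the task is to bound $v \mapsto \hat f(v) - \hat f(v-w)$ uniformly on $\R^2$ for every $w$.

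The first main step is to show this boundedness, which gives a well-defined surjection $\de_h(\HH,d)/\HH \to \de_h^r(\HH,d)$ witnessing that the induced action is trivial. By Theorem~\ref{thm:main}, every $\hat f$ is either of the form $\|w_0\| - \|v - w_0\|$ for some $w_0\in\R^2$ (the disk $K(\R^2)$) or a max or min of at most two affine functions of the form $\alpha(v)+c$ with $\alpha \in (\R^2)^*$. In the first case the reverse triangle inequality gives $|\hat f(v) - \hat f(v-w)| \leq \|w\|$. In the second, since $\alpha(v) - \alpha(v-w) = \alpha(w)$ is a constant in $v$ and $\max,\min$ are $1$-Lipschitz componentwise, the difference is bounded by the maximum of $|\alpha(w)|$ over the affine pieces appearing.

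The second step is injectivity of the map $\de_h(\HH,d)/\HH \to \de_h^r(\HH,d)$, i.e., distinct orbits must give distinct reduced classes. A direct computation shows that the $\HH$-action sends $f_0(v) = -\|v\|$ onto every $f_w(v) = \|w\| - \|v - w\|$, so the disk $K(\R^2)$ is a single orbit; by the triangle estimate above its elements have pairwise bounded differences and it collapses to one reduced class. For the two-piece functions in the image of $\psi_i^\vee,\psi_i^\wedge,\xi_i^\vee,\xi_i^\wedge$, varying $w$ translates the discontinuity line $\{\omega(\vv_i,\cdot) = C\}$ across $\R^2$ (since $(\beta - \alpha)(w)$ attains every real value) while leaving the pair of affine pieces invariant; thus an orbit is parametrized by $a\in [-\infty,\infty]$ with $(i,s)$ and the family label fixed. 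Two horofunctions with distinct pairs of affine pieces must differ by an unbounded amount, since on some asymptotic cone in $\R^2$ they grow like distinct linear functions. In the degenerate limits ($a = \pm\infty$ or $s=0$) the horofunction reduces to a single linear piece $\alpha_i$, which is fixed by the action (Remark~\ref{trivsmooth}), and any two distinct such pieces differ by an unbounded linear function.

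The main obstacle is the bookkeeping in this last step: one must carefully match orbits to reduced classes across the four families $\psi_i^\vee,\psi_i^\wedge,\xi_i^\vee,\xi_i^\wedge$ and their degenerations, respecting the sphere-gluings along meridians described in Theorem~\ref{thm:main} (including the shared meridians with the disk $K(\R^2)$), and verify that the ``asymptotic affine-piece data'' is both a complete orbit invariant and preserved under the equivalence relation of bounded difference. Given Theorem~\ref{thm:main} this reduces to finite combinatorics, but care is needed to avoid double-counting boundary cases.
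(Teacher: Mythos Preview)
Your proposal is correct and follows essentially the same route as the paper's proof: both exploit the explicit catalogue of horofunctions (the disk $K(\R^2)$ of norm-like functions and the two-piece affine functions in the images of $\psi_i^\vee,\psi_i^\wedge,\xi_i^\vee,\xi_i^\wedge$), show via the triangle inequality and componentwise Lipschitz bounds that functions in the same orbit differ boundedly, and then check case by case that functions with distinct affine-piece data have unbounded difference. Your organization---first proving the quotient map is well defined, then injective---is slightly cleaner than the paper's direct case analysis, but the content is the same, including the specific translation $g=(w_2-w_1,0)$ for the disk and the choice of $g$ with $\omega(\vv_i,\pi(g))=C_1-C_2$ for the two-piece families.
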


\begin{proof}
To prove this proposition, it will suffice to look at each of the families of functions described in the Theorem~\ref{thm5f7476ff}.

We start by considering the three smooth families of horofunctions, which compose a circle in the boundary. These boundary points are all Pansu derivatives, and hence are linear. It is clear that two linear functions stay bounded distance from one another if and only if they are identical, and so each Pansu derivative remains distinct in the reduced horofunction boundary.  By the definition of action on the boundary, it is clear that if $f$ is linear, then $g.f = f$ for all $g \in \HH$, and so the action on these points in $\de_h^r(\HH,d)$ is trivial.

Next we consider the piecewise-linear horofunctions coming from the blow-ups of non-smooth points. Any (nontrivially) piecewise linear function cannot have bounded difference from a linear function, and so they cannot be equivalent in the reduced horofunction boundary to the smooth families mentioned above. Our goal is to show that two horofunctions $f_1$ and $f_2$ differ by a bounded function if and only if $f_1$ and $f_2$ belong to the same orbit.
Let $f_1$ and $f_2$ be distinct functions coming from the same family of functions in Theorem~\ref{thm5f7476ff}.

\underline{Case 1:} Suppose that for $j=1,2$, we have $f_j = \norm{w_j} - \norm{w_j-v}$, $w_1 \neq w_2$. Then
\begin{align*}
|f_2(v,t) - f_1(v,t)| &= \mathlarger|\norm{w_2} - \norm{w_2 - v} -(\norm{w_1} - \norm{w_1 - v})\mathlarger|\\
	& \leq\mathlarger|\norm{w_2} - \norm{w_1}\mathlarger| + \mathlarger| \norm{w_1 - v}- \norm{w_2 - v}\mathlarger|\\
	& \leq 2\norm{w_2 -w_1},
\end{align*}
and so $f_1$ and $f_2$ are identified in the reduced boundary. It remains to show that they lie in the same orbit. Let $g = (w_2 -w_1, 0)$. Then
\begin{align*}
g.f_1(v,t) &= \norm{w_1} - \norm{w_1 - (v - (w_2 - w_1))} - (\norm{w_1} - \norm{w_1 - (w_2 - w_1)})\\
	& = \norm{w_2} - \norm{w_2 - v} = f_2(v,t).
\end{align*}
This calculation also shows that for any $g \in \HH$, $g.f_1$ will lie in this same family of blow-ups.

\underline{Case 2:} The remaining families of functions 
are the images of $(0,1] \times \R$ under the maps $\psi_i^\vee$, $\psi_i^\wedge$, $\xi_i^\vee$, and $\xi_i^\wedge$, $i \in \{1,\ldots,2N\}$. It is clear that no function from these families can have bounded difference with a function from Case 1. Indeed, the norm-like functions of Case 1 are piecewise linear, where $2N$ distinct functions are defined on $2N$ regions, for $N>1$. Meanwhile, the images of $\psi_i^\vee$, $\psi_i^\wedge$, $\xi_i^\vee$, and $\xi_i^\wedge$ are defined by two functions defined on two halfspaces. There is, therefore, an unbounded region in the plane where the two functions have unbounded difference.

Consider two functions $f_1$ and $f_2$ from these families, where $f_j$, $j=1,2,$ is the image of $(s_j, a_j)$,  $s_j\in(0,1]$, $a_j \in \R$, under a map in $\{\psi_{i_j}^\vee, \psi_{i_j}^\wedge, \xi_{i_j}^\vee, \xi_{i_j}^\wedge\}$, for index $i_j \in \{1, \ldots, 2N\}$. We omit the cases where $s_j = 0$ or $a_j \in \{-\infty,\infty\}$, as these cases result in linear functions, already discussed above. We claim that $f_1$ and $f_2$ have bounded difference if and only 1) $s_1 = s_2$; 2) $i_1 = i_2$; and 3) they both are the image under the same map. Indeed, if any of these three conditions is not met, a direct inspection of the functions $\psi_i^\vee$, $\psi_i^\wedge$, $\xi_i^\vee$, and $\xi_i^\wedge$ makes clear that there is an unbounded region on which $f_1$ and $f_2$ are defined as distinct linear functions and have unbounded difference. On the other hand, if the three conditions are met, we must show that $f_1$ and $f_2$ have bounded difference. For $j=1,2$, let $f_j$ be linear on the two regions $U_j= \{(v,t): \omega(\vv_i,v) \geq C_j\}$ and $L_j= \{(v,t): \omega(\vv_i,v) \geq C_j\}$. To analyze the difference between $f_1$ and $f_2$, we assume $C_1 > C_2$ and consider three regions:
\begin{align*}
\Omega_1 &= U_1\cap U_2 =  \{\omega(\vv_{i},v) \geq C_1\} && \Omega_3 = L_1 \cap L_2 = \{\omega(\vv_{i},v) \leq C_2\}\\
 \Omega_2 &=  L_1 \cap U_2 = \{C_2 < \omega(\vv_{i},v) < C_1\}
\end{align*}
Since $s_1 = s_2$, $i_1 = i_2$, and $f_1$, $f_2$ are both images under the same map, $(f_1 - f_2)|_{\omega_1}$ and $(f_1 - f_2)|_{\omega_3}$ are constant. Meanwhile $f_1$ and $f_2$ are distinct on the unbounded strip $\Omega_2$. Since $f_1$ and $f_2$ are continuous, the level sets of the $f_j$ in $\Omega_2$ must be transverse (not parallel) to the strip $\Omega_2$, guaranteeing that the functions have bounded difference.

Finally, by choosing an element $g \in \HH$ such that $\omega(\vv_i, \pi(g)) = C_1-C_2$, one can confirm that $f_1$ and $f_2$ are in the same orbit.
\end{proof}

\begin{figure}[H]
\begin{tikzpicture}[scale=0.53, every node/.style={scale=0.8}]

\draw[very thick, black!40!green] (-3,3) to[bend left = 20] (0,3);
\draw[very thick, orange] (-3,3) to[bend left = 40] (0,3);
\draw[very thick, white!40!cyan] (-3,3) to[bend left = 60] (0,3);

\draw[very thick, black!40!green] (3,-3) to[bend left = 20] (0,-3);
\draw[very thick, orange] (3,-3) to[bend left = 40] (0,-3);
\draw[very thick, white!40!cyan] (3,-3) to[bend left = 60] (0,-3);

\draw[very thick, black!40!green] (3,0) to[bend left = 20] (3,-3);
\draw[very thick, orange] (3,0) to[bend left = 40] (3,-3);
\draw[very thick, white!40!cyan] (3,0) to[bend left = 60] (3,-3);

\draw[very thick, black!40!green] (-3,0) to[bend left = 20] (-3,3);
\draw[very thick, orange] (-3,0) to[bend left = 40] (-3,3);
\draw[very thick, white!40!cyan] (-3,0) to[bend left = 60] (-3,3);

\draw[very thick, black!40!green] (-3,0) to[bend left = 20] (-3,3);
\draw[very thick, orange] (-3,0) to[bend left = 40] (-3,3);
\draw[very thick, white!40!cyan] (-3,0) to[bend left = 60] (-3,3);

\draw[very thick, black!40!green] (0,-3) to[bend left = 20] (-3,0);
\draw[very thick, orange] (0,-3) to[bend left = 40] (-3,0);
\draw[very thick, white!40!cyan] (0,-3) to[bend left = 60] (-3,0);

\draw[very thick, black!40!green] (0,3) to[bend left = 20] (3,0);
\draw[very thick, orange] (0,3) to[bend left = 40] (3,0);
\draw[very thick, white!40!cyan] (0,3) to[bend left = 60] (3,0);

\fill[violet] (3,-1.5) circle (4 pt);
\fill[violet] (-1.5,3) circle (4 pt);
\fill[violet] (1.5,1.5) circle (4 pt); 
\fill[violet] (-3,1.5) circle (4 pt);
\fill[violet] (-1.5,-1.5) circle (4 pt);
\fill[violet] (1.5, -3) circle (4 pt);
\fill[violet] (0,0) circle (4 pt);

\fill[blue] (3,0) circle (4 pt);
\fill[blue] (0,3) circle (4 pt);
\fill[blue] (-3,3) circle (4 pt); 
\fill[blue] (-3,0) circle (4 pt);
\fill[blue] (0,-3) circle (4 pt);
\fill[blue] (3, -3) circle (4 pt);

\fill[red] (4,-1.5) circle (4 pt);
\fill[red] (-1.5,4) circle (4 pt);
\fill[red] (2.6,2.6) circle (4 pt); 
\fill[red] (-4,1.5) circle (4 pt);
\fill[red] (-2.6,-2.6) circle (4 pt);
\fill[red] (1.5, -4) circle (4 pt);

\begin{scope}[xshift=-11cm]
\node at (0,0){\includegraphics[width=6cm]{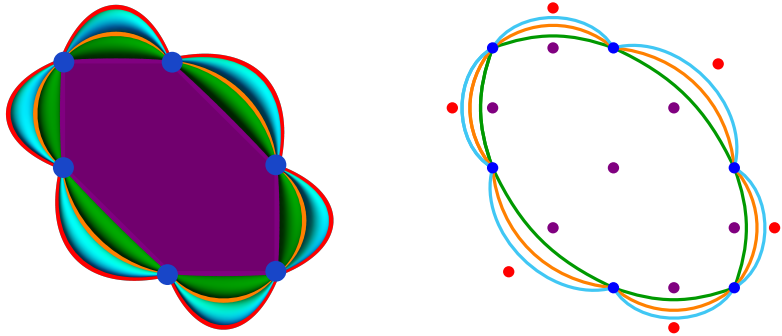}};
\end{scope}
\end{tikzpicture}
\caption{The standard and reduced horofunction boundaries for a hexagonal sub-Finsler metric.}
\label{reduced}
\end{figure}


\bibliography{BIB-Horoboundary.bib}{}
\bibliographystyle{plain}

\end{document}